\newtheorem{theorem}{Theorem}
\newtheorem{proposition}{Proposition}
\newtheorem{lemma}{Lemma}
\newtheorem{remark}{Remark}
\newcommand{\Cbb}{{\mathbb{C}}}
\newcommand{\Ebb}{{\mathbb{E}}}
\newcommand{\Nbb}{{\mathbb{N}}}
\newcommand{\Pbb}{{\mathbb{P}}}
\newcommand{\Qbb}{{\mathbb{Q}}}
\newcommand{\Rbb}{{\mathbb{R}}}
\newcommand{\Zbb}{{\mathbb{Z}}}
\newcommand{\Fcal}{{\mathcal{F}}}
\newcommand{\Hcal}{{\mathcal{H}}}
\newcommand{\Kcal}{{\mathcal{K}}}
\newcommand{\Lcal}{{\mathcal{L}}}
\newcommand{\Mcal}{{\mathcal{M}}}
\newcommand{\Ocal}{{\mathcal{O}}}
\newcommand{\nuest}{{\nu_\text{est}}}
\newcommand{\Acal}{{\mathcal{A}}}
\newcommand{\Bcal}{{\mathcal{B}}}
\newcommand{\Pcal}{{\mathcal{P}}}
\newcommand{\Qcal}{{\mathcal{Q}}}
\newcommand{\Zcal}{{\mathcal{Z}}}
\DeclareMathOperator{\argmax}{{\text{arg max}}}
\DeclareMathOperator{\argmin}{{\text{arg min}}}
\DeclareMathOperator{\Diam}{{\text{Diam}}}
\DeclareMathOperator{\Int}{{\text{relint}}}
\DeclareMathOperator{\Jac}{{\text{Jac}}}
\renewcommand{\geq}{\geqslant}
\renewcommand{\leq}{\leqslant}
\title{Support and distribution inference from noisy data}
\author{J\'er\'emie Capitao-Miniconi$^*$, \'Elisabeth Gassiat$^*$ and Luc Leh\'ericy$^{**}$\\
{\small $^*$ Universit\'e Paris-Saclay, CNRS, Laboratoire de math\'ematiques d'Orsay, 91405, Orsay, France,}\\
{\small $^{**}$ Universit\'e Côte d’Azur, CNRS, LJAD, France.}}
\date{}
\begin{document}
\maketitle

\begin{abstract}
Understanding what kind of noise in the observations allow to recover low dimensional structures of a signal is of interest in statistical learning, as a first step to build efficient dimension reduction procedures. In this work we give a new contribution on the type of noise which can affect the data without preventing to build consistent estimators of the support and distribution of the signal. We focus on the situation where the observations are corrupted with additive and independent noise. We prove that for general classes of supports, it is possible to recover both the support and the distribution of the signal without   knowing the noise distribution and with no sample of the noise. We exhibit classes of distributions over which we prove adaptive minimax rates (up to a $\log \log$ factor) for the estimation of the support in Hausdorff distance. Moreover, for the class of distributions with compact support, we provide estimators of the unknown (in general singular) distribution and prove maximum rates in Wasserstein distance. We also prove an almost matching lower bound on the associated minimax risk.
\end{abstract}

\section{Introduction}
\label{sec:intro}

\subsection{Context and aim}

It is a common observation that high dimensional data has a low intrinsic dimension. 
The computational geometry point of view gave rise to a number of interesting algorithms (see~\cite{MR3837127} and references therein) for the reconstruction of a non linear shape from a point cloud, 
and in the statistical community, past years have seen increasing interest for manifold estimation. The case of non noisy data, that is when  the observations are sampled on the unknown manifold,  is by now relatively well understood. When  the loss is measured using the Hausdorff distance, minimax rates for manifold estimation are known and have been proved  recently.
The rates depend on the intrinsic dimension of the manifold and  differ when the manifold has a boundary or does not have a boundary, due to the particular way  points accumulate near boundaries (see~\cite{AaronBoundary} for the most recent results, together with an overview of the subject and references).
When considering the estimation of a distribution with unknown non linear low dimensional support, one has to choose a loss function. 
The Wasserstein distance allows to compare distributions that can be mutually singular, and is thus useful to compare distributions having possibly different supports. 
Moreover, approximating an unknown probability distribution $\mu$ by a good estimator $\hat{\mu}$ with respect to the Wasserstein metric allows to infer the topology of the support of $\mu$, see~\cite{MR2859954}. When using non noisy data, one can look at~\cite{MR4421180} and~\cite{MR4441130} for the most recent results and for an overview of the references.

However, despite these fruitful developments, geometric inference from noisy data remains a theoretical  and practical widely open problem.
In this paper, we are interested in the estimation of possibly low dimensional supports, and of distributions supported on such supports, when the observations are corrupted with {\it unknown} noise. We aim at giving a new contribution on the type of noise which can affect the data without preventing to build consistent estimators of the support and of the law of the noisy signal.

\subsection{Previous works: estimation of the support with noisy data}

Some of the geometric ideas that have been developed to handle non noisy data can be applied, or adapted, to handle noisy data and build estimators with controlled risk. These works generally consider a noise that is normal to the unknown manifold, in which case the amplitude of the noise has to be bounded by the reach of the manifold (the reach is some regularity parameter of a manifold, see~\cite{Federer59} for a precise definition). 
The upper bound on the risk contains a term depending on the amplitude of noise. Thus, the upper bound on the estimation risk is meaningful only when the bound on the noise is small, and the estimator is consistent when the noise tends to $0$ with the amount of data tending to infinity. See~\cite{MR3909931}, \cite{AaronBoundary}, \cite{AFischer}, \cite{MR4002890}, see also~\cite{MR4420765} in which the noise can be non orthogonal to the manifold. In~\cite{AizenbudSober21}, the noise is not normal to the manifold but the data is 
uniformly sampled on a tubular neighborhood of the unknown manifold, which allows to take advantage of the fact that the manifold lies in the middle of the observations. The magnitude of the noise also has to be upper bounded by the reach.
When the noise is not assumed very small, results are known in the  specific setting 
of clutter noise, see~\cite{GPVW12}, that is the situation where a proportion of data is uniformly sampled from a known compact set, and the remaining data is noiseless. The authors propose a
clever idea to remove noise by comparing the way the empirical data concentrate near any regular shape, and they find a consistent estimator with upper bounded risk.

When we accept to consider noise with known distribution, a popular model for noisy data is the deconvolution model, in which the low dimensional data are corrupted with independent additive noise.
In such models, 
all estimation procedures are roughly based on the fact that it is possible to get an estimator of the characteristic function of the 
non noisy data by dividing an estimator of the characteristic function of the noisy data by that (known) of the noise. 
In the deconvolution setting, the authors of
\cite{GPVW12} consider data corrupted with Gaussian noise, and propose as estimator of the manifold an upper level set of an estimator of a  kernel smoothing density of the unknown distribution. With the truncated Hausdorff loss, the authors prove that their estimator achieves a maximum risk (over some class of distributions) upper bounded by $(\sqrt{\log n})^{-1+\delta}$ for any positive $\delta$, and prove a lower bound of order $(\log n)^{-1+\delta}$ for the minimax risk.
Taking an upper level set of an estimated density had been earlier proposed to estimate a support based on non noisy data in~\cite{MR1604449}.
In the context of full dimensional convex support and with additive Gaussian noise, \cite{MR4255215} proposes an estimation procedure using convexity ideas. The authors prove an upper bound of order $\log \log n / \sqrt{\log n}$ and a lower bound of order $(\log n)^{-2/\tau}$
for the minimax Hausdorff risk, for any $\tau\in(0,1)$. Earlier work with known noise and with full dimensional support is
\cite{MR2298884}, where the author first builds an estimator of the unknown density using deconvolution ideas, then samples from this estimated density and 
takes a union of balls centered on the sampled points, such as in~\cite{MR579432}.


\subsection{Previous works: estimation of the distribution with noisy data}
 
The case of unknown but small (and orthogonal to the unknown manifold) noise is handled
in~\cite{MR4421180}, the author proposes a kernel estimator and proves that it is minimax. The rate depends on the upper bound of the noise. Non parametric Bayesian methods have been explored  in~\cite{berenfeld2022estimating} for observations on a tubular neighborhood of the unknown manifold, that is again for bounded noise.

In the deconvolution problem, with known Gaussian noise, the authors of 
\cite{MR3189324} prove matching upper and lower bounds  for the minimax risk of the estimation of the unknown distribution using the Wasserstein distance. Results for other known noises, but limited to one dimensional observations, can be found in~\cite{MR3314482}.



\subsection{Contribution and main results}

In this work, we focus on the situation where the observations are corrupted with additive and independent noise. 
It has been proved recently~\cite{LCGL2021} that, under very mild assumptions, it is possible to solve the deconvolution problem  without knowing the noise distribution and with no sample of the noise. 
In that work, the authors consider the density estimation problem. Here, we are faced with the more general situation where the underlying non noisy data may have a distribution with a possibly lower dimensional support than the ambient space, thus having no density with respect to Lebesgue measure. The intuition behind our work is that certain geometrical properties of the signal support induce sufficient structure for the deconvolution problem to be solvable without knowledge of the noise distribution, and that in such situations the identifiability theory of~\cite{LCGL2021} applies. 

Our main contributions are as follows.
\begin{itemize}
    
    \item We are able to propose estimators (based on observations corrupted with totally unknown noise) achieving adaptive minimax rates (up to a $\log \log$ factor) for the estimation of the support in Hausdorff distance. The minimax rates are investigated over well chosen classes of distributions, see Theorem~\ref{theorem:rateH}, Theorem~\ref{theo:adapt} and Theorem~\ref{thm:lower}. 
    Specifically, the minimax risk for the Hausdorff distance is upper bounded by $(\log \log n)^{L}/(\log n)^{\kappa}$ for some $L$, where $\kappa\in(1/2,1]$ is a parameter depending on the tail of the distribution of the signal ($\kappa=1$ corresponds to compactly supported distributions, and $\kappa = 1/2$ to sub-Gaussian distributions), while the minimax risk is lower bounded by $1/(\log n)^{\kappa}$ if $\kappa\in(1/2,1)$ and $1/(\log n)^{1-\delta}$ if $\kappa=1$, $\delta$ being any (small) positive number. Adaptation is with respect to $\kappa$. In some sense, exhibiting these classes of distributions allows to fill the gap between the upper and lower bounds in \cite{GPVW12}, together with the extension to totally unknown noise.
    
    \item We consider the estimation of the unknown (in general singular) distribution of the hidden non noisy data itself when it has a compact support. We prove almost matching upper and lower bounds of order $1/(\log n)$ for the estimation risk of the distribution in Wasserstein distance, see Theorem~\ref{theorem:upperboundwasserstein} and Theorem~\ref{theorem:lowerboundwasserstein}.
    
     \item 
    We give a precise outline to the intuitions of structure which make it possible to get our estimation results.
     We 
     exhibit simple geometric properties of a support so that, whatever the distribution on such an (unknown) support (provided it does not have too heavy tails), the deconvolution problem can be solved without any knowledge regarding the noise, see Theorem~\ref{prop:idH} and Theorem~\ref{prop:idgeneral}. 
     
\end{itemize}
Although we exhibit estimators, let us insist on the fact that our goal is mainly theoretical. We do not pretend to propose easy to compute estimation procedures, but to give precise answers about minimax adaptive rates for support and distribution estimation with noisy data in a very general deconvolution setting, where the noise is unknown and can have any distribution.

\subsection{Organisation of the paper}


In Section~\ref{sec:ident} we precise the setting and provide a general overview of the estimation procedure.
We focus on support estimation in Section~\ref{sec:support}. 
%
Section~\ref{sec:Wasserstein} is devoted to the estimation of the distribution when it is compactly supported. 
We then exhibit in Section~\ref{subsec:geocondi} geometric conditions under which the identifiability theory of~\cite{LCGL2021} applies. Genericity of such conditions is discussed in the supplementary material Section \ref{sec:genericity}. We discuss possible improvements and open questions in Section \ref{sec:discussion}. 
Detailed proofs are given in Section \ref{sec:proofs} of the supplementary material. We gather in Section \ref{sec:notations} a summary of notations defined along the paper.

\subsection{Notations}
\label{sec_notations_intro}

The Euclidean norm (in any dimension) will be denoted $\|\cdot \|_{2}$. 
If $A$ is a subset of $\Rbb^D$, we write 
$d(x,A)= \inf\{ \|x-y\|_{2} \ | \ y \in A \}$. For any $r > 0$, we write $B_r = (-r,r)$.
For any measurable function $f$ on $B_r^D$, we write $\|f\|_{\infty,r}$ the essential supremum of $f$ over $B_r^D$ and $\|f\|_{2,r} $ the norm of $f$ in $L^2(B_r^D)$.

\section{The identifiability Theorem and estimation procedures}
\label{sec:ident}

In this section, we 
recall the general identifiability Theorem proved in~\cite{LCGL2021}
and we provide an overview of the ideas underlying our estimation procedures.

\subsection{Setting}

We consider independent and identically distributed observations $Y_{i}$, $i=1,\dots,n$   coming from the model
\begin{equation}
\label{eq:model0}
Y=X+\varepsilon,
\end{equation}
in which the signal $X$ and the noise $\varepsilon$ are independent random variables. 
We assume that the observation has dimension at least two, and that its coordinates can be partitioned in such a way that the corresponding blocks of noise variables are independently distributed, that is
\begin{equation}
\label{eq:model}
Y = \begin{pmatrix} Y^{(1)}\\Y^{(2)}\end{pmatrix} = \begin{pmatrix} X^{(1)}\\X^{(2)}\end{pmatrix} + \begin{pmatrix} \varepsilon^{(1)}\\ \varepsilon^{(2)}\end{pmatrix}=X+\varepsilon
\end{equation}
in which $Y^{(1)}, X^{(1)}, \varepsilon^{(1)} \in \Rbb^{d_1}$ and $Y^{(2)}, X^{(2)}, \varepsilon^{(2)} \in \Rbb^{d_2}$, for $d_1, d_2 \geq 1$ with $d_1+d_2 = D$, and we assume that the noise components $\varepsilon^{(1)}$ and $\varepsilon^{(2)}$ are independent random variables.
We write $G$ the distribution of $X$ and $\Mcal_{G}$ its support. For $i \in \{1,2\}$, we write ${\Qbb}^{(i)}$ the distribution of $\varepsilon^{(i)}$, so that $\Qbb = \Qbb^{(1)} \otimes \Qbb^{(2)}$ is the distribution of $\varepsilon$.

We shall not make any more assumption on the distribution of the noise $\varepsilon$, and we shall not assume that its distribution is known. Indeed in~\cite{LCGL2021}, it is proved that under very mild conditions on the distribution of the signal $X$, model \eqref{eq:model} is fully identifiable, that is one can recover $G$, and thus its support, and $\Qbb$ from the convolution $G* \Qbb$.

\subsection{Identifiability Theorem}
\label{subsec:ident}

Let us introduce the assumptions on the distribution of the signal we shall use.
To state the first assumption, we let $\rho$ be a positive real number.
\begin{description}
\item[A($\rho$)]
There exist $a,b > 0$ such that for all $\lambda \in \Rbb^D$, $\Ebb \left[\exp \left(\lambda^\top X\right)\right]
\leq a \exp \left( b \|\lambda\|_{2}^\rho\right)$.
\end{description}

Assumption A($\rho$) is about the tail of $G$ as the following proposition shows. 

\begin{proposition}
\label{prop:A(rho)}
\begin{itemize}
    \item A random variable $X$ satisfies A(1) if and only if its support is compact.
    \item A random variable $X$ satisfies A($\rho$) for $\rho > 1$ if and only if there exist constants $c,d > 0$ such that for any $t \geq 0$,
    \begin{equation*}
        \Pbb(\|X\|_2 \geq t) \leq c \exp(-d t^{\rho / (\rho-1)}).
    \end{equation*}
\end{itemize}
\end{proposition}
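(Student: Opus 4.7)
The plan is to handle the two parts by standard exponential-moment arguments: a one-sided Chernoff bound to go from the MGF control to tails, and a tail integration (via Young's inequality) to go the other way.

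For the first part, assume $X$ is supported in some ball of radius $R$. Then $\lambda^{\top} X \leq R \|\lambda\|_2$ almost surely, so $\Ebb[\exp(\lambda^{\top} X)] \leq \exp(R \|\lambda\|_2)$ and A(1) holds. Conversely, assume A(1) with constants $a, b$. For any unit vector $u \in \Rbb^D$ and any $s > 0$, Markov's inequality applied with $\lambda = s u$ gives $\Pbb(u^{\top} X \geq t) \leq a \exp(b s - s t)$. For any $t > b$, letting $s \to \infty$ yields $\Pbb(u^{\top} X > t) = 0$, hence $u^{\top} X \leq b$ almost surely. Choosing a countable dense family of unit vectors and intersecting the corresponding probability-one events, one gets $\|X\|_2 \leq b$ almost surely by density, so the support of $X$ is compact.

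For the second part, let $\rho > 1$ and denote $\rho^{*} = \rho/(\rho-1)$ its Hölder conjugate. For the direct sense, assume A($\rho$). The same Chernoff argument as above gives, for any unit $u$ and $s > 0$, $\Pbb(u^{\top} X \geq t) \leq a \exp(b s^{\rho} - s t)$. Optimising in $s$ (the minimiser being $s_{*} = (t/(b\rho))^{1/(\rho-1)}$) yields $\Pbb(u^{\top} X \geq t) \leq a \exp(-c_{1} t^{\rho^{*}})$ for an explicit constant $c_{1} = c_{1}(b, \rho) > 0$. Applying this to the $2D$ coordinate directions $\pm e_{i}$ and using $\|X\|_2 \leq \sqrt{D} \max_i |X_i|$ followed by a union bound gives $\Pbb(\|X\|_2 \geq t) \leq c \exp(-d t^{\rho^{*}})$ with $c = 2 D a$ and $d = c_{1} D^{-\rho^{*}/2}$.

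For the converse, assume the tail bound. Bounding $\lambda^{\top} X \leq \|\lambda\|_2 \|X\|_2$ and integrating the tail,
\begin{equation*}
\Ebb\bigl[\exp(\lambda^{\top} X)\bigr] \leq \Ebb\bigl[\exp(\|\lambda\|_2 \|X\|_2)\bigr] = 1 + \|\lambda\|_2 \int_{0}^{\infty} e^{\|\lambda\|_2 t} \Pbb(\|X\|_2 \geq t)\, dt \leq 1 + c \|\lambda\|_2 \int_{0}^{\infty} e^{\|\lambda\|_2 t - d t^{\rho^{*}}} dt.
\end{equation*}
By Young's inequality with the conjugate exponents $(\rho, \rho^{*})$, for any $\epsilon$ with $\epsilon^{\rho^{*}}/\rho^{*} < d$ one has $\|\lambda\|_2 t \leq \epsilon^{-\rho} \|\lambda\|_2^{\rho}/\rho + \epsilon^{\rho^{*}} t^{\rho^{*}}/\rho^{*}$, so the exponent is bounded by $C_{1} \|\lambda\|_2^{\rho} - c_{2} t^{\rho^{*}}$ with $c_{2} > 0$. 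The resulting integral in $t$ is a finite constant, so $\Ebb[\exp(\lambda^{\top} X)] \leq 1 + C \|\lambda\|_2 \exp(C_{1} \|\lambda\|_2^{\rho})$, which is dominated by $a' \exp(b' \|\lambda\|_2^{\rho})$ for suitable $a', b' > 0$ (the linear prefactor $\|\lambda\|_2$ and the bounded behaviour near $\lambda = 0$ being absorbed into the constants). The main technical step is this converse in Part 2, where the careful choice of $\epsilon$ in Young's inequality is needed to separate the $\|\lambda\|_2$ and $t$ dependencies and keep the $t$-integral convergent.
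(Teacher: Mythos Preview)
Your proof is correct and follows essentially the same strategy as the paper: Chernoff/Markov bounds in the forward direction and tail integration of $\Ebb[e^{\|\lambda\|\|X\|}]$ for the converse. The only notable difference is in the converse of Part~2, where you invoke Young's inequality to split $\|\lambda\|_2 t$ into $C_1\|\lambda\|_2^{\rho} + (\text{small})\,t^{\rho^*}$, whereas the paper achieves the same separation by the change of variables $s = s'\|\lambda\|^{\rho-1}$ and then splits the integral at the point where $-d s'^{\rho^*}+s'$ becomes $\leq -s'$; these are equivalent packagings of the same estimate.
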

The proof of Proposition~\ref{prop:A(rho)} is detailed in Section~\ref{proof:A(rho)}.

Under A($\rho$), the characteristic function of the signal can be extended into the multivariate analytic function
\begin{eqnarray*}
\Phi_X: \Cbb^{d_1}\times \Cbb^{d_2} &\longrightarrow& \Cbb \\
(z_1,z_2)&\longmapsto& \Ebb \left[ \exp \left(iz_1^\top X^{(1)} + i z_2^\top X^{(2)}\right)\right].
\end{eqnarray*}
The second assumption is the following. 

\begin{description}
\item[(Adep)] For any $z_{0}\in \Cbb^{d_1}$, 
$z \mapsto \Phi_X (z_{0},z)$
is not the null function and for any $z_{0}\in \Cbb^{d_2}$, 
$z \mapsto \Phi_X (z,z_{0})$
is not the null function.
\end{description}
It has been shown in \cite{LCGL2021} that several models satisfy this assumption, such as the repeated measurements submodel (see Corollary 2.3 in \cite{LCGL2021}) or the noisy independent component analysis submodel (see Corollary 2.2 in \cite{LCGL2021}). In particular, it follows directly from Corollary 2.4 of \cite{LCGL2021} and the inverse function Theorem that the errors in variable regression model (in which $X^{(2)} = g(X^{(1)})$ for some function $g$) satisfies assumption (Adep) when the regression function is a non-constant differentiable function.
In this paper, we are interested in identifying supports of distributions and we will provide geometrical conditions so that Assumption (Adep) is verified, see Section~\ref{subsec:geocondi}.

Obviously, if no centering constraint is put on the signal or on the noise, it is possible to translate the signal by a fixed vector $m \in \Rbb^D$ and the noise by $-m$ without changing the observation. The model can thus be identifiable only up to translation.

\begin{theorem}[from~\cite{LCGL2021}]
\label{thm:idG}
If the distribution of the signal satisfies A($\rho$) and (Adep), then the distribution of the signal and the distribution of the noise can be recovered up to translation from the distribution of the observations.
\end{theorem}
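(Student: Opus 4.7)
The plan is to prove that any two factorizations $\Phi_Y(t_1,t_2) = \Phi_X(t_1,t_2)\Phi_{\varepsilon^{(1)}}(t_1)\Phi_{\varepsilon^{(2)}}(t_2) = \Phi_{X'}(t_1,t_2)\Phi_{\varepsilon'^{(1)}}(t_1)\Phi_{\varepsilon'^{(2)}}(t_2)$ of the characteristic function of the observations, with both signals satisfying A($\rho$) and (Adep) and with independent noise blocks, must differ only by a translation: concretely, I want to produce $m\in\Rbb^D$ such that $\Phi_X(z) = e^{im^\top z}\Phi_{X'}(z)$, which then gives $X \stackrel{d}{=} X'+m$ and, by the factorisation identity, $\varepsilon \stackrel{d}{=} \varepsilon'-m$. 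The first move is to form the Kotlarski-type quantity
\begin{equation*}
Q(t_1,t_2) \;:=\; \frac{\Phi_Y(t_1,t_2)}{\Phi_Y(t_1,0)\,\Phi_Y(0,t_2)},
\end{equation*}
well defined on a real neighborhood of $0$. Since $\varepsilon^{(1)}$ and $\varepsilon^{(2)}$ are independent, the noise factors cancel between numerator and denominator, yielding
\begin{equation*}
Q(t_1,t_2) \;=\; \frac{\Phi_X(t_1,t_2)}{\Phi_X(t_1,0)\,\Phi_X(0,t_2)} \;=\; \frac{\Phi_{X'}(t_1,t_2)}{\Phi_{X'}(t_1,0)\,\Phi_{X'}(0,t_2)},
\end{equation*}
so that the ``dependence structure'' of the signal is identified from $\Phi_Y$ alone.

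Second, I would use A($\rho$) to extend $\Phi_X$ and $\Phi_{X'}$ to entire functions of order $\leq \rho$ on $\Cbb^D$, and (Adep) to guarantee that the coordinate slices $z_1\mapsto\Phi_X(z_1,0)$, $z_2\mapsto\Phi_X(0,z_2)$ and their primed analogues are not identically zero, hence their zero sets are proper analytic subsets of $\Cbb^{d_i}$. By the identity theorem, the equality for $Q$ propagates from a real neighborhood of $0$ to
\begin{equation*}
\Phi_X(z_1,z_2)\,\Phi_{X'}(z_1,0)\,\Phi_{X'}(0,z_2) \;=\; \Phi_{X'}(z_1,z_2)\,\Phi_X(z_1,0)\,\Phi_X(0,z_2) \quad \text{on } \Cbb^D,
\end{equation*}
which rearranges, off the slice zero sets, as $\Phi_X(z)/\Phi_{X'}(z) = g(z_1)\,h(z_2)$ with $g(z_1) = \Phi_X(z_1,0)/\Phi_{X'}(z_1,0)$ and $h(z_2) = \Phi_X(0,z_2)/\Phi_{X'}(0,z_2)$, normalised so that $g(0) = h(0) = 1$. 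Plugging this back into the two factorisations of $\Phi_Y$ on $\Rbb^D$ gives the real-axis identities $\Phi_{\varepsilon'^{(1)}}(t_1) = g(t_1)\,\Phi_{\varepsilon^{(1)}}(t_1)$ and $\Phi_{\varepsilon'^{(2)}}(t_2) = h(t_2)\,\Phi_{\varepsilon^{(2)}}(t_2)$.

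The main obstacle, and the place where the tail hypothesis A($\rho$) is genuinely exploited, is to show that $g$ and $h$ must be exponentials of linear polynomials, that is, $g(z_1) = e^{im_1^\top z_1}$ and $h(z_2) = e^{im_2^\top z_2}$ for some $m_j\in\Rbb^{d_j}$. As a quotient of entire functions of order $\leq \rho$, the product $g(z_1)h(z_2)$ is meromorphic of order $\leq \rho$ on $\Cbb^D$, and a separation-of-variables argument (freezing one variable to a point where the relevant slice is nonzero) transfers the finite order to $g$ and $h$ individually. A Hadamard-style factorisation then writes each of them as $e^P$ times a canonical product over its zeros and poles, with $P$ a polynomial of degree $\leq \lfloor\rho\rfloor$. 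The rigidifying constraint is that $g\,\Phi_{\varepsilon^{(1)}}$ and $h\,\Phi_{\varepsilon^{(2)}}$ remain characteristic functions on the real axis, hence bounded in modulus by $1$; combined with the finite-order bound on complex directions, a Phragm\'en--Lindel\"of argument trivialises the canonical product and reduces $P$ to degree at most one, while the normalisation $g(0)=h(0)=1$ together with real-axis boundedness forces $P$ to be purely imaginary and linear. Collecting $m=(m_1,m_2)\in\Rbb^D$ yields the announced translation and identifies $\Qbb$ simultaneously. This Phragm\'en--Lindel\"of / Hadamard rigidity step, which is the only place where A($\rho$) is truly needed, is by far the most delicate part of the argument.
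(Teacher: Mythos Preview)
The paper does not prove this theorem; it is quoted from \cite{LCGL2021}, and the only information given here is the contrast $M(\phi;\nu|\Phi_X)$ in Section~\ref{subsec:chara}, which shows that the cross-product identity
\[
\phi(t_1,t_2)\,\Phi_X(t_1,0)\,\Phi_X(0,t_2)\;=\;\Phi_X(t_1,t_2)\,\phi(t_1,0)\,\phi(0,t_2)
\]
is indeed the backbone of the argument. Your overall plan (Kotlarski ratio, analytic continuation, rigidity of the marginal ratios $g,h$) is therefore the right one and matches the cited reference.

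There is, however, a genuine gap in how you use (Adep). You invoke it only to say that $z_1\mapsto\Phi_X(z_1,0)$ and $z_2\mapsto\Phi_X(0,z_2)$ are not identically zero, but this is automatic from $\Phi_X(0,0)=1$ and does not need (Adep). The place where (Adep) is truly needed is the step you describe as ``a Phragm\'en--Lindel\"of argument trivialises the canonical product'': you must show that $g(z_1)=\Phi_X(z_1,0)/\Phi_{X'}(z_1,0)$ and $h$ have \emph{no zeros and no poles}. Phragm\'en--Lindel\"of controls growth, not zeros, so it cannot do this. The correct argument runs as follows: fix any $z_1^{0}\in\Cbb^{d_1}$; by (Adep) for both $\Phi_X$ and $\Phi_{X'}$, the slices $z_2\mapsto\Phi_X(z_1^{0},z_2)$ and $z_2\mapsto\Phi_{X'}(z_1^{0},z_2)$ are not identically zero, so one can choose $z_2^{0}$ at which $\Phi_X(z_1^{0},z_2^{0})$, $\Phi_{X'}(z_1^{0},z_2^{0})$, $\Phi_X(0,z_2^{0})$, $\Phi_{X'}(0,z_2^{0})$ are all nonzero. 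Evaluating the cross-product identity at $(z_1^{0},z_2^{0})$ then gives $g(z_1^{0})=h(z_2^{0})^{-1}\,\Phi_X(z_1^{0},z_2^{0})/\Phi_{X'}(z_1^{0},z_2^{0})\in\Cbb\setminus\{0\}$. Hence $g$ (and symmetrically $h$) is entire and nowhere vanishing. Without (Adep) this fails: if $X^{(1)}\perp X^{(2)}$ the cross-product identity only forces $\phi$ to factor as well, and $g$ can be an arbitrary ratio of marginal characteristic functions.

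Once $g$ is entire, nowhere vanishing, and of order $\leq\rho<2$, Hadamard's theorem (applied on each complex line through $0$) gives directly $g=e^{P}$ with $\deg P\leq 1$; there is no canonical product left to ``trivialise'' and no Phragm\'en--Lindel\"of step. The constraint that $g\,\Phi_{\varepsilon^{(1)}}=\Phi_{\varepsilon'^{(1)}}$ and $g^{-1}\Phi_{\varepsilon'^{(1)}}=\Phi_{\varepsilon^{(1)}}$ are both bounded by $1$ near the origin then forces $\mathrm{Re}\,P\equiv 0$, i.e.\ $g(z_1)=e^{im_1^{\top}z_1}$, as you wanted.
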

The proof of this theorem is based on recovering $\Phi_X$ for which the structure provided by (Adep) is crucial. The arguments show that knowing the characteristic function of the observations in a neighborhood of the origin allows to recover $\Phi_{X}$ in a neighborhood of the origin, and then over the whole multidimensional complex plane.

\subsection{Overview of the estimation procedures}
\label{overview}


The identifiability result above is the base upon which our estimators are built. A key part of its proof is the following result: if a multivariate analytic function $\phi$ satisfies $\phi(0) = 1$, $\phi(t) = \overline{\phi(-t)}$ for all $t$, as well as assumptions A($\rho$) for some $\rho < 2$, (Adep) and
\begin{equation*}
\phi(t_1,t_2) \Phi_X(t_1,0) \Phi_X(0,t_2) = \Phi_X(t_1,t_2) \phi(t_1,0) \phi(0,t_2)
\end{equation*}
for all $(t_1, t_2)$ in a neighborhood of zero in $\Rbb^{d_1} \times \Rbb^{d_2}$, then $\phi = \Phi_X$.
In particular, fixing some $\nuest > 0$, the only function $\phi$ such that
\begin{equation*}
\int_{B_{\nuest}^{d_1}\times B_{\nuest}^{d_2}} | \phi(t_1,t_2) \Phi_X(t_1,0) \Phi_X(0,t_2) - \Phi_X(t_1,t_2) \phi(t_1,0) \phi(0,t_2) |^2 
|\Phi_{\varepsilon^{(1)}}(t_1)\Phi_{\varepsilon^{(2)}}(t_2) |^2 d t_1 d t_2 = 0
\end{equation*}
is $\Phi_X$. Indeed 
the characteristic functions of the noises $\Phi_{\varepsilon^{(i)}}$, $i \in \{1,2\}$, are continuous and equal to 1 in zero. Moreover, the introduction of these characteristic functions allow to combine them with the characteristic functions $\Phi_X$ of the signal to get the characteristic functions of the observations, so that we are able to construct an empirical criterion $M_n$ that estimates the above integral (see Equation~\eqref{eq_def_Mn}). 

Following the usual so-called $M$-estimation procedure, we then construct an estimator $\widehat{\phi}$ of $\Phi_X$
by minimizing $M_n$ over multivariate analytic functions satisfying A($\rho$) and such that $\phi(0) = 1$ and $\phi(t) = \overline{\phi(-t)}$ for all $t$, satisfying (Adep), and having a finite polynomial expansion of degree $m$.
The degree $m$ must be well chosen and 
we use $m=\lceil 4\frac{\log n}{\log \log n} \rceil$.
Proposition~\ref{prop:phihat} shows that with high probability, for some $\nu \in (0,\nuest]$ such that the Fourier transform of the noise $\varepsilon$ does not vanish on $[-\nu,\nu]^D$,
\begin{equation*}
\| \widehat{\phi} - \Phi_X \|^2_{2,\nu} = O\left(\frac{1}{n^{1-\delta}}\right)
\end{equation*}
for some $\delta > 0$ arbitrarily small.
This control is obtained uniformly over a range of values of $\rho$, to be used further for adaptation in the parameter $\rho$. 

While it may not be possible to directly recover the distribution $G$ of $X$  from $\Phi_X$ by inverse Fourier transform since $G$ could be singular, it is possible to recover the convolution function $G * \Psi$ of $G$ by $\Psi$ for any properly chosen kernel $\Psi$. Moreover, when $\Psi$ is an approximation of the unity, 
the support of $G$ can be approximated by an upper level set of $G * \Psi$. Intuitively, if $\Psi$ were the density of a random variable $Z$, $G * \Psi$ would be the density of $X+Z$. When $\Psi$ is an approximation of the unity, 
the distribution of $Z$ will be close to the Dirac in zero, thus $X+Z$ will concentrate close to the support of $X$.
Figure~\ref{fig:manifold} illustrates this idea. In Section~\ref{subsec:Mupper}, we propose an estimator of the support as an upper-level set of an estimated density following ideas of~\cite{GPVW12}, the main difference being with the smoothing kernel we choose.
Indeed, with this kernel, no prior knowledge on the intrinsic dimension is needed to build the estimator. The precise choice of $\Psi$ is non trivial and is given in Section~\ref{subsec:Mupper}.

\begin{figure}[t]
\centering
    \includegraphics[scale=0.45]{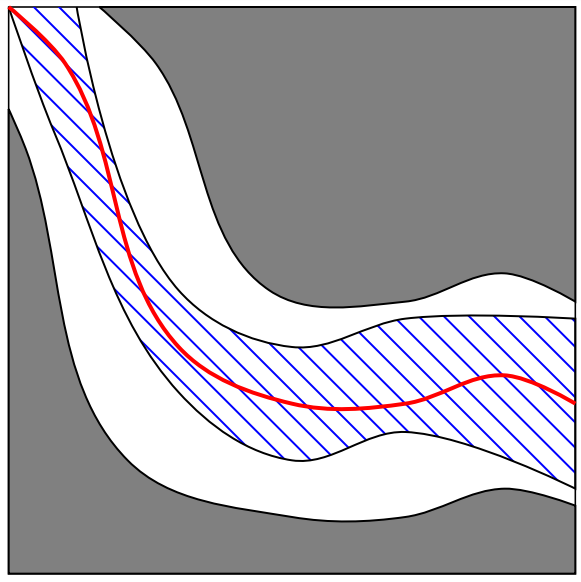}








    \caption{In red, the support of the signal distribution $\Mcal_G$, the blue hatched 
    area represents the set $\{ \bar{g} \geq \lambda_n + \| \bar{g} - \widehat{g}_n\|_{\infty} \}$ and the gray area represents the set $\{ \bar{g} \leq \lambda_n - \| \bar{g} - \widehat{g}_n\|_{\infty} \}$, so that the estimator of the support lies in between the gray and the blue areas.}
    \label{fig:manifold}
    
\end{figure}

Proposition~\ref{prop:phihat} directly ensures that the estimator
\begin{equation*}
\widehat{g}:= \Fcal^{-1}\left[ \widehat{\phi} \cdot \Fcal[\Psi] \right]
\end{equation*}
is close to $\bar{g}:= \Fcal^{-1}\left[ \Phi_X \cdot \Fcal[\Psi] \right] = G * \Psi$, whose upper level sets are close to $\Mcal_G$. Here, for any integrable function $f$ from $\Rbb^D$ to $\Rbb$, we denote by $\Fcal[f]$ (resp. $\Fcal^{-1}[f]$) the (resp. inverse) Fourier transform of $f$ defined, for all $y \in \Rbb^d$, by
\begin{equation*}
\Fcal[f](y) = \int e^{it^{\top}y} f(t) dt \ \ \text{and} \ \ \Fcal^{-1}[f](y) = (\frac{1}{2 \pi})^D \int e^{-it^{\top}y} f(t) dt.
\end{equation*}

Our estimator is thus taken as
\begin{equation*}
\widehat{\Mcal} = \{ y: \widehat{g}(y) > \lambda \},
\end{equation*}
where $\lambda$ has to be well chosen.
To obtain the best possible rates 
requires a further polynomial truncation of $\widehat{\phi}$ before Fourier inversion, depending on the parameter $\rho$. We then provide a model choice procedure to adapt in $\rho$ and get adaptive rates. Estimation of the distribution starts from the estimation of the support and follows similar ideas.

\section{Estimation of the support}
\label{sec:support}

In Section~\ref{subsec:chara}, we describe the estimator of the characteristic function used in all our procedures, and we give its properties. In Section~\ref{subsec:Mupper}, we provide an estimator of the support of the signal when $\rho$ is known, and prove an upper bound for the maximum risk in Hausdorff distance. Section~\ref{subsec:adapt} is devoted to the construction of an adaptive estimator of the support for unknown $\rho$. In Section~\ref{subsec:Mlower}, we prove a lower bound which shows that our estimator is minimax up to some power of $\log \log n$ for all $\rho \in (1,2)$ and up to any small power of $\log n$ for $\rho=1$. 


\subsection{Estimation of the characteristic function}
\label{subsec:chara}

We shall need sets of multivariate analytic functions for which A($\rho$) and (Adep) hold. 
For any $S>0$, let $\Upsilon_{\rho,S}$ be the subset of multivariate analytic functions from $\Cbb^{D}$ to $\Cbb$ defined as follows.
\begin{equation}
\label{eq_def_Upsilon}
\Upsilon_{\rho,S} = \left\{
\phi \text{ analytic } \text{s.t. } \forall z\in \Rbb^{D},\overline{\phi(z)}=\phi(-z), \phi(0) = 1 \text{ and } \forall i \in \Nbb^D \setminus \{0\}, \left| \frac{\partial^{i} \phi(0)}{\prod_{a=1}^d i_a!}\right| \leq \frac{S^{\|i\|_1}}{\|i\|_1^{ \|i\|_1 /\rho}} \right\}
\end{equation}
where $\|i\|_1 =\sum_{a=1}^{D} i_{a}$.
If the distribution of $X$ satisfies A($\rho$), then there exists $S$ such that $\Phi_X \in \Upsilon_{\rho,S}$, and the converse also holds, see Lemma 3.1 in~\cite{LCGL2021}. 

Let $\Phi_{\varepsilon^{(i)}}$ be the characteristic function of $\varepsilon^{(i)}$, $i=1,2$, and define for all $\phi\in \Upsilon_{\rho,S}$ and any $\nu >0$,
\begin{equation*}
M(\phi; \nu | \Phi_X) = \int_{B_{\nu}^{d_1}\times B_{\nu}^{d_2}} | \phi(t_1,t_2) \Phi_X(t_1,0) \Phi_X(0,t_2) - \Phi_X(t_1,t_2) \phi(t_1,0) \phi(0,t_2) |^2 
|\Phi_{\varepsilon^{(1)}}(t_1)\Phi_{\varepsilon^{(2)}}(t_2)  |^2 d t_1 d t_2.
\end{equation*}
Fix some $\nuest>0$, and define $M_{n}$ for any $\phi$ as follows
\begin{equation}
\label{eq_def_Mn}
M_{n}(\phi) = \int_{B_{\nuest}^{d_1}\times B_{\nuest}^{d_2}} | \phi(t_1,t_2) \tilde\phi_n(t_1,0) \tilde\phi_n(0,t_2) - \tilde \phi_n(t_1,t_2) \phi(t_1,0) \phi(0,t_2) |^2 d t_1 d t_2 ,
\end{equation}
where for all $(t_1,t_2) \in \Rbb^{d_1}\times \Rbb^{d_2}$,
\begin{equation*}
\tilde \phi_n(t_1,t_2) = \frac{1}{n}\sum_{\ell=1}^{n} \exp\left\{it_1^\top Y_{\ell}^{(1)} + it_2^\top Y_{\ell}^{(2)}\right\}.
\end{equation*}
By the law of large numbers, $M_{n}(\phi)$ is a consistent estimator of $M(\phi; \nuest | \Phi_X)$. As usual in M-estimation to get consistency of the estimator, we need to minimize $M_n$ over a closed set of functions over which $\Phi_X$ is the only minimizer of
$M(\cdot; \nuest | \Phi_X)$.
We thus introduce $\Hcal$ a subset of functions $\Cbb^D \rightarrow \Cbb^D$ such that all elements of $\Hcal$ satisfy (Adep) and such that the set of the restrictions to $B_{\nuest}^D$ of functions in $\Hcal$ is closed in $L^{2}(B_{\nuest}^D)$. Indeed we shall consider consistency of the estimator in $L^{2}(B_{\nuest}^D)$, see Proposition \ref{prop:phihat} below.  For instance, $\Hcal$ can be defined using submodels such as errors in variable regression with regression functions in a closed set of smooth functions when it is believed that the signal $X$ belongs to this submodel. $\Hcal$ can also be defined using geometric constraints such as those described in Section \ref{subsec:geocondi}, as soon as it is believed that the support of $X$ satisfies those constraints since we need $\Phi_X \in \Hcal$.
Let also $\Cbb_m[X]$ be the set of polynomial functions of degree $m$ in $D$ indeterminates.
%
For any integer $m$ and any $\rho > 1$, we define $\widehat \Phi_{n,m,\rho}$ be a (up to $1/n$) measurable minimizer of the functional $\phi \mapsto M_n(\phi)$ over $\Upsilon_{\rho,S} \cap \Hcal \cap \Cbb_m[X]$, that is
\begin{equation*}
M_n( \widehat \Phi_{n,m,\rho}) \leq \inf_{\phi \in \Upsilon_{\rho,S} \cap \Hcal \cap \Cbb_m[X]} M_n(\phi) + \frac{1}{n}.
\end{equation*}
For good choices of $m$, $\widehat \Phi_{n,m,\rho}$ is a consistent estimator of $\Phi_X$ in
$L^{2}(B_{\nu}^D)$ at almost parametric rate. The constants will depend on the signal through $\rho$ and $S$, and on the noise through its second moment and the following quantity:
\begin{equation}
\label{eq:cnu}
c_{\nu}=\inf\{|\Phi_{\varepsilon^{(1)}}(t)|,\;t\in B_{\nu}^{d_1}\} \wedge 
\inf\{|\Phi_{\varepsilon^{(2)}}(t)|,\;t\in B_{\nu}^{d_2}\}.
\end{equation}
Note that for any noise distribution, for small enough $\nu$, $c_{\nu}$ is a positive real number.
For any $\nu>0$, $c(\nu ) >0$, $E>0$, define
$\Qcal^{(D)} (\nu,c({\nu}),E)$ the set of distributions $\mathbb{Q}=\otimes_{j=1}^2 \mathbb{Q}_{j}$ on $\Rbb^D$ such that $c_{\nu}\geq c(\nu)$ and $\int_{\Rbb^D} \|x\|^2 d\mathbb{Q}(x)\leq E$. 

\begin{proposition}[Variant of Proposition 1 in~\cite{InferSphere}]
\label{prop:phihat}
For all $\rho_0 < 2$, $\nu\in(0,\nuest]$,
$S,c(\nu),E,C>0$ and $\delta, \delta',\delta'' \in (0,1)$ with $\delta' > \delta$,
there exist positive constants $c'$ and $n_0$ such that the following holds: let $\rho \in [1, \rho_0]$, for all $\Phi_X \in \Upsilon_{\rho,S} \cap \Hcal$ and $\Qbb \in \Qcal^{(D)} (\nu,c({\nu}),E)$, for all $n\geq n_0$ and $s \in [1, n^{1-\delta'}]$, with probability at least $1 - 2e^{-s}$,
\begin{equation*}
\sup_{\rho' \in [\rho,\rho_0], \  m \in [2\rho' \frac{\log n}{\log \log n}, C \frac{\log n}{\log \log n}]}
    \int_{B_{\nu}^{d_1}\times B_{\nu}^{d_2}} |\widehat \Phi_{n,m,\rho'}(t) - \Phi_X(t)|^2 dt
        \leq c' \left(\frac{s}{n^{1-\delta}}\right)^{1-\delta''}.
\end{equation*}
\end{proposition}

Note that the constants $c$ and $n_0$ do not depend on the distribution of $X$ or $\varepsilon$.
The proof of Proposition~\ref{prop:phihat} is based on results in~\cite{InferSphere} and~\cite{LCGL2021} and is detailed in Section~\ref{subsec:prop:phihat}.
For sake of simplicity, we denote $\widehat \Phi_{n,\rho}$ the estimator $\widehat \Phi_{n,m,\rho}$ in which $m=\lceil 4\frac{\log n}{\log \log n} \rceil$. Note that 
with this value of $m$, we may apply the inequality in Proposition \ref{prop:phihat} uniformly for $\rho'\in[\rho,\rho_0]$ whatever $\rho_0\in[1,2)$ and $\rho\in[1,\rho_0)$.

\subsection{Estimation of the support: upper bound}
\label{subsec:Mupper}

We shall consider the minimax risk in Hausdorff distance, which is defined, for $A_1$ and $A_2$ subsets of $\Rbb^D$, as
\begin{equation*}
d_H(A_1,A_2) = \sup_{x \in A_1 \cup A_2}|d(x,A_1)-d(x,A_2)|.
\end{equation*}
When $\rho > 1$, the support of $G$ is not compact. Since we allow the support to be a non-compact set, 
we define a truncated loss function as in~\cite{GPVW12}. We fix $\Kcal$ a compact subset of $\Rbb^D$ (which can be arbitrarily large) and for any $S_1$, $S_2$ subsets of $\Rbb^D$, the truncated loss function is
\begin{equation*}
H_{\Kcal} (S_1, S_2) = d_{H}(S_1 \cap \Kcal, S_2 \cap \Kcal).
\end{equation*}
We now define the class over which we will prove an upper bound for the maximum risk in Hausdorff distance. As in~\cite{LCGL2021}, it will be convenient to use $\kappa=1/\rho$. Since consistency of $\widehat{\Phi}_{n,m,\rho}$ is obtained for $X$ such that $\Phi_{X}\in \Hcal \cap \Upsilon_{1/\kappa,S}$ we summarize $\Lcal(\kappa, S, \Hcal)$ as the set of distributions $G$ such that, if $X$ is a random variable with distribution $G$, then $\Phi_X \in \Hcal \cap \Upsilon_{1/\kappa,S}$.
%
Moreover, for any positive constants $a$, $d$ and $r_0$, we define $St_{\Kcal}(a,d,r_0)$ as the set of positive measures $G$ such that for all $x \in \Mcal_G \cap \Kcal$, for all $r \leq r_0$, $G(B(x,r)) \geq a r^d$.
The distributions in $St_{\Kcal}(a,d,r_0)$ are called $(a,d)$-standard. Such a class of distributions is commonly used for inferring topological information, see for instance~\cite{MR3837127}.
\begin{remark}
\begin{itemize}
\item If a measure $\mu$ (for instance the $d$-dimensional Hausdorff measure on a manifold) is $(a,d')$-standard for some positive constants $a$ and $d'$, and if $G$ admits a density $g$ with respect to $\mu$ such that $g$ is lower bounded by $c>0$, then $G$ is $(ac,d')$-standard.
\item We do not make any assumption on the reach  of the support of $G$ (see~\cite{Federer59}) since it is not necessary here, although it provides a convenient way to check the $(a,d)$-standard assumption: if $\Mcal_G$ is a Riemannian manifold of dimension $d$ with $\text{reach}(\Mcal_G) \geq \tau_{\min} > 0$, then the $d$-dimensional Hausdorff measure restricted to $\Mcal_G$ is ($a,d$)-standard for some $a > 0$ (see Lemma 32 of~\cite{GPVW12}).
\end{itemize} 
\end{remark}



We now introduce the kernel we shall use for our construction. For any  $A > 0$, define, for all $y \in \Rbb$, 
\begin{equation*}
u_{A}(y) = \exp\left\{ -\frac{1}{(1-2y)^{A}} -\frac{1}{(1+2y)^{A}}\right\} 1|_{[-\frac{1}{2},\frac{1}{2}]}(y)
\end{equation*}
and
for all $y \in \Rbb^D$, 
\begin{equation*}
    \psi_A(y) = I(A) \ \Fcal^{-1}[u_{A} * u_{A}](\|y\|_2)  \ \text{with} \ I(A) = \frac{1}{\int \Fcal^{-1} [ u_{A} * u_{A} ](\|x\|_2) dx} .
\end{equation*}
For $h>0$ and $x \in \Rbb^D$, we write $\psi_{A,h}(x) = h^{-D} \psi_A(x/h)$, hence $\Fcal[\psi_{A,h}](t) = \Fcal[\psi_A](th)$. The following properties of $\psi_A$ and $\Fcal[\psi_A]$ hold
\begin{enumerate}
\item[(I)] The support of $\Fcal[\psi_A]$ is the unit ball $\{y\in\Rbb^D \,: \, \|y\|_{2}\leq 1\}$.
\item[(II)] $\psi_A > 0$ and $\Fcal[\psi_A] \geq 0$.
\item[(III)] There exist constants $c_{A}>0$ and $d_{A}>0$ such that for all $x \in \{y\in\Rbb^D \,: \, \|y\|_{2}\leq c_{A}\}$, $\psi_A(x) \geq d_{A}$.
\item[(IV)] $\psi_A$ and $\psi_{A,h}$ are probability densities on $\Rbb^D$.
\item[(V)] (Lemma in~\cite{BumpFunction}) For all $A>0$, there exists $\beta_A > 0$
such that
\begin{equation}
\label{lim}
    \lim_{\|t\|_{2} \rightarrow \infty} \exp{\{\beta_A \|t\|_{2}^{\frac{A}{A + 1}}\}} \psi_A(t) = 0
\end{equation} 
\item[(VI)] It holds
\begin{equation}
    \label{psi3}
\| \psi_{A,h}\|_2 = \frac{I(A)}{h^{D/2}} \| u_A * u_A \|_2.
\end{equation}
\end{enumerate}
%
%
Fix $A>0$ and define the function 
$\bar{g}$ by
\begin{equation*}
\forall y \in \Rbb^D, \ \bar{g}(y) = \left(\frac{1}{2 \pi}\right)^{D} \int e^{-it^{\top}y} \Fcal[\psi_A](ht) \  \Phi_X(t) dt, 
\end{equation*}
which may be rewritten using usual Fourier calculus, for all
$y \in \Rbb^D$, as
\begin{equation*}
\bar{g}(y) = (\psi_{A,h} * G) (y) = \frac{1}{ h^D} \int_{\Rbb^D} \psi_A\left(\frac{\|y - u\|_2}{h}\right) dG(u).
\end{equation*}
The density $\bar{g}$ is a kernel smoothing of the distribution $G$. The bandwidth parameter $h$ will be chosen appropriately in Theorem~\ref{theorem:rateH} below.
We now construct an estimator of $\bar{g}$ by truncating $\widehat{\Phi}_{n,1/\kappa}$ depending on $\kappa$. Adaptation with respect to $\kappa$ is handled in Section~\ref{subsec:adapt}. 
For some integer $m_{\kappa} > 0$ to be chosen later, let 
\begin{equation*}
\forall y \in \Rbb^D, \  \widehat{g}_{n,\kappa}(y) = \left(\frac{1}{2 \pi}\right)^{D} \int e^{-it^{\top}y} \Fcal[\psi_A] (ht) \ T_{m_{\kappa}} \widehat{\Phi}_{n,1/\kappa}(t) dt,
\end{equation*}
in which
for any multivariate analytic function $\phi$ defined in a neighborhood of $0$ in $\Cbb^D$ 
as $ \phi: x \mapsto \sum_{(i_1, \dots, i_D) \in \Nbb^D} c_i \prod_{a=1}^D x_a^{i_a}$, 
its truncation on $\Cbb_m[X]$ for any integer $m$ is
\begin{equation*}
T_m \phi: x \mapsto \sum_{(i_1, \dots, i_D) \in \Nbb^D \: \ i_1 + \dots + i_D\leq m} c_i \prod_{a=1}^D x_a^{i_a}.
\end{equation*}
Since for all $t \in \Rbb^D$,  $T_{m_{\kappa}} \widehat{\Phi}_{n,1/\kappa}(-t) = T_{m_{\kappa}} \widehat{\Phi}_{n,1/\kappa}(t)$, the function $\widehat{g}_{n,\kappa}$ is  real valued.
Finally, define an estimator of the support of the signal as the upper level set
\begin{equation*}
\widehat{\Mcal}_{\kappa} = \left\{ y \in \Rbb^D \ | \ \widehat{g}_{n,\kappa}(y) > \lambda_{n,\kappa} \right\},
\end{equation*}
for some $\lambda_{n,\kappa}$.
The main theorem of this section gives an upper bound of the maximum risk.

\begin{theorem}
\label{theorem:rateH}
Let $\kappa \in (1/2,1]$, $a>0$ , $d \leq D$, $r_0 > 0$.
For $c_h \geq \exp{(2D+2)}$ and $\ell \in (0, 1)$, define $m_{\kappa}$ and $h$ as 
\begin{equation*}
    m_{\kappa} = \Bigg \lfloor \frac{1}{4 \kappa} \frac{ \log(n)}{\log  \log(n)} \Bigg \rfloor, \  \ \  h = c_h S m_{\kappa}^{-\kappa}
\end{equation*}
and $\lambda_{n,\kappa}$ depending whether $d<D$ or $d=D$ as
\begin{itemize}
    \item if $d<D$,
    \begin{equation*}
        \lambda_{n,\kappa} =\left(\frac{1}{h}\right)^{\ell},
    \end{equation*}

    \item if $d=D$,
    \begin{equation*}
        \lambda_{n,\kappa} =\frac{1}{4} ac_{A}^{D}d_A.
    \end{equation*}
\end{itemize}
Then for any $\kappa_{0}\in (1/2,1]$,  $\nu\in(0,\nu_\text{est}]$, $c(\nu)>0$, $E>0$ $S>0$, there exists $n_0$ and $C > 0$ such that for all $n \geq n_0$,
\begin{equation*}
\sup_{\kappa\in[\kappa_{0},1]}
\underset{\Qbb\in \Qcal^{(D)} (\nu,c({\nu}),E)}
        {\sup_{G \in St_{\Kcal}(a,d,r_0)\cap \Lcal(\kappa,S,{\cal H})}} \ 
    \frac{\log(n)^{\kappa}}{\log( \log(n))^{\kappa + \frac{A+1}{A}}} \Ebb_{(G * \Qbb)^{\otimes n}}[H_{\Kcal}(\Mcal_{G},\widehat{\Mcal}_{\kappa})]
        \leq C .
\end{equation*}
\end{theorem}


\begin{remark}
\begin{itemize}
\item We prove in the next section a nearly matching lower bound. Thus, the minimax rate of convergence of the support in truncated Hausdorff distance depends on $\kappa$, that is on the way the distribution of the signal behaves at infinity. This rate deteriorates when the distribution of the signal has heavier tails. Indeed, since the distribution of the noise is unknown, taking into account distant observation points to build the estimator of the support becomes more difficult.

\item When $d<D$, thanks to the use of the kernel $\psi_{A}$, our estimator does not require the knowledge of $d$, which has to be compared with the estimator in~\cite{GPVW12} where prior knowledge of $d$ is needed.

\item In~\cite{GPVW12}, the upper bound on the rate is of order $1/\sqrt{\log n}$. Here we get a bound of order $1/(\log n)^{\kappa}$ depending on the tail of the distribution of the signal. We do not need to know the distribution of the noise, contrarily to~\cite{GPVW12} where the distribution of the noise is used in the construction of the estimator, as usual in the classical deconvolution literature.

\item It may be seen from the proof of Theorem~\ref{theorem:rateH} that the choice $\lambda_{n,\kappa} = \frac{1}{4} ac_{A}^{D}d_A$ is valid for any $d$. However, this requires the knowledge of $a$. 

\item Note that there are two truncation steps: the first one is implicit in the construction of $\hat{\Phi}_{n,1/\kappa}$ (chosen at the end of~Section~\ref{subsec:chara}) and the second one appears in the definition of $\widehat{g}_{n,\kappa}$. This second truncation is necessary to control the error of $\hat{\Phi}_{n,1/\kappa}$ on $B_{1/h}^D$ (see Lemma~\ref{lemma:gamma}, compared to the error on $B_\nu^D$ in Proposition~\ref{prop:phihat}), and the degree $m_\kappa$ in the second truncation is always smaller than the degree $m$ used in the construction of $\hat{\Phi}_{n,1/\kappa}$.

\item The constant $\ell$ can be chosen arbitrarily between 0 and 1.
\end{itemize}
\end{remark}

The proof of Theorem~\ref{theorem:rateH} is detailed in Section~\ref{proof:thmrateH}. 
As in~\cite{GPVW12}, the idea is to lower bound $\bar{g}$ on the support $\Mcal_{G}$ when the bandwidth parameter $h$ becomes small, and to upper bound it on every points further than a small distance (depending on $h$) from that support, see Lemmas~\ref{lem_minoration_gbar} and~\ref{lemma:sup} below. 

\begin{lemma}
\label{lem_minoration_gbar}
Assume $G \in St_{\Kcal}(a,d,r_0)$, then for any $h \leq r_0/c_A$,
\begin{equation*}
\inf_{y \in \Mcal_{G} \cap \Kcal} \bar{g}(y) \geq a c_{A}^{d}d_{A}\left(\frac{1}{h}\right)^{D-d}.
\end{equation*}
\end{lemma}
The proof of Lemma~\ref{lem_minoration_gbar} is detailed in Section~\ref{sec_proof_lem_minoration_gbar}.

\begin{lemma}
\label{lemma:sup}
For any $C_1 > 0$, there exists $h_0 > 0$ depending only on $C_1$, $D$ and $A$ such that for any $h \leq h_0$,
\begin{equation*}
\sup \left\{ \bar{g}(y) \ | \ y \in \Kcal, \;d(y,\Mcal_{G}) > h \left[\frac{D}{\beta_A}  \log\left(\frac{1}{h}\right)\right]^{\frac{A+1}{A}} \right\} \leq C_1.
\end{equation*}
\end{lemma}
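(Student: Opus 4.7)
The plan is to use the explicit representation $\bar g(y) = (\psi_{A,h} * G)(y) = h^{-D}\int \psi_A((y-u)/h)\,dG(u)$ together with the tail decay of $\psi_A$ from property (V), and to exploit the fact that the distance condition on $y$ was precisely tuned to match that tail decay.

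First, I would fix $y \in \Kcal$ with $d(y, \Mcal_G) > hR$, where $R = \left[\frac{D}{\beta_A}\log(1/h)\right]^{(A+1)/A}$. Since the support of $G$ is closed, every $u$ belonging to the support satisfies $\|y - u\|_2 \geq d(y,\Mcal_G) > hR$, hence $\|y-u\|_2/h > R$. The key algebraic observation is the calibration
\begin{equation*}
\beta_A R^{A/(A+1)} = D\log(1/h), \qquad \text{so that} \qquad \exp\bigl(-\beta_A R^{A/(A+1)}\bigr) = h^D.
\end{equation*}

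Next I invoke property (V): the relation $\lim_{\|t\|_2 \to \infty} \exp\{\beta_A \|t\|_2^{A/(A+1)}\}\psi_A(t) = 0$ means that for every $\varepsilon > 0$ there exists $T_\varepsilon > 0$ (depending only on $A$ and $\varepsilon$) such that $\psi_A(t) \leq \varepsilon \exp(-\beta_A \|t\|_2^{A/(A+1)})$ whenever $\|t\|_2 \geq T_\varepsilon$. Apply this with $\varepsilon = C_1$ and choose $h_0 > 0$ so small that for all $h \leq h_0$, $R = \left[\frac{D}{\beta_A}\log(1/h)\right]^{(A+1)/A} \geq T_{C_1}$; this is possible since $R \to \infty$ as $h \to 0$, and $h_0$ depends only on $C_1$, $D$ and $A$.

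Then for $h \leq h_0$ and any $u$ in the support of $G$, the monotonicity of $t \mapsto t^{A/(A+1)}$ combined with $\|y-u\|_2/h > R \geq T_{C_1}$ yields
\begin{equation*}
\psi_A\!\left(\tfrac{y-u}{h}\right) \leq C_1 \exp\!\left(-\beta_A (\|y-u\|_2/h)^{A/(A+1)}\right) \leq C_1 \exp\!\left(-\beta_A R^{A/(A+1)}\right) = C_1\, h^D.
\end{equation*}
Integrating against $G$, which is a probability measure, gives $\bar g(y) \leq h^{-D} \cdot C_1 h^D \cdot G(\Rbb^D) = C_1$, and taking the supremum over the admissible $y$ concludes the proof. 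No real obstacle is expected: the statement is essentially a bookkeeping exercise that verifies the chosen radius $hR$ matches exactly the $\log$-scale decay rate of the kernel $\psi_A$.
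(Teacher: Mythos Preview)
Your proof is correct and follows essentially the same approach as the paper: both use property (V) to bound $\psi_A$ by $C_1 \exp(-\beta_A t^{A/(A+1)})$ beyond some threshold, then exploit the calibration $\beta_A R^{A/(A+1)} = D\log(1/h)$ to cancel the $h^{-D}$ factor. The only cosmetic difference is that the paper writes the argument with a free parameter $\varepsilon$ (yielding $\bar g(y) \leq C_1 h^{\varepsilon - D}$) before specializing to $\varepsilon = D$, whereas you go directly to $\varepsilon = D$, which is arguably cleaner.
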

The proof of Lemma~\ref{lemma:sup} is detailed in Section~\ref{sec_proof_lemmasup}.

The last ingredient is to control the difference between the convoluted density and its estimator, defined as $\Gamma_{n,\kappa} = \|\widehat{g}_{n,\kappa} - \bar{g}\|_{\infty} = \sup_{y \in \Rbb^D} |\widehat{g}_{n,\kappa}(y) - \bar{g}(y)|$.
We first relate it to  $\|T_{m_{\kappa}} \widehat{\Phi}_{n,1/\kappa} - \Phi_X\|_{2, 1/h}$.

\begin{lemma}
\label{lemma:gamma}
Let $h>0$ and $m>0$. For any $A>0$,
\begin{equation*}
\Gamma_{n,\kappa} \leq I(A)  \frac{\| u_A * u_A \|_2}{h^{D/2}}
 \|T_{m_{\kappa}} \widehat{\Phi}_{n,1/\kappa} - \Phi_X\|_{2, 1/h} .
\end{equation*}
\end{lemma}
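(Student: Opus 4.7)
The plan is to use the Fourier/inverse-Fourier representation of both $\widehat g_{n,\kappa}$ and $\bar g$ to express their difference as an integral against $\mathcal{F}[\psi_A](h\cdot)$, then exploit the compactness of $\mathrm{supp}\,\mathcal{F}[\psi_A]$ and apply Cauchy--Schwarz. This is a short and mostly mechanical calculation; the ``hard part'' is only the bookkeeping of constants from Plancherel's identity in the paper's Fourier convention.

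First, by linearity of the integral,
\begin{equation*}
\widehat g_{n,\kappa}(y) - \bar g(y) = \Bigl(\frac{1}{2\pi}\Bigr)^{D} \int_{\Rbb^D} e^{-it^{\top}y}\,\Fcal[\psi_A](ht)\,\bigl[T_{m_\kappa}\widehat \Phi_{n,1/\kappa}(t) - \Phi_X(t)\bigr]\,dt.
\end{equation*}
By property (I) of $\psi_A$, the function $\Fcal[\psi_A]$ is supported on the closed unit ball, so $t \mapsto \Fcal[\psi_A](ht)$ vanishes outside the Euclidean ball of radius $1/h$, which is contained in $B_{1/h}^D = (-1/h, 1/h)^D$. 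Restricting the integral to $B_{1/h}^D$ and applying the triangle inequality followed by Cauchy--Schwarz yields, for every $y \in \Rbb^D$,
\begin{equation*}
|\widehat g_{n,\kappa}(y) - \bar g(y)| \leq \Bigl(\frac{1}{2\pi}\Bigr)^{D} \bigl\|\Fcal[\psi_A](h\cdot)\bigr\|_{2,1/h}\,\bigl\|T_{m_\kappa}\widehat \Phi_{n,1/\kappa} - \Phi_X\bigr\|_{2,1/h}.
\end{equation*}

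Next I would compute $\|\Fcal[\psi_A](h\cdot)\|_{2,1/h}$. Since the integrand is supported inside $B_{1/h}^D$, this equals $\|\Fcal[\psi_A](h\cdot)\|_{2}$ on all of $\Rbb^D$. The change of variables $u = ht$ gives $\|\Fcal[\psi_A](h\cdot)\|_{2}^{2} = h^{-D} \|\Fcal[\psi_A]\|_{2}^{2}$, and Plancherel's identity combined with property (VI) (applied with $h=1$, giving $\|\psi_A\|_2 = I(A)\|u_A*u_A\|_2$) then expresses $\|\Fcal[\psi_A]\|_2$ as a constant multiple of $I(A)\|u_A * u_A\|_2$. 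Collecting the resulting factor of $h^{-D/2}$ and absorbing the numerical prefactor coming from the $(1/(2\pi))^D$ and Plancherel's constant (which is at most $1$), we obtain
\begin{equation*}
|\widehat g_{n,\kappa}(y) - \bar g(y)| \leq I(A)\,\frac{\|u_A * u_A\|_{2}}{h^{D/2}}\,\bigl\|T_{m_\kappa}\widehat \Phi_{n,1/\kappa} - \Phi_X\bigr\|_{2,1/h}
\end{equation*}
uniformly in $y$. Taking the supremum over $y \in \Rbb^D$ gives $\Gamma_{n,\kappa}$ on the left-hand side and establishes the claim. The only subtlety is the constant tracking from Plancherel with the paper's asymmetric convention $\Fcal[f](y)=\int e^{it^\top y}f(t)\,dt$; the resulting factor of $(2\pi)^{D/2}$ combines with $(1/(2\pi))^D$ to produce $(2\pi)^{-D/2} \leq 1$, which is absorbed into the stated inequality.
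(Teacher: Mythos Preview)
Your proof is correct and essentially equivalent to the paper's. The paper rewrites the difference as a convolution $\psi_{A,h} * \Fcal^{-1}[(T_{m_\kappa}\widehat\Phi_{n,1/\kappa}-\Phi_X)\mathbf{1}_{\|t\|_2\le 1/h}]$, applies Young's inequality in the $(L^2,L^2,L^\infty)$ case, and then Parseval to convert the second factor back; you instead apply Cauchy--Schwarz directly to the Fourier integral and use Plancherel on the kernel factor. Since Young's $(2,2,\infty)$ inequality \emph{is} pointwise Cauchy--Schwarz, the two arguments are the same computation in different order, and both silently drop the same harmless factor $(2\pi)^{-D/2}\le 1$.
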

The proof of Lemma~\ref{lemma:gamma} is detailed in Section~\ref{sec_proof_lemmagamma}.
The parameters $m_{\kappa}$ and $h$ are chosen so that $\Gamma_{n,\kappa}$ tends to $0$ with high probability, and the  threshold  $\lambda_{n,\kappa}$ is chosen using Lemmas~\ref{lem_minoration_gbar} and~\ref{lemma:sup}.

\subsection{Adaptation to unknown \texorpdfstring{$\kappa$}{kappa}}
\label{subsec:adapt}

We now propose a data-driven model selection procedure to select $\kappa$ such that the resulting estimator has the right rate of convergence. As usual, the idea is to perform a bias-variance trade off. Although we have an upper bound for the variance term, the bias is not easily accessible. We will use Goldenshluger and Lepski's method, see~\cite{MR2543590}. The variance bound is given as follows:
\begin{equation*}
    \sigma_{n}(\kappa)=c_{\sigma}\frac{(\log\log n)^{\kappa+\frac{A+1}{A}}}{(\log n)^{\kappa}}.
\end{equation*}
Fix some $\kappa_0 >1/2$. The bias proxy is defined as
\begin{equation*}
    B_{n}(\kappa)=0\vee \sup_{\kappa'\in[\kappa_{0},\kappa]}\left(H_{\Kcal}(\widehat{\Mcal}_{\kappa},\widehat{\Mcal}_{\kappa'})-\sigma_{n}(\kappa')\right).
\end{equation*}
The estimator of $\kappa$ is now given by
\begin{equation*}
    \widehat{\kappa}_{n}\in \argmin \left\{B_{n}(\kappa)+\sigma_{n}(\kappa),\;\kappa\in[\kappa_{0},1]
\right\},
\end{equation*}
and the estimator of the support of the signal is $\widehat{\Mcal}_{\widehat{\kappa}_{n}}$. The following theorem states that this estimator is rate adaptive.

\begin{theorem}
\label{theo:adapt}
For any $\kappa_{0}\in (1/2,1]$,  $\nu\in(0,\nu_\text{est}]$, $c(\nu)>0$, $E>0$ $S>0$, $a>0$, $d\leq D$, there exists $c_{\sigma}>0$ such that
\begin{equation*}
    \limsup_{n\rightarrow +\infty}\sup_{\kappa\in[\kappa_{0},1]}
\underset{
\Qbb\in \Qcal^{(D)} (\nu,c({\nu}),E)}
{\sup_{G \in St_{\Kcal}(a,d)\cap \Lcal(\kappa,S,{\cal H})}} 
\frac{\log(n)^{\kappa}}{\log( \log(n))^{\kappa + \frac{A+1}{A}}}\Ebb_{(G * \Qbb)^{\otimes n}}[H_{\Kcal}(\Mcal_{G},\widehat{\Mcal}_{\widehat{\kappa}_{n}})] <+\infty .
\end{equation*}
\end{theorem}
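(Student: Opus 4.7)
The plan is to apply the Goldenshluger--Lepski (GL) model selection scheme in its standard form. The starting observation is that if $G \in \Lcal(\kappa^*, S, \Hcal)$ for some true $\kappa^* \in [\kappa_{0}, 1]$, then the coefficient constraint defining $\Upsilon_{1/\kappa, S}$ is weaker for larger $\rho = 1/\kappa$, so $G \in \Lcal(\kappa', S, \Hcal)$ for every $\kappa' \in [\kappa_{0}, \kappa^*]$. Consequently Theorem~\ref{theorem:rateH} delivers the same type of guarantee for each estimator $\widehat{\Mcal}_{\kappa'}$ with $\kappa'$ in this range, and all the work reduces to showing that the data-driven $\widehat{\kappa}_n$ behaves like the oracle choice $\kappa^*$, which corresponds to the smallest variance proxy $\sigma_n(\kappa^*)$.

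The first step I would take is to extract from the proof of Theorem~\ref{theorem:rateH} a uniform high-probability version. Applying Proposition~\ref{prop:phihat} with $\rho = 1/\kappa^*$, $\rho_{0} = 1/\kappa_{0} < 2$ and $x = \log n$, and then threading the resulting bound through the deterministic Lemmas~\ref{lem_minoration_gbar}, \ref{lemma:sup} and~\ref{lemma:gamma}, I would obtain an event $\Omega_n$ with $\Pbb(\Omega_n^c) \leq 2/n$ on which, simultaneously for all $\kappa' \in [\kappa_{0}, \kappa^*]$,
\begin{equation*}
H_{\Kcal}(\Mcal_G, \widehat{\Mcal}_{\kappa'}) \leq \sigma_n(\kappa'),
\end{equation*}
provided $c_{\sigma}$ is chosen large enough to absorb the constant $C$ appearing in Theorem~\ref{theorem:rateH}. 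On $\Omega_n$, the triangle inequality yields $H_{\Kcal}(\widehat{\Mcal}_{\kappa^*}, \widehat{\Mcal}_{\kappa'}) - \sigma_n(\kappa') \leq \sigma_n(\kappa^*)$ for every $\kappa' \in [\kappa_{0}, \kappa^*]$, hence $B_n(\kappa^*) \leq \sigma_n(\kappa^*)$, and the minimizing property of $\widehat{\kappa}_n$ gives
\begin{equation*}
B_n(\widehat{\kappa}_n) + \sigma_n(\widehat{\kappa}_n) \leq B_n(\kappa^*) + \sigma_n(\kappa^*) \leq 2\sigma_n(\kappa^*),
\end{equation*}
so both $B_n(\widehat{\kappa}_n)$ and $\sigma_n(\widehat{\kappa}_n)$ are bounded by $2\sigma_n(\kappa^*)$. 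A split in two cases then controls $H_{\Kcal}(\widehat{\Mcal}_{\kappa^*}, \widehat{\Mcal}_{\widehat{\kappa}_n})$: if $\widehat{\kappa}_n \geq \kappa^*$, the definition of $B_n(\widehat{\kappa}_n)$ bounds it by $B_n(\widehat{\kappa}_n) + \sigma_n(\kappa^*) \leq 3\sigma_n(\kappa^*)$; if $\widehat{\kappa}_n < \kappa^*$, the definition of $B_n(\kappa^*)$ bounds it by $B_n(\kappa^*) + \sigma_n(\widehat{\kappa}_n) \leq 3\sigma_n(\kappa^*)$. A final triangle inequality with $H_{\Kcal}(\Mcal_G, \widehat{\Mcal}_{\kappa^*}) \leq \sigma_n(\kappa^*)$ yields $H_{\Kcal}(\Mcal_G, \widehat{\Mcal}_{\widehat{\kappa}_n}) \leq 4\sigma_n(\kappa^*)$ on $\Omega_n$. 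Off $\Omega_n$, $H_{\Kcal}$ is bounded by $\Diam(\Kcal) < \infty$, so the complement contributes at most $2 \Diam(\Kcal)/n$ to the expectation, which is negligible compared to $\sigma_n(\kappa^*)$; dividing by $(\log \log n)^{\kappa^* + (A+1)/A}/(\log n)^{\kappa^*}$ and taking the supremum over the class delivers the claimed limsup.

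The main obstacle I expect lies in the first step: Theorem~\ref{theorem:rateH} is stated only in expectation, whereas the GL argument genuinely requires a high-probability bound that holds simultaneously over the continuum $\kappa' \in [\kappa_{0}, \kappa^*]$ and uniformly over the class. The uniformity in $\rho' = 1/\kappa'$ is precisely what Proposition~\ref{prop:phihat} provides at the level of the characteristic function, but one must still transfer it through the extra truncation at varying degree $m_{\kappa'}$, the kernel smoothing $\widehat{g}_{n,\kappa'}$, and the level-set construction, each of which introduces $\kappa'$-dependent quantities (in particular $h = c_h S m_{\kappa'}^{-\kappa'}$ and $\lambda_{n,\kappa'}$). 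Checking that a single favorable event $\Omega_n$ makes all $\widehat{\Mcal}_{\kappa'}$, $\kappa' \in [\kappa_{0}, \kappa^*]$, simultaneously accurate is a bookkeeping task rather than a conceptual one, but it is the step that really drives the proof.
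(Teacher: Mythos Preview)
Your proposal is correct and essentially identical to the paper's proof: both start from the uniform-in-$\kappa'$ high-probability bound (equation~\eqref{probUnif} in the proof of Theorem~\ref{theorem:rateH}, which you correctly anticipate must be extracted via Proposition~\ref{prop:phihat}), choose $c_\sigma$ accordingly, and then run the standard Goldenshluger--Lepski triangle-inequality argument with the same case split on $\widehat{\kappa}_n$ versus $\kappa^*$. The only cosmetic difference is that the paper merges the two cases into a single inequality and lands on $5\sigma_n(\kappa)$ instead of your $4\sigma_n(\kappa^*)$.
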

The proof of Theorem~\ref{theo:adapt} is detailed in Section~\ref{proof:theo:adapt}.

\subsection{Lower bound}
\label{subsec:Mlower}

The aim of this subsection is to prove a lower bound for the minimax risk of the estimation of $\Mcal_{G}$ using  the distance $H_{\Kcal}$ as loss function. The proof of Theorem~\ref{thm:lower} is based on Le Cam’s two-points method, see~\cite{MR1462963}, one of the most widespread technique to derive lower bounds. Note that we can not use the lower bound proved in~\cite{GPVW12} since the two distributions they use for the signal $X$ in their two-points proof  have Gaussian tails, for which $\kappa=1/2$. 

\begin{theorem}
\label{thm:lower}
For any $\kappa \in (1/2,1)$, there exists $S_{\kappa}>0$, $a_{\kappa}>0$ and $\Hcal^{\star}_{\kappa}$ a set of complex functions satisfying (Adep) such that the set of the restrictions of its elements to $[-\nu,\nu]^D$ is closed in $L_2([-\nu,\nu]^D)$ for any $\nu>0$, and
such that for all $S\geq S_{\kappa}$, $a\leq a_{\kappa}$, $d \geq 1$, $0 < r_0 < 1$, $E>0$ and $\nu\in(0,\nuest]$ such that $c(\nu)>0$, 
there exists $C>0$ depending only on $a$, $D$, $S$, $E$ and $\nu$, and there exists $n_0$, such that for all $n \geq n_0$,
\begin{equation}
\label{eq:infkappa}
\inf_{\widehat{\Mcal}} \ 
    \underset{\Qbb\in \Qcal^{(D)} (\nu,c({\nu}),E)}
        {\sup_{G \in St_{\Kcal}(a,d,r_0)\cap \Lcal(\kappa,S,{\cal H}^{\star}_{\kappa})}}
    \Ebb_{(G * \Qbb)^{\otimes n}}[H_{\Kcal}(\Mcal_G,\widehat{\Mcal})]
        \geq \frac{C}{\log(n)^{\kappa}},
\end{equation}
and for any $\delta \in (0,1)$, there exists $C>0$ depending only on $a$, $D$, $S$, $E$, $\nu$ and $\delta$, and there exists $n_0$, such that for all $n \geq n_0$,
\begin{equation}
\label{eq:inf1}
\inf_{\widehat{\Mcal}} \ 
    \underset{\Qbb\in \Qcal^{(D)} (\nu,c({\nu}),E)}
        {\sup_{G \in St_{\Kcal}(a,d,r_0)\cap \Lcal(1,S,{\cal H}^{\star}_{1})}}
    \Ebb_{(G * \Qbb)^{\otimes n}}[H_{\Kcal}(\Mcal_G,\widehat{\Mcal})]
        \geq \frac{C}{\log(n)^{1 + \delta}},
\end{equation}
where the infimum in \eqref{eq:infkappa} and \eqref{eq:inf1} is taken over all possible estimators $\widehat{\Mcal}$ of $\Mcal_G$.
\end{theorem}

\begin{remark}
\begin{itemize}
\item The lower bound in Theorem~\ref{thm:lower} almost matches the upper bound for the maximum risk of our estimator in Theorem~\ref{theorem:rateH}. Thus our work identifies the main factor in the minimax rate for the estimation of the support in Hausdorff loss. Notice that \eqref{eq:inf1} is weaker than \eqref{eq:infkappa} with $\kappa=1$, but we were not able to find a $f_{\kappa}$suitable to prove \eqref{eq:infkappa} with $\kappa=1$, see Lemma \ref{lemma:fourierbound} below.

\item In~\cite{GPVW12}, the lower bound does not match the upper bound by a larger power in the rate (almost twice). 

\item The sets of supports we consider are not the same as that considered in~\cite{GPVW12}. In~\cite{GPVW12}, the authors assume that the support is a regular manifold with lower bounded reach. We do not assume regularity, we only assume that the distribution of the signal is $(a,d)$-standard.
\end{itemize}
\end{remark}
The proof of Theorem~\ref{thm:lower} is detailed in Section~\ref{proof:lower}.
As usual for the two-points method, the idea is to find two distributions having support as far as possible in $H_{\Kcal}$-distance, and a noise such that the joint distributions of the observations have total variation distance upper bounded by some $C<1$. 
We shall consider the noise as in~\cite{LCGL2021}, with independent identically distributed coordinates having density $q$ defined as
\begin{equation*}
q: x \in \Rbb \longmapsto c_q \frac{1+\cos(cx)}{(\pi^2-(cx)^2)^2}
\end{equation*}
for some $c>0$, where $c_q$ is such that $q$ is a probability density, and with characteristic function 
\begin{equation*}
\Fcal[q]:t \mapsto c_q\left[\left(1-\left| \frac{t}{c}\right|\right)\cos \left(\pi \frac{t}{c}\right) + \frac{1}{\pi} \sin \left(\pi \left| \frac{t}{c}\right|\right)\right]\mathbf{1}_{-c\leq t \leq c}.
\end{equation*}
Let us now define the two distributions to apply the two-points method.
For any $\kappa \in (1/2,1]$, we first choose a density function $f_{\kappa}$ according to the following Lemma.

\begin{lemma}
\label{lemma:fourierbound}
For any $\kappa \in (1/2,1)$, $p \geq 1$, there exists a continuous density function $f_{\kappa}: \Rbb \rightarrow \Rbb$ in $L_{p}(\Rbb)$, positive everywhere, and positive constants $A,B$ such that for all $u \in \Rbb$,
\begin{equation*}
    |\Fcal[f_{\kappa}](u)| \leq A \exp(-B|u|^{\frac{1}{\kappa}}) \quad \text{and} \quad |\Fcal[f_{\kappa}]'(u)| \leq A \exp(-B|u|^{\frac{1}{\kappa}}).
\end{equation*}
For any $\delta \in (0,1)$, there exists a continuous compactly supported density function $f_1: [-1,1] \rightarrow \Rbb$ positive everywhere such that
\begin{equation*}
    |\Fcal[f_{1}](u)| \leq A \exp(-B|u|^{\delta}) \quad \text{and} \quad |\Fcal[f_{1}]'(u)| \leq A \exp(-B|u|^{\delta}).
\end{equation*}
\end{lemma}

The proof of Lemma~\ref{lemma:fourierbound} is detailed in Section~\ref{proof:fourierbound}.

Then, inspired by~\cite{GPVW12}, for all $\gamma \in (0,1]$, define $\tilde{g}_{\gamma}: \Rbb \rightarrow \Rbb$ and $g_{\gamma}: \Rbb \rightarrow \Rbb^{D-d}$ for all $x \in \Rbb$ as
\begin{equation*}
\tilde{g}_{\gamma}(x) = \cos\left(\frac{x}{\gamma}\right)
\quad \text{and} \quad
g_{\gamma}(x) = \left ( \tilde{g}_{\gamma}(x),0, \dots, 0 \right )^\top.
\end{equation*}
Let $M_0(\gamma) = \{ (u,\gamma g_{\gamma}(u)): u \in \Rbb \}$,  $M_1(\gamma) = \{ (u, -\gamma g_{\gamma}(u)): u \in \Rbb \}$, and let $J \in \Rbb^{D \times D}$ be the matrix
\begin{equation*}
    J = \left(
    \begin{array}{cc|ccc}
      1 & 0 & 0 & \dots &  0 \\
      1 & 1/2 & 0 &\dots & 0 \\
      \hline
      0 & 0 & 1 & \dots & 0 \\
      \vdots & \vdots & \vdots & \ddots & \vdots\\
      0 &  0 & 0 & \dots & 1 \\
      \end{array}
      \right ).
\end{equation*}

For any $\kappa \in (1/2,1]$, let $U(\kappa)$ be the random variable in $\Rbb$ having density $f_{\kappa}$ defined in Lemma~\ref{lemma:fourierbound} and let $S_0(\kappa) = (U(\kappa),\gamma g_{\gamma}(U(\kappa)))$, $S_1(\kappa) = (U(\kappa),-\gamma g_{\gamma}(U(\kappa)))$. For $i \in \{0,1\}$, we shall denote $T_i(\kappa)$  the distribution of $S_i(\kappa)$. Finally we define $X_i(\kappa) = J S_i(\kappa)$, $i=0,1$ and $G_i(\kappa)$ the distribution of $X_i(\kappa)$. When $\kappa < 1$, the supports of $S_i(\kappa)$ and $X_i(\kappa)$ do not depend on $\kappa$. They are illustrated in Figure~\ref{fig_support_lower}. The transformation $J$ is such that the the support of $X_i$ is the graph of a bijective function, which makes it easy to satisfy (Adep) through for instance assumptions (H1), (H2) and Theorem~\ref{thm:id} from Section~\ref{subsec:geocondi}.

\begin{figure}
    \centering
    \includegraphics[scale=.65]{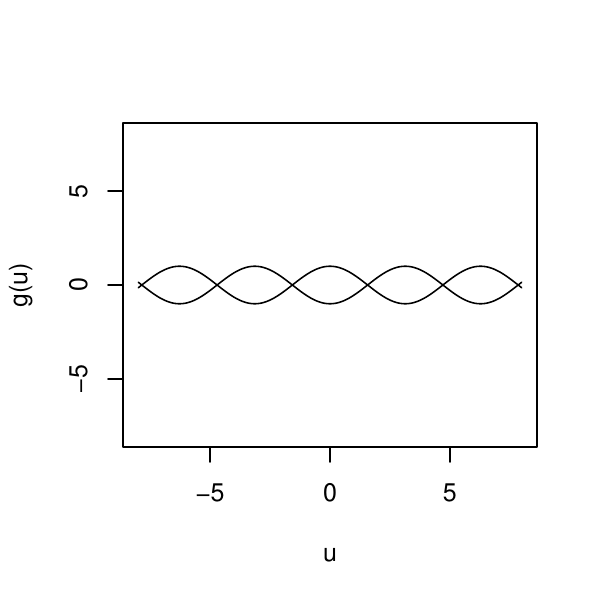}
    \includegraphics[scale=.65]{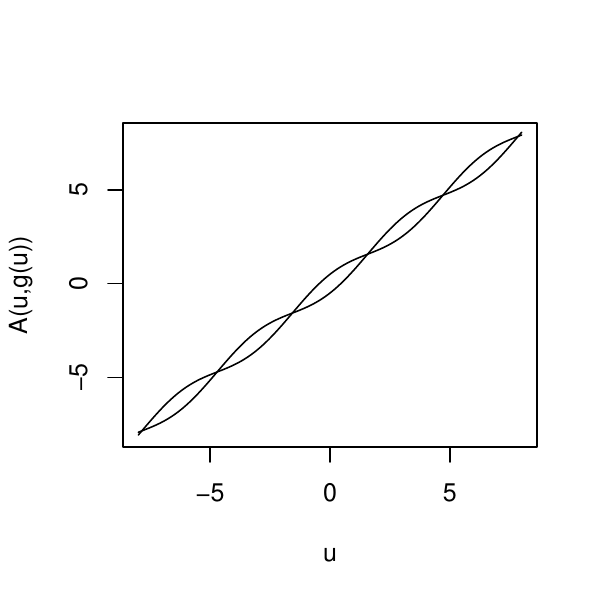}
    \caption{Left: support of $S_0$ and $S_1$ when $\gamma=1$. Right: support of $X_0$ and $X_1$ when $\gamma = 1$.}
    \label{fig_support_lower}
\end{figure}

\begin{lemma}
\label{lemma:A1/kappa}
For any $i \in \{0,1\}$ and $\kappa \in (1/2,1]$, $X_i(\kappa)$ satisfies A($1/\kappa$).
\end{lemma}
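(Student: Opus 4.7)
The strategy has three steps: reduce from $X_i(\kappa)$ to $S_i(\kappa)$ by the linear invariance of A($1/\kappa$), exploit the boundedness of all but the first coordinate of $S_i(\kappa)$ to factor the Laplace transform, and then bound the one-dimensional Laplace transform of $U(\kappa)$.

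First, I would observe that $A_\alpha$ is block-diagonal with respect to the partition of coordinates $(1,2)$ and $(3,\ldots,D)$: its upper-left block $M=\begin{pmatrix}\alpha & 0\\ \alpha & \alpha/2\end{pmatrix}$ and its lower-right block $\alpha I_{D-2}$ are both invertible since $\alpha\neq 0$. Applying Proposition~\ref{prop:(Adep)prop}(ii) with $(d_1,d_2)=(2,D-2)$, $A=M$, $B=\alpha I_{D-2}$ and $m_1=m_2=0$ then reduces the claim to showing that $S_i(\kappa)$ satisfies A($1/\kappa$).

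Second, since only the first two coordinates of $S_i(\kappa)=(U(\kappa),\pm\gamma\cos(U(\kappa)/\gamma),0,\ldots,0)^\top$ are non-zero and the second one is bounded in absolute value by $\gamma$, for any $\lambda=(\lambda_1,\ldots,\lambda_D)\in\Rbb^D$,
\begin{equation*}
\Ebb\bigl[e^{\lambda^\top S_i(\kappa)}\bigr]=\Ebb\bigl[\exp(\lambda_1 U(\kappa)\pm\gamma\lambda_2\cos(U(\kappa)/\gamma))\bigr]\leq e^{\gamma\|\lambda\|_2}\,\Ebb\bigl[e^{\lambda_1 U(\kappa)}\bigr].
\end{equation*}
It therefore suffices to prove $\Ebb[e^{sU(\kappa)}]\leq a\exp(b|s|^{1/\kappa})$ for some constants $a,b>0$ independent of $s\in\Rbb$.

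Third, for $\kappa=1$, Lemma~\ref{lemma:fourierbound} provides $f_1$ supported in $[-1,1]$, so $|U(1)|\leq 1$ a.s.\ and $\Ebb[e^{sU(1)}]\leq e^{|s|}$, which together with the previous step yields A(1) for $X_i(1)$. For $\kappa\in(1/2,1)$, I would use that the density $f_\kappa$ from Lemma~\ref{lemma:fourierbound} can be chosen in its explicit construction so as to also satisfy the tail bound $\Pbb(|U(\kappa)|\geq t)\leq c\exp(-dt^{1/(1-\kappa)})$. Since $(1/\kappa)/(1/\kappa-1)=1/(1-\kappa)$, Proposition~\ref{prop:A(rho)} then yields A($1/\kappa$) for $U(\kappa)$, which combined with the previous step concludes the proof for $S_i(\kappa)$ and hence for $X_i(\kappa)$.

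The main obstacle is the third step for $\kappa\in(1/2,1)$: as stated, Lemma~\ref{lemma:fourierbound} only provides Fourier-side decay of $\Fcal[f_\kappa]$, which in itself does not control the tails of $f_\kappa$. One therefore needs to commit to a specific construction of $f_\kappa$ for which both the Fourier decay $|\Fcal[f_\kappa](t)|\leq A\exp(-B|t|^{1/\kappa})$ and the tail estimate $f_\kappa(u)\leq c\exp(-d|u|^{1/(1-\kappa)})$ hold simultaneously; the natural candidate $f_\kappa(u)\propto\exp(-c|u|^{1/(1-\kappa)})$ works, the Fourier bound then following from a Laplace/stationary-phase estimate. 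Alternatively, one could work directly with the characterisation of A($1/\kappa$) through membership in $\Upsilon_{1/\kappa,S}$ recalled in Section~\ref{subsec:chara} (equivalently, Lemma~3.1 of~\cite{LCGL2021}), extending $\Phi_{U(\kappa)}$ analytically using the Fourier decay and applying Cauchy estimates to control its Taylor coefficients at the origin.
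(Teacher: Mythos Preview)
Your reduction steps are essentially the paper's own argument. The paper does not invoke Proposition~\ref{prop:(Adep)prop}(ii) but computes directly
\[
\Ebb\bigl[e^{\lambda^\top X_i(\kappa)}\bigr]
=\Ebb\Bigl[\exp\Bigl((\alpha\lambda_1+\tfrac{\alpha}{2}\lambda_2)U(\kappa)+(-1)^i\tfrac{\alpha}{2}\gamma\lambda_2\cos\bigl(U(\kappa)/\gamma\bigr)\Bigr)\Bigr]
\le e^{\frac{\alpha}{2}\gamma|\lambda_2|}\,\Ebb\bigl[e^{(\alpha\lambda_1+\frac{\alpha}{2}\lambda_2)U(\kappa)}\bigr],
\]
which is your second step combined with the linear change of argument you packaged into Proposition~\ref{prop:(Adep)prop}(ii). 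So on the structural level the two proofs coincide.

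Where the comparison becomes interesting is your third step. The paper's proof simply asserts ``by Lemma~\ref{lemma:fourierbound}, $U(\kappa)$ satisfies A($1/\kappa$)'' and moves on. You are right that, \emph{as stated}, Lemma~\ref{lemma:fourierbound} only gives decay of $\Fcal[f_\kappa]$ on the real line, and real-axis Fourier decay alone cannot imply A($1/\kappa$): the Cauchy density has $|\Fcal[f](t)|=e^{-|t|}$ yet fails A($\rho$) for every $\rho$. So you have spotted a genuine elision in the paper's argument. The bridge exists, but it lies in the \emph{construction} of $f_\kappa$ in the proof of Lemma~\ref{lemma:fourierbound}, not in its statement: the saddle-point analysis of the function $\zeta_{q,\mu,a}$ from~\cite{MR1574180} gives $\zeta_{q,\mu,a}(x)\lesssim\exp(-c|x|^{2q/(q-1)})$, and with $q=1/(2\kappa-1)$ this exponent is exactly $1/(1-\kappa)$, so $f_\kappa=c\,\mathrm{Re}[\zeta]^2*u_1$ inherits the tail $\exp(-c|x|^{1/(1-\kappa)})$ and Proposition~\ref{prop:A(rho)} yields A($1/\kappa$). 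Your proposed fix via an explicit tail bound is therefore the correct one, and your candidate $f_\kappa\propto\exp(-c|u|^{1/(1-\kappa)})$ is the right profile (its Fourier transform does decay like $\exp(-c|t|^{1/\kappa})$ by Laplace's method, since $p/(p-1)=1/\kappa$ when $p=1/(1-\kappa)$). Your alternative route through $\Upsilon_{1/\kappa,S}$ and Cauchy estimates would also work but still requires analytic continuation of $\Phi_{U(\kappa)}$ off the real axis, which again comes back to a tail bound on $f_\kappa$.

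In short: your proof matches the paper's, and the obstacle you identified is real and also present in the paper's own write-up; the resolution is to appeal to the tail behaviour implicit in the specific construction of $f_\kappa$.
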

The proof of Lemma~\ref{lemma:A1/kappa} is detailed in Section~\ref{proof:lemma:A1/kappa}.
\begin{lemma}
\label{lemma:standard}
There exists $a_{0}>0$ such that for $i \in \{0,1\}$, for any $d \geq 1$, $r_0 < 1$ and $a \leq a_0$, $G_i(\kappa) \in St_{\Kcal}(a,d,r_0)$.
\end{lemma}
The proof of Lemma~\ref{lemma:standard} is detailed in Section \ref{proof:lemma:standard}.
\begin{lemma}
\label{lemma:GiA}
For any $i \in \lbrace 0,1 \rbrace$ and $\kappa \in (1/2,1]$, $\Phi_{X_{i}(\kappa)}$ satisfy (Adep)
\end{lemma}
The proof of Lemma~\ref{lemma:GiA} is detailed in Section~\ref{proof:lemma:GiA}.

We finally set $\Hcal^{\star}_{\kappa} = \lbrace \Phi_{X_{0}(\kappa)}, \Phi_{X_{1}(\kappa)} \rbrace$. As a finite set, it is closed in $L_2([-\nu, \nu]^D)$.

Let us comment on dimensionnality.
The distributions used here are distributions with support of dimension $1$. This is not an issue since the $d$ in the definition of $St_{\Kcal}(a,d,r_0)$ is an upper bound on the dimension of the support. We could also have used supports with dimension $d$ by adding to $X_{i}(\kappa)$ an independent uniform distribution on a Euclidean ball of a linear space of dimension $d$.

The support of $G_{i}(\kappa)$ is $J M_i(\gamma)$, and the following lemma follows easily from the fact that for $i\in\{0,1\}$, $J M_{i}(\gamma) = \gamma J M_{i}(1)$.
\begin{lemma}
\label{lemma:lowerboundH}
For any $\gamma > 0$,
\begin{equation}
    H_{\Kcal}(J M_0(\gamma),J M_1(\gamma)) = \gamma H_{\Kcal}(J M_0(1), J M_1(1)).
\end{equation}
\end{lemma}
To obtain the lower bound, the parameter $\gamma$ will be chosen as large as possible while making sure that the joint distributions of the observations have total variation distance smaller than some $C < 1$.

\section{Estimation of the distribution of the signal}
\label{sec:Wasserstein}
In this section, we assume that the support $\Mcal_{G}$ of $G$ is a compact subset of $\Rbb^D$. For any $\eta>0$ and $A \subset \Rbb^D$, $A_{\eta}$ will denote the $\eta$-offset of $A$, that is the set of all points $x$ in $\Rbb^D$ such that $d(x,A) \leq \eta$.

To estimate $G$, we shall consider the probability density $\bar{g}$ defined in Section~\ref{subsec:Mupper} and define the probability distribution $P_{\psi_{A,h_n}}$ on $\Rbb^D$ such that, for any $\Ocal$ borelian set of $\Rbb^D$,
\begin{equation*}
P_{\psi_{A,h_n}}(\Ocal) = \int_{\Ocal} \bar{g}(y) dy.
\end{equation*}
We then estimate $P_{\psi_{A,h_n}}$ using the estimation of $\bar{g}$ defined in Section~\ref{subsec:Mupper} for $\kappa=1$,  $\widehat{g}_n:=\widehat{g}_{n,1}$.
Since $\widehat{g}_n$ can be non positive, we use $\widehat{g}_n^{+} = \max{ \{ 0,\widehat{g}_n \} }$ and renormalize it to get a probability distribution. We shall also estimate $P_{\psi_{A,h_n}}$ with a probability distribution having support on a (small) 
offset of the estimated support $\widehat{\Mcal}$ restricted to the closed euclidean ball $\bar{B}(0,R_n)$, for some radius $R_n$ that grows to infinity with $n$. Thus we fix some $\eta> 0$ and define $\widehat{P}_{n, \eta}$ such that, for any $\Ocal$ borelian set of $\Rbb^D$,
\begin{equation*}
\widehat{P}_{n,\eta}(\Ocal)
    = \frac{1}{\int_{(\widehat{\Mcal} \cap \bar{B}(0,R_n))_{{\eta}}} \widehat{g}_n^{+}(y) dy} \int_{\Ocal \cap (\widehat{\Mcal} \cap \bar{B}(0,R_n))_{{\eta}}} \widehat{g}_n^{+}(y) dy
    = c_n \int_{\Ocal \cap (\widehat{\Mcal} \cap \bar{B}(0,R_n))_{{\eta}}} \widehat{g}_n^{+}(y) dy.
\end{equation*}

\subsection{Upper bound for the Wasserstein risk}
\label{sec_Wasserstein_upper}

The aim of this subsection is to give  an upper bound of the Wasserstein maximum risk  for the estimation of $G$. For any $p \in [1,+\infty)$ and any two probability measures $\mu$ and $\nu$ on $\Rbb^D$, we write $W_p(\mu,\nu)$ the Wasserstein distance of order $p$ between $\mu$ and $\nu$, that is
\begin{equation*}
W_p(\mu,\nu) = \inf_{\pi \in \Pi(\mu,\nu)} \left ( \int_{\Rbb^D \times \Rbb^D} \|x-y\|_{2}^p d\pi(x,y) \right )^{1/p},
\end{equation*}
where $\Pi(\mu,\nu)$ is the set of probability measures on $\Rbb^D \times \Rbb^D$ that have marginals $\mu$ and $\nu$.
\begin{theorem}
\label{theorem:upperboundwasserstein} 
For all $\nu\in(0,\nu_\text{est}]$, $c(\nu)>0$, $E>0$, $S>0$, $\eta>0$, $a>0$ , $r_0>0$, $d \leq D$, $p \in [1,+\infty)$, define $m_n$, $h_n$ and $\lambda_n$ as in Theorem~\ref{theorem:rateH} for $\kappa=1$.
Assume that $\lim_{n \rightarrow +\infty} R_n = +\infty$ and that there exists $\delta \in (0,\frac{1}{2})$ such that $R_n \leq \exp(n^{1/2 - \delta})$.
Then there exist $n_0$ and $C > 0$ such that for all $n \geq n_0$,
\begin{equation*}
\underset{\Qbb\in \Qcal^{(D)} (\nu,c({\nu}),E)}
        {\sup_{G \in St_{\Kcal}(a,d,r_0)\cap \Lcal(1,S,{\cal H})}}
    \Ebb_{(G * \Qbb)^{\otimes n}}[W_p(G,\widehat{P}_{n,\eta})]
        \leq C \frac{\log \log(n)}{\log(n)}.
\end{equation*}
\end{theorem}
The proof of Theorem~\ref{theorem:upperboundwasserstein} is detailed in Section~\ref{proof:theorem:upperboundwasserstein}.
Note that the magnitude of $\eta$ does not appear to be crucial when looking at the proof, at least in an asymptotic perspective.
Considering an offset of size $\eta$ of $\widehat{\Mcal}$ is meant to control the variance of the estimator $\widehat{P}_{n,\eta}$.

\begin{remark}
\begin{itemize}
\item The lower bound in Theorem~\ref{theorem:lowerboundwasserstein} almost matches the upper bound for the maximum risk of our estimator in Theorem~\ref{theorem:upperboundwasserstein}. Thus our work identifies the main factor in the minimax rate for the estimation of the distribution in Wasserstein loss.

\item Comparison with earlier results in the deconvolution setting~\cite{MR3189324} or~\cite{MR3314482} with known noise is not easy since the classes of signals they consider is much different than the ones we consider.
\end{itemize}
\end{remark}

\subsection{Lower bound for the Wasserstein risk}

The aim of this subsection is to establish a lower bound for the minimax Wasserstein risk of order $p$ for any $p \geq 1$. Again, we can not use previous lower bounds proved in
\cite{MR3189324} or~\cite{MR3314482} since they use in the two-points method signals with distributions having too heavy tails.

\begin{theorem}
\label{theorem:lowerboundwasserstein}
For any $p \geq 1$, there exists $S_{1}>0$, $a_{1}>0$ and $\Hcal^{\star}_{1}$ a set of complex functions satisfying (Adep) such that the set of the restrictions of its elements  to $[-\nu,\nu]^D$ is closed in $L_2([-\nu,\nu]^D)$ for any $\nu>0$,
and such that for all $S\geq S_{1}$, $a\leq a_{1}$, $d \geq 1$, $0 < r_0 < 1$, $E>0$ and $\nu\in(0,\nuest]$ such that $c(\nu)>0$, 
there exists $C>0$ depending only on $a$, $D$, $S$, $E$ and $\nu$, and there exists $n_0$, such that for all $n \geq n_0$,
\begin{equation*}
\inf_{\widehat{P}_n} \ \underset{\Qbb\in \Qcal^{(D)} (\nu,c({\nu}),E)}
        {\sup_{G \in St_{\Kcal}(a,d,r_0) \cap \Lcal(1,S,{\cal H}^{\star}_{1})}}
    \Ebb_{(G * \Qbb)^{\otimes n}}[W_p(G,\widehat{P}_n)]
        \geq C \frac{1}{\log(n)^{1+\delta}},
\end{equation*}
where the infimum is taken on all possible estimate $\widehat{P}_n$ of $G$.
\end{theorem}

As for Theorem~\ref{thm:lower}, we use Le Cam's two-points method with the same two distributions $G_0(1)$ and $G_1(1)$. The proof essentially consists in showing that there exists a constant $C>0$ independent of $\gamma$ such that $W_p(G_0(1), G_1(1)) \geq C H_{\Kcal}(M_0(\gamma), M_1(\gamma))$, that is $W_p(G_0(1), G_1(1)) \geq C \gamma$ for a constant $C>0$. Once such an equality is established, the lower bound follows from taking $\gamma$ as for Theorem~\ref{thm:lower}.

The rest of the proof is detailed in Section~\ref{proof:lowerboundW}.

\section{Sufficient geometrical conditions for (Adep) to hold}
\label{subsec:geocondi}
Throughout the paper, we assume (Adep) to be able to solve the deconvolution problem. In this section, we provide very general conditions on the support of a random variable which are sufficient for (Adep) to hold.
In what follows we shall say that a random variable satisfies (Adep) when its characteristic function does. 
We first provide simple but useful  properties. For any dimension $d > 0$, we denote $GL_{d}(\mathbb{C})$ the general linear group with coefficient in $\mathbb{C}$.

\begin{proposition}
\label{prop:(Adep)prop}
The following holds.
\begin{itemize}
    \item[(i)]   Let $U$ and $V$ independent random variables satisfying A($\rho$). Then $U$ and $V$ satisfy (Adep) if and only if $U+V$ satisfies (Adep).
    \item[(ii)]  Let $U = \begin{pmatrix} U^{(1)}\\U^{(2)}\end{pmatrix}$ be a random variable such that $U^{(1)}\in\Rbb^{d_{1}}$ and $U^{(2)}\in\Rbb^{d_{2}}$. Let  $A \in GL_{d_{1}}(\Cbb)$, $B \in GL_{d_{2}}(\Cbb)$, $m_1 \in \Cbb^{d_{1}}$ and $m_2 \in \Cbb^{d_{2}}$. Define $V = \begin{pmatrix} A & 0 \\ 0 & B \end{pmatrix} \begin{pmatrix} U^{(1)}\\ U^{(2)}\end{pmatrix} + \begin{pmatrix} m_1\\m_2 \end{pmatrix}$. Then $U$ satisfies A($\rho$) if and only if $V$ satisfies A($\rho$). Moreover, $U$ satisfies A($\rho$) and (Adep) if and only if $V$ satisfies A($\rho$) and (Adep).
    
    \item[(iii)] Let $U^{(1)}$ and $U^{(2)}$ be two independent random variables in $\Rbb^{d_1}$ and $\Rbb^{d_2}$ respectively that satisfy A($\rho$) for some $\rho \geq 1$, then $U = \begin{pmatrix} U^{(1)}\\U^{(2)}\end{pmatrix}$ satisfies (Adep) if and only if $U^{(1)}$ and $U^{(2)}$ are Gaussian or Dirac random variables.
\end{itemize}
\end{proposition}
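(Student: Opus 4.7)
All three parts rest on the fact that under A($\rho$), the characteristic function extends to an entire function on $\Cbb^D$ of finite order at most $\rho$. For part (i), I would use the multiplicativity $\Phi_{U+V} = \Phi_U \cdot \Phi_V$ of entire functions. Fixing $z_0 \in \Cbb^{d_1}$, the slice $z \mapsto \Phi_{U+V}(z_0,z)$ is the product of the entire functions $z \mapsto \Phi_U(z_0,z)$ and $z \mapsto \Phi_V(z_0,z)$ on $\Cbb^{d_2}$; since the ring of entire functions on a connected open set is an integral domain, this product vanishes identically if and only if one of the factors does. This equates the first condition of (Adep) for $U+V$ with its conjunction for $U$ and $V$, and the second condition is symmetric.

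For part (ii), the identity $\Phi_V(z_1,z_2) = e^{i m_1^\top z_1 + i m_2^\top z_2}\, \Phi_U(A^\top z_1, B^\top z_2)$, valid on all of $\Cbb^D$ by analytic extension, drives everything. The bound A($\rho$) transfers by evaluating at purely imaginary arguments and using $\|(A^\top \lambda^{(1)}, B^\top \lambda^{(2)})\|_{2} \leq \max(\|A\|_{op},\|B\|_{op})\,\|\lambda\|_{2}$ together with the fact that the linear factor $|e^{\lambda^\top m}|$ is absorbed by $\exp(b'\|\lambda\|^\rho)$ for $\rho \geq 1$; the reverse direction is identical after replacing $A,B,m$ by $A^{-1},B^{-1},-(A^{-1}m_1,B^{-1}m_2)$. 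For (Adep), the exponential factor is nowhere zero and the map $(z_1,z_2) \mapsto (A^\top z_1, B^\top z_2)$ is a bijection of $\Cbb^{d_1} \times \Cbb^{d_2}$ preserving the block decomposition, so (Adep) for $V$ is equivalent to (Adep) for $U$.

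For part (iii), by independence $\Phi_X(z_1,z_2) = \Phi_{U^{(1)}}(z_1)\Phi_{U^{(2)}}(z_2)$, and since $\Phi_{U^{(2)}}(0)=1$ the entire function $\Phi_{U^{(2)}}$ is not identically zero. Thus (Adep) holds if and only if each $\Phi_{U^{(i)}}$ is zero-free on all of $\Cbb^{d_i}$. The reverse direction is then immediate from $\Phi_{\mathrm{Gaussian}}(z) = \exp(im^\top z - \frac{1}{2} z^\top \Sigma z)$ and $\Phi_{\mathrm{Dirac}}(z) = e^{im^\top z}$, both of which vanish nowhere. For the forward direction, I would reduce to one dimension: for any direction $u \in \Rbb^{d_i}$, the univariate function $t \mapsto \Phi_{U^{(i)}}(tu) = \Phi_{u^\top U^{(i)}}(t)$ is entire of order at most $\rho$ and zero-free, so Hadamard's factorization gives $\Phi_{u^\top U^{(i)}}(t) = e^{P_u(t)}$ with $\deg P_u \leq \rho$, and the classical Marcinkiewicz theorem then forces $\deg P_u \leq 2$. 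Consequently every $1$-D projection of $U^{(i)}$ is Gaussian or constant, which classically implies that $U^{(i)}$ is itself a (possibly degenerate) multivariate Gaussian, hence Gaussian or Dirac. The main obstacle is precisely this last step: Marcinkiewicz's theorem is the only genuinely probabilistic (as opposed to entire-function-theoretic) ingredient in the whole argument, and one must also take care that the dimension-reduction via arbitrary linear forms $u^\top U^{(i)}$ really reconstructs the full multivariate law.
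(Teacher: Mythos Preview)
Your proposal is correct and follows essentially the same route as the paper's proof: multiplicativity $\Phi_{U+V}=\Phi_U\Phi_V$ plus the integral-domain property of entire functions for (i), the change-of-variables identity for $\Phi_V$ in (ii), and Hadamard factorization combined with Marcinkiewicz's theorem for (iii). The only cosmetic differences are that the paper phrases (i) via ``zero sets of nontrivial analytic functions have Lebesgue measure zero'' rather than the integral-domain language, and in (iii) the paper first argues variable-by-variable that $\Phi_{U^{(i)}}=\exp(P_i)$ for a polynomial $P_i$ before invoking Marcinkiewicz on the slices $t\mapsto P_i(t\lambda)$, whereas you apply Hadamard directly to each one-dimensional projection $t\mapsto\Phi_{U^{(i)}}(tu)$; both orderings lead to the same conclusion, and your worry about reconstructing the multivariate law from all its one-dimensional projections is handled by the standard fact that a distribution all of whose linear marginals are Gaussian is itself (possibly degenerate) Gaussian.
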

The proof of Proposition~\ref{prop:(Adep)prop} is detailed in section~\ref{proof:(Adep)}.

Point (i) of Proposition~\ref{prop:(Adep)prop} makes it possible to transfer a proof of (Adep) for a support with full dimension $D$ to a support with dimension $d < D$. Indeed, if $U$ is a random variable with support of dimension $d < D$, by introducing an independent random variable $V$ with support of full dimension $D$, proving that $U+V$ (whose support has full dimension) satisfies (Adep) ensures that $U$ satisfies (Adep) as well. For instance, Theorem~\ref{thm:id} below shows that a random variable having support the centered Euclidean ball with radius $\eta >0$ satisfies A(1) and (Adep). Thus geometric conditions such as those proposed in this section 
can be transposed from one dimension to another.

Point (ii) shows that the fact that A($\rho$) and (Adep) hold is not modified by linear transformations of each component of the signal. 

Finally, Point (iii) shows that to verify (Adep), outside of trivial cases, the two signal components can not be independent. Even further, combined with Point (i), this shows that it is not possible to write the signal as the sum of two independent signals where one of them has independent components: such independent sub-signals with independent components must be part of the noise.

In~\cite{InferSphere}, the authors prove that (Adep) holds for random variables supported on a sphere. In such a context, they prove that the radius of the sphere can be estimated at almost parametric rate. Here we give much more general conditions on the support of a random variable that are sufficient for (Adep) to hold. 

We define the following assumptions (H1) and (H2). Here, if $A$ is a subset of $\Rbb^D$, we write $\Diam(A)$ its diameter $\sup\{ \|x-y\|_{2} \ | \ x,y \in A \}$.

\begin{itemize}
    \item[(H1)] There exist $(A_{\Delta})_{\Delta>0}$ and $(B_{\Delta})_{\Delta>0}$ such that
    $A_\Delta \subset \Rbb^{d_2}$, $B_\Delta \subset \Rbb^{d_1}$, $(B_{\Delta})_{\Delta>0}$ is an increasing sequence,  $\Pbb(X^{(2)} \in A_{\Delta}) > 0$, $\lim_{\Delta \rightarrow 0} \Diam(B_\Delta) = 0$ and $\Pbb(X^{(1)} \in B_\Delta \,|\, X^{(2)} \in A_\Delta) = 1$.
    
    \item[(H2)]There exist $(A_{\Delta})_{\Delta>0}$ and $(B_{\Delta})_{\Delta>0}$ such that $A_\Delta \subset \Rbb^{d_1}$, $B_\Delta \subset \Rbb^{d_2}$,
    $(B_{\Delta})_{\Delta>0}$ is an increasing sequence, $\Pbb(X^{(1)} \in A_{\Delta}) > 0$, $\lim_{\Delta \rightarrow 0} \Diam(B_\Delta) = 0$ and $\Pbb(X^{(2)} \in B_\Delta \,|\, X^{(1)} \in A_\Delta) = 1$.
\end{itemize}

We prove that these assumptions are sufficient to ensure identifiability provided that A($\rho$) is satisfied.

\begin{theorem}
\label{thm:id}
    Assume that the distribution of $X$ satisfies A($\rho$), (H1) and (H2). Then $X$ satisfies A($\rho$) and (Adep).
\end{theorem}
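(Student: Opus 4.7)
The plan is to establish (Adep), since A($\rho$) is already among the hypotheses. The two halves of (Adep) are symmetric, with (H1) tailored to the first and (H2) to the second, so I will focus on showing that for every $z_0 \in \Cbb^{d_1}$ the function $z \mapsto \Phi_X(z_0, z)$ is not identically zero on $\Cbb^{d_2}$; the second half is obtained by swapping the roles of the two blocks and invoking (H2) in place of (H1). The argument is by contradiction, and combines Fourier uniqueness for finite complex measures with the ``approximate atom'' of $X^{(1)}$ encoded in (H1).

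I would first introduce the auxiliary complex measure $\mu_{z_0}$ on $\Rbb^{d_2}$ defined by
\[
  d\mu_{z_0}(x_2) = h(x_2)\,d\Pbb_{X^{(2)}}(x_2),
  \qquad
  h(x_2) = \Ebb\bigl[\exp(iz_0^\top X^{(1)}) \,\big|\, X^{(2)} = x_2\bigr].
\]
Since $|\exp(iz_0^\top X^{(1)})| = \exp(-\mathrm{Im}(z_0)^\top X^{(1)})$, A($\rho$) applied to $\lambda = (-\mathrm{Im}(z_0), 0) \in \Rbb^D$ yields $\Ebb[|\exp(iz_0^\top X^{(1)})|] < \infty$, so $\mu_{z_0}$ is a finite complex measure and Fubini gives $\widehat{\mu_{z_0}}(z) = \Phi_X(z_0, z)$ for every real $z \in \Rbb^{d_2}$. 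Assuming for contradiction that $\Phi_X(z_0,\cdot) \equiv 0$ on $\Cbb^{d_2}$, the Fourier transform of $\mu_{z_0}$ vanishes identically on $\Rbb^{d_2}$, and Fourier uniqueness for finite complex measures forces $\mu_{z_0} = 0$, i.e.\ $h = 0$ for $\Pbb_{X^{(2)}}$-a.e.\ $x_2$.

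To contradict this I would use (H1) quantitatively: choose $\Delta$ small enough that $\Diam(B_\Delta) < (\log 2)/\|z_0\|_2$, fix any $x_\Delta \in B_\Delta$, and observe that for $\Pbb_{X^{(2)}}$-a.e.\ $x_2 \in A_\Delta$ the regular conditional law of $X^{(1)}$ given $X^{(2)} = x_2$ is supported in $B_\Delta$. Writing $X^{(1)} = x_\Delta + U$ with $\|U\|_2 \leq \Diam(B_\Delta)$ on this conditional law and using $|e^w - 1| \leq e^{|w|} - 1$ for any complex $w$, I would obtain
\[
  |h(x_2)| \geq |\exp(iz_0^\top x_\Delta)|\,\bigl(2 - \exp(\|z_0\|_2\,\Diam(B_\Delta))\bigr) > 0
\]
on the set $A_\Delta$, which has positive $\Pbb_{X^{(2)}}$-measure, contradicting $h = 0$ almost everywhere.

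The main obstacle I anticipate is not the Taylor estimate but the preliminary check that $\mu_{z_0}$ is a genuine finite complex measure on $\Rbb^{d_2}$, so that Fourier injectivity can be invoked: this is precisely where A($\rho$) enters the picture, through the finiteness of the exponential moment of $X^{(1)}$ evaluated at $-\mathrm{Im}(z_0)$. Everything else reduces to the controlled estimate of $|\exp(iz_0^\top U) - 1|$ on an approximate atom whose diameter tends to zero.
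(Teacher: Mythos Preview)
Your proof is correct and follows essentially the same route as the paper's: both reduce (Adep) to showing that the conditional expectation $h(X^{(2)}) = \Ebb[\exp(iz_0^\top X^{(1)}) \mid X^{(2)}]$ cannot vanish $\Pbb_{X^{(2)}}$-almost surely, then use (H1) to exhibit a set $A_\Delta$ of positive $\Pbb_{X^{(2)}}$-measure on which $h$ is bounded away from zero. The only difference is cosmetic: the paper obtains the positivity by taking the real part of $h$ and observing that $\cos(\mathrm{Re}(z_0)^\top x)\exp(-\mathrm{Im}(z_0)^\top x) > 0$ on a sufficiently small $B_\Delta$ (after translating so that $0 \in B_\Delta$), whereas you bound $|e^{iz_0^\top U} - 1|$ directly via $e^{|w|}-1$.
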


The proof of Theorem~\ref{thm:id} is detailed in Section~\ref{proof:thmid}. 

One can interpret the assumptions (H1) and (H2) geometrically as shown in Figure~\ref{fig:H2}. In essence, it means that there exists a slice (along the first $d_1$, resp. last $d_2$, coordinates, with base $A_\Delta$) such that the random variable belongs to this slice with positive probability and such that on this slice, the support of the distribution is contained in an orthogonal slice (along the last $d_2$, resp. first $d_1$, coordinates) of diameter smaller than $\Delta$.

\begin{figure}[!h]
\centering

    \includegraphics[scale=0.5]{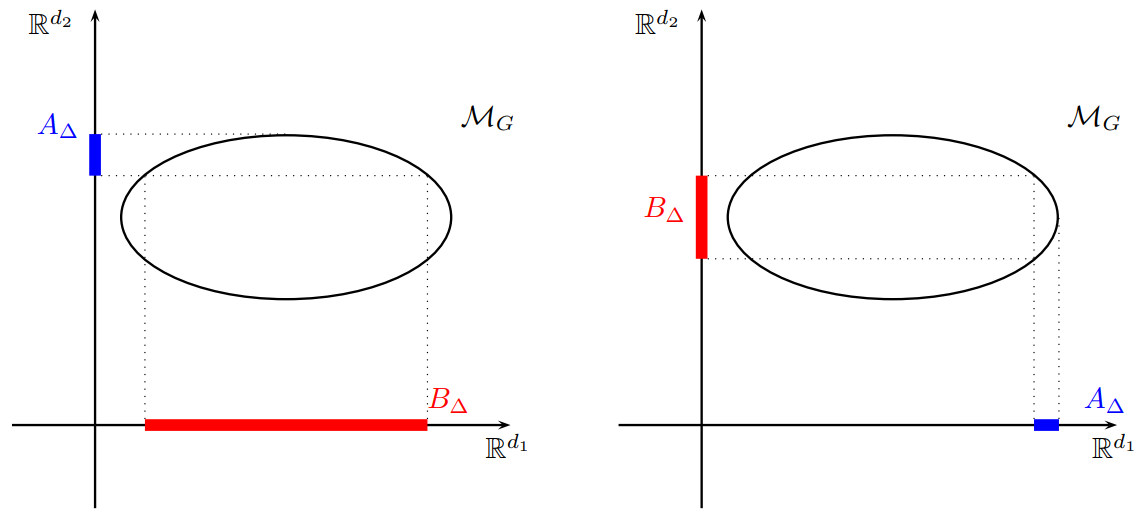}

    \caption{Left: Assumption (H1). Right: Assumption (H2).
    }
    \label{fig:H2}

\end{figure}

A reformulation of (H1) and (H2) based on the support of the signal is as follows.
Let
\begin{multline*}
\Acal_1(\Delta,\varepsilon) = \{ \Mcal \subset \Rbb^{D} \ | \ \text{There exists} \  x=(x_1,x_2) \in \Mcal \\
\text{such that} \ \Diam\Bigg (\pi^{(1:d_1)} \Bigg[\Mcal \cap (\Rbb^{d_1} \times \bar{B}(x_2,\varepsilon))\Bigg]\Bigg) < \Delta \}
\end{multline*}
and
\begin{multline*}
\Acal_2(\Delta,\varepsilon) = \{ \Mcal \subset \Rbb^{D} \ | \ \text{There exists} \  x=(x_1,x_2) \in \Mcal \\
\text{such that} \ \Diam\Bigg(\pi^{(d_1+1:D)} \Bigg[\Mcal \cap (\bar{B}(x_1,\varepsilon) \times \Rbb^{d_2})\Bigg]\Bigg) < \Delta \}.
\end{multline*}
The proof of the following theorem is straightforward.
\begin{theorem}
\label{prop:idH}
Let $\Mcal\in \left(\cap_{\Delta>0}\cup_{\varepsilon>0}\Acal_1(\Delta,\varepsilon) \right)\cap \left(\cap_{\Delta>0}\cup_{\varepsilon>0}\Acal_2(\Delta,\varepsilon)\right)$. Then any random variable with support $\Mcal$ satisfies (H1) and (H2).
\end{theorem}

We now propose sets of compact subsets of $\Rbb^D$ for which Theorem~\ref{prop:idH} holds.
Define the sets $\Bcal_1$ and $\Bcal_2$ as
\begin{equation*}
    \Bcal_1 = \{ \Mcal \subset \Rbb^D \text{ compact} \,|\, \exists x_1 \in \mathbb{R}^{d_{1}}, \text{Card}( (\{x_1\} \times \Rbb^{d_2}) \cap \Mcal ) = 1 \},
\end{equation*}
and
\begin{equation*}
    \Bcal_2 = \{ \Mcal \subset \Rbb^D \text{ compact} \,|\, \exists x_2 \in \mathbb{R}^{d_{2}}, \text{Card}( (\Rbb^{d_1} \times \{x_2\}) \cap \Mcal ) = 1 \}.
\end{equation*}

\begin{theorem}
\label{prop:idgeneral}
Let $\Mcal$ be a subset of $\Rbb^{D}$ such that $\Mcal \in \Bcal_1 \cap \Bcal_2$. If $X$ is a random variable with support $\Mcal$, then $X$ satisfies A($1$), (H1) and (H2).
\end{theorem}
The proof of Theorem~\ref{prop:idgeneral} is detailed in Section~\ref{proof:idgeneral}.

For instance, any closed Euclidean ball, and more generally any strictly convex compact set in $\Rbb^D$, is in $\Bcal_1 \cap \Bcal_2$. To see this, consider the points of the set with maximal first (resp. last) coordinate: they are unique by strict convexity, which ensures that the set is in $\Bcal_1$ (resp. $\Bcal_2$). The same holds for the boundary of any strictly convex compact set.

For the interested reader, we present some considerations on the genericity of (Adep) in Section~\ref{sec:genericity} of the supplementary material. Mainly, as a consequence of Theorems~\ref{prop:idH} and~\ref{thm:id}, (Adep) is generically satisfied in the sense that there exists a $G_\delta$-dense set of subsets of $\Rbb^D$ for the topology induced by the Hausdorff distance such that (Adep) holds for all distributions with support in this set.

\section{Discussion and further work}
\label{sec:discussion}

In this paper, we propose a very general setting in which for signal data corrupted with additive, independent and unknown noise, it is possible to recover the support and the distribution of the signal. We were able to obtain almost minimax rates over very general classes of distributions. We provided general geometrical assumptions of the support for which our results apply.

The fundamental assumptions of this work are twofold. The first one is A($\rho$) about the tail of the distribution of the signal, the second fundamental assumption is (Adep). The identifiability proof uses A($\rho$) for $\rho < 2$. In \cite{Aapo}, the authors extend to $\rho<3$ but assume that the signal has no Gaussian component. In the particular setting  of repeated observations, we were able to relax the assumption to any finite $\rho$, see \cite{Repeated}. 
Understanding whether identifiability holds for larger $\rho$ in other general settings remain as an open question.

An estimation method requires to fix a setting in which the identifiability assumptions hold. In this work, this is done by estimating the characteristic function as an element of $\Upsilon_{\kappa,S}$ and $\Hcal$. The choice of $\Hcal$ has to reflect some prior knowledge of what about the distribution of the signal makes (Adep) hold. This can be done using submodels or geometrical considerations, see the simulation experiments in \cite{InferSphere} for signals supported on a sphere and in \cite{Repeated} for repeated observations in which this precise estimation method is put in action. 

Building stable and numerically efficient algorithms remains an important and open problem. However, we believe our work opens the way to new estimation ideas. Using priors having similar properties as the one described in Section \ref{sec:genericity} could be a starting point to propose algorithms for Bayesian estimation methods.  We plan to investigate such Bayesian methods in further work. 

As usual in deconvolution settings, the minimax rates are very slow. It has been proved in the repeated measurements model that it is possible to adapt to ordinary smooth noises and get polynomial rates of convergence, see~\cite{Repeated}. Getting adaptivity to ordinary smooth noises in a general setting is an important question, with the subsidiary (still more difficult) question to find whether polynomial rates with exponents depending on the intrinsic dimension of the support are attainable.

\section{Acknowledgments}

Jeremie Capitao-Miniconi would like to acknowledge support from the UDOPIA-ANR-20-THIA-0013. \'Elisabeth Gassiat would like to acknowledge the Institut Universitaire de France and the ANR ASCAI: ANR-21-CE23-0035-02.
This work was supported by the French government, through the UCA$^\text{Jedi}$ and 3IA Côte d’Azur Investissements d’Avenir managed by the National Research Agency (ANR-15-IDEX-01 and ANR-19-P3IA-0002).

\appendix

\section{Summary of notations}
\label{sec:notations}
To help the reader, this section gathers the notations introduced in this paper, along with where they are defined.

\begin{itemize}
    \item For any $r > 0$, we write $B_r = (-r,r)$ and for any measurable function $f$ on $B_r^D$, we write $\|f\|_{\infty,r}$ the essential supremum of $f$ over $B_r^D$ and $\|f\|_{2,r} = ( \int_{B_r^D} |f(u)|^2 du)^{1/2}$. Section~\ref{sec_notations_intro}.
    \item $\Phi_{X}$: the characteristic function of $X$, $\Phi_{\varepsilon^{(i)}}$: the characteristic function of $\varepsilon^{(i)}$. Section~\ref{subsec:ident}.
    \item Assumption A($\rho$): There exist $a,b > 0$ such that for all $\lambda \in \Rbb^D$, $\Ebb \left[\exp \left(\lambda^\top X\right)\right] \leq a \exp \left( b \|\lambda\|_{2}^\rho\right)$. Section~\ref{subsec:ident}.
    \item Assumption (Adep): For any $z_{0}\in \Cbb^{d_1}$, 
$z \mapsto \Phi_X (z_{0},z)$
is not the null function and for any $z_{0}\in \Cbb^{d_2}$, 
$z \mapsto \Phi_X (z,z_{0})$
is not the null function. Section~\ref{subsec:ident}.
    \item $\Mcal_G$: support of the distribution $G$. Section~\ref{subsec:ident}.
    \item For all $(t_1,t_2) \in \Rbb^{d_1}\times \Rbb^{d_2}$, $
    \tilde \phi_n(t_1,t_2) = \frac{1}{n}\sum_{\ell=1}^{n} \exp\left\{it_1^\top Y_{\ell}^{(1)} + it_2^\top Y_{\ell}^{(2)}\right\}$. Section~\ref{subsec:chara}.
    \item $M(\phi; \nu | \Phi_X)$ and $M_{n}(\phi)$, Section~\ref{subsec:chara}:
    \begin{multline*}
    M(\phi; \nu | \Phi_X)
        = \int_{B_{\nu}^{d_1}\times B_{\nu}^{d_2}} | \phi(t_1,t_2) \Phi_X(t_1,0) \Phi_X(0,t_2) - \Phi_X(t_1,t_2) \phi(t_1,0) \phi(0,t_2) |^2 \\
            \times |\Phi_{\varepsilon^{(1)}}(t_1)\Phi_{\varepsilon^{(2)}}(t_2) |^2 d t_1 d t_2,
    \end{multline*}
    \begin{equation*}
    M_{n}(\phi) = \int_{B_{\nuest}^{d_1}\times B_{\nuest}^{d_2}} | \phi(t_1,t_2) \tilde\phi_n(t_1,0) \tilde\phi_n(0,t_2) - \tilde \phi_n(t_1,t_2) \phi(t_1,0) \phi(0,t_2) |^2 d t_1 d t_2.
    \end{equation*}
    \item For $S>0$ and $\rho > 0$, $\Upsilon_{\rho,S} = \lbrace 
\phi \text{ analytic } \text{s.t. } \forall z\in \Rbb^{D},\overline{\phi(z)}=\phi(-z), \phi(0) = 1 \text{ and } \forall i \in \Nbb^D \setminus \{0\}, \left| \frac{\partial^{i} \phi(0)}{\prod_{a=1}^d i_a!}\right| \leq \frac{S^{\|i\|_1}}{\|i\|_1^{ \|i\|_1 /\rho}} \rbrace$. Section~\ref{subsec:chara}.
    \item $\Hcal$: a subset of functions $\Cbb^D \rightarrow \Cbb^D$ such that all elements of $\Hcal$ satisfy (Adep) and such that the set of the restrictions to $[-\nuest,\nuest]^D$ of functions in $\Hcal$ is closed in $L^{2}([-\nuest,\nuest]^D)$. Section~\ref{subsec:chara}.
    \item For any integer $m$ and any $\rho > 1$, $\widehat \Phi_{n,m,\rho}$ is a (up to $1/n$) measurable minimizer of the functional $\phi \mapsto M_n(T_m \phi)$ over $\Upsilon_{\rho,S} \cap \Hcal$. Section~\ref{subsec:chara}.
    \item $c_{\nu}=\inf\{|\Phi_{\varepsilon^{(1)}}(t)|,\;t\in [-\nu,\nu]^{d_1}\} \wedge 
    \inf\{|\Phi_{\varepsilon^{(2)}}(t)|,\;t\in [-\nu,\nu]^{d_2}\}$. Section~\ref{subsec:chara}.
    \item For any $\nu>0$, $c(\nu ) >0$, $E>0$,
$\Qcal^{(D)} (\nu,c({\nu}),E)$ is the set of distributions $\mathbb{Q}=\otimes_{j=1}^2 \mathbb{Q}_{j}$ on $\Rbb^D$ such that $c_{\nu}\geq c(\nu)$ and $\int_{\Rbb^D} \|x\|^2 d\mathbb{Q}(x)\leq E$. Section~\ref{subsec:chara}.
    \item $\kappa = 1/\rho$. Section~\ref{subsec:Mupper}.
    \item $d_H(A_1,A_2)$ is the Hausdorff distance between $A_1$ and $A_2$ subsets of $\Rbb^D$. Section~\ref{subsec:Mupper}.
    \item $\Kcal$: a compact subset of $\Rbb^D$ (which can be arbitrarily large).  Section~\ref{subsec:Mupper}.
    \item For all $S_1,S_2$ subsets of $\Rbb^D$, $H_{\Kcal}(S_1,S_2) = d_H(S_1 \cap \Kcal, S_2 \cap \Kcal)$. Section~\ref{subsec:Mupper}.
    \item $\text{St}_{\Kcal}(a,d,r_0)$: set of positive measures $G$ such that for all $x \in \Mcal_G \cap \Kcal$, for all $r \leq r_0$, $G(B(x,r)) \geq a r^d$. The distributions in $St_{\Kcal}(a,d,r_0)$ are called $(a,d)$-standard. Section~\ref{subsec:Mupper}.
    \item $\Lcal(\kappa,S,\Hcal)$:  set of distributions $G$ such that, if $X$ is a random variable with distribution $G$, then $\Phi_X \in \Hcal \cap \Upsilon_{1/\kappa,S}$. Section~\ref{subsec:Mupper}.
    \item Fourier and inverse Fourier transform : when $f$ is an integrable function from $\Rbb^D$ to $\Rbb$, for all $y \in \Rbb^D$, $\Fcal[f](y) = \int e^{it^{\top}y} f(t) dt \ \ \text{and} \ \ \Fcal^{-1}[f](y) = (\frac{1}{2 \pi})^D \int e^{-it^{\top}y} f(t) dt$. End of Section~\ref{overview}.
    \item For any $A>0$, for all $y \in \Rbb$ $u_{A}(y) = \exp\left\{ -\frac{1}{(1-2y)^{A}} -\frac{1}{(1+2y)^{A}}\right\} 1|_{[-\frac{1}{2},\frac{1}{2}]}(y)$. Section~\ref{subsec:Mupper}.
    \item For any $A>0$ and $y \in \Rbb^D$, $\psi_A(y) = I(A) \ \Fcal^{-1}[u_{A} * u_{A}](\|y\|_2)  \ \text{with} \ I(A) = \frac{1}{\int \Fcal^{-1} [ u_{A} * u_{A} ](\|x\|_2) dx}$ and $\psi_{A,h}(x) = h^{-D} \psi_A(x/h)$. Section~\ref{subsec:Mupper}.
    \item The constants $c_{A}>0$ and $d_{A}>0$: for all $x \in \{y\in\Rbb^D \,: \, \|y\|_{2}\leq c_{A}\}$, $\psi_A(x) \geq d_{A}$. Section~\ref{subsec:Mupper}.
    \item For all $y \in \Rbb^D$, $\bar{g}(y) = (\frac{1}{2 \pi})^{D} \int e^{-it^{\top}y} \Fcal[\psi_A](ht) \  \Phi_X(t) dt $. Section~\ref{subsec:Mupper}.
    \item For all $y \in \Rbb^d$, $\widehat{g}_{n,\kappa}(y) = \left(\frac{1}{2 \pi}\right)^{D} \int e^{-it^{\top}y} \Fcal[\psi_A] (ht) \ T_{m_{\kappa}} \widehat{\Phi}_{n,1/\kappa}(t) dt$. Section~\ref{subsec:Mupper}.
    \item $\widehat{\Mcal}_{\kappa} = \left\{ y \in \Rbb^D \ | \ \widehat{g}_{n,\kappa}(y) > \lambda_{n,\kappa} \right\}$. Section~\ref{subsec:Mupper}.
    \item $\Gamma_{n,\kappa} = \|\widehat{g}_{n,\kappa} - \bar{g}\|_{\infty} = \sup_{y \in \Rbb^D} |\widehat{g}_{n,\kappa}(y) - \bar{g}(y)|$. Section~\ref{subsec:Mupper}.
    \item For any $\eta>0$, $A_{\eta}$ denotes the $\eta$-offset of $A$. Section~\ref{sec:Wasserstein}.
    \item For any borelian set $\Ocal$ of $\Rbb^D$, $P_{\psi_{A,h_n}}(\Ocal) = \int_{\Ocal} \bar{g}(y) dy$. Section~\ref{sec:Wasserstein}.
    \item $\widehat{g}_n:=\widehat{g}_{n,1}$ and $\widehat{g}_n^{+} = \max{ \{ 0,\widehat{g}_n \} }$. Section~\ref{sec:Wasserstein}.
    \item For any borelian set $\Ocal$ of $\Rbb^D$, $
    \widehat{P}_{n,\eta}(\Ocal)
    = \frac{1}{\int_{(\widehat{\Mcal} \cap \bar{B}(0,R_n))_{{\eta}}} \widehat{g}_n^{+}(y) dy} \int_{\Ocal \cap (\widehat{\Mcal} \cap \bar{B}(0,R_n))_{{\eta}}} \widehat{g}_n^{+}(y) dy
    = c_n \int_{\Ocal \cap (\widehat{\Mcal} \cap \bar{B}(0,R_n))_{{\eta}}} \widehat{g}_n^{+}(y) dy$.
    \item For any $p \in [1,+\infty)$ and any two probability measures $\mu$ and $\nu$ on $\Rbb^D$, $W_p(\mu,\nu)$ is the Wasserstein distance of order $p$ between $\mu$ and $\nu$. Section~\ref{sec_Wasserstein_upper}.
    \item For $k,l \in \{1, \dots, D\}$ with $k \leq l$, $\pi^{(k:l)}: (x_1, \dots, x_D) \in \Rbb^D  \mapsto (x_k, \dots, x_l) \in \Rbb^{l-k+1}$ and $\pi^{(k)} = \pi^{(k:k)}$. Section~\ref{subsec:geocondi}.
\end{itemize}


\section{Proofs}
\label{appC}

\subsection{Proof of Proposition~\ref{prop:A(rho)}}
\label{proof:A(rho)}

\paragraph*{Case $\rho = 1$}

It is clear that any compactly supported distribution satisfies A(1). Conversely, if $\Ebb[e^{\langle \lambda, X\rangle}] \leq a \exp(b \|\lambda\|_2)$, then for any $\mu > 0$,  we get, for any $b' > b$, if we denote $(e_{j})_{1\leq j \leq D}$ the canonical basis of $\Rbb^D$,
\begin{align*}
\Pbb(\|X\|_2 \geq Db')
    &\leq \sum_{j=1}^{D}\Pbb(|X_{j}| \geq b') \\
    &= \sum_{j=1}^{D}\left\{\Pbb(X_{j} \geq b') + \Pbb(X_{j} \leq - b')\right\} \\
    &= \sum_{j=1}^{D}\left\{\Pbb(\langle \mu e_{j},X \rangle \geq \mu b') + \Pbb(-\langle \mu e_{j},X \rangle \geq  \mu b')\right\} \\
    &\leq \sum_{j=1}^{D}\left\{\frac{\Ebb\left[\exp (\langle \mu e_{j},X \rangle)\right]}{\exp(b' \mu)}
     +\frac{\Ebb\left[\exp (-\langle \mu e_{j},X \rangle)\right]}{\exp(b' \mu)}
     \right\} \quad \text{by Markov inequality} \\
    &\leq 2D \frac{a \exp(b \mu)}{\exp(b' \mu)} \underset{\mu \rightarrow +\infty}{\longrightarrow} 0,
\end{align*}
and hence $\|X\|_2 \leq Db$ almost surely.

\paragraph*{Case $\rho > 1$}

Assume that for any $\lambda \in \Rbb^D$, $\Ebb[e^{\langle \lambda, X \rangle}] \leq a \exp(b \|\lambda\|_2^\rho)$ for some $a,b>0$. Then by using the same directional method as for $\rho=1$, we get that for any $\mu,t \geq 0$,
\begin{align*}
\Pbb(\|X\|_2 \geq t)
    &\leq 2D a \exp(b \mu^\rho - \mu t) \\
    &= 2D a \exp\left(-\left(\frac{\rho} {b}\right)^{\frac{1}{\rho-1}}
    \left(1-\rho^{-\frac{\rho+1}{\rho-1}}\right)t^{\frac{\rho}{\rho-1}}  \right) \quad \text{by taking } \mu = \left( \frac{t}{b\rho} \right)^{\frac{1}{\rho-1}}.
\end{align*}
Observe that since $\rho >1$,
$\left(1-\rho^{-\frac{\rho+1}{\rho-1}}\right)>0$ to get the result.\\

Now, assume that for any $t \geq 0$, $\Pbb(\|X\|_2 \geq t) \leq c \exp(-d t^{\rho / (\rho-1)})$ for some $c,d>0$, then by the Cauchy-Schwarz inequality, for any $\lambda \in \Rbb^D$,
\begin{equation*}
\Ebb[e^{\langle \lambda, X \rangle}]
    \leq \Ebb[e^{\|\lambda\|_2 \|X\|_2}].
\end{equation*}
Then, using that for any nonnegative random variable $Y$, $\Ebb[Y] = \int_{t \geq 0} \Pbb(Y \geq t) dt$,
\begin{align*}
\Ebb[e^{\langle \lambda, X \rangle}]
    &\leq 1 + \int_{t \geq 1} \Pbb(e^{\|\lambda\|_2 \|X\|_2} \geq t) dt \\
    &\leq 1 + \int_{s \geq 0} \Pbb(\|X\|_2 \geq s) \|\lambda\|_2\exp(\|\lambda\|_2s) ds \quad \text{with } t = e^{\|\lambda\|_2 s} \\
    &\leq 1 + c \|\lambda\|_2 \int_{s \geq 0} \exp(-d s^{\frac{\rho}{\rho-1}} + \|\lambda\|_2s) ds\\
    &\leq 1 + c \|\lambda\|_2^{\rho} \int_{s' \geq 0} \exp(\|\lambda\|_2^\rho(-d s'^{\frac{\rho}{\rho-1}} + s')) ds' \quad \text{with } s = s' \|\lambda\|_2^{\rho-1}.
\end{align*}
Note that $-d s'^{\frac{\rho}{\rho-1}} + s' \leq \frac{1}{\rho} (\frac{\rho-1}{\rho d})^{\rho-1}$ for any $s' \geq 0$, and $-d s'^{\frac{\rho}{\rho-1}} + s' \leq -s'$ when $s' \geq (\frac{2}{d})^{\rho-1}$. In particular,
\begin{align*}
\Ebb[e^{\langle \lambda, X \rangle}]
    &\leq 1
        + c \|\lambda\|_2^{\rho} \int_{s'=0}^{(\frac{2}{d})^{\rho-1}} \exp(\|\lambda\|_2^\rho(-d s'^{\frac{\rho}{\rho-1}} + s')) ds' \\
    &\qquad + c \|\lambda\|_2^{\rho} \int_{s' \geq  (\frac{2}{d})^{\rho-1}} \exp(\|\lambda\|_2^\rho(-d s'^{\frac{\rho}{\rho-1}} + s')) ds' \\
    &\leq 1
        + c \|\lambda\|_2^{\rho} \left(\frac{2}{d}\right)^{\rho-1} \exp\left( \frac{\|\lambda\|_2^\rho}{\rho} \left(\frac{\rho-1}{\rho d}\right)^{\rho-1} \right)
        + c \exp\left(-\|\lambda\|_2^\rho \left(\frac{2}{d}\right)^{\rho-1} \right),
\end{align*}
which proves that $A(\rho)$ holds.

\subsection{Proof of Proposition~\ref{prop:(Adep)prop}}
\label{proof:(Adep)}

First, note that if $U$ and $V$ are independent random variables satisfying A($\rho$) then $U+V$ satisfies also A($\rho$) with the same constant $\rho$.

\begin{itemize}
    \item[(i)] If $U$ and $V$ are independent, then for all $(z_1,z_2) \in \Cbb^{d_{1}} \times \Cbb^{d_{2}}$,
    \begin{equation}
    \label{eq:U+V}
        \Phi_{U+V}(z_1,z_2) = \Phi_{U}(z_1,z_2) \Phi_{V}(z_1,z_2).
    \end{equation}
    Assume first that $U$ and $V$ satisfy (Adep). Suppose that there exists $z_0 \in \Cbb^{d_{1}}$ such that for all $z \in \Cbb^{d_{2}}$, $\Phi_{U+V}(z_0,z) = 0$. Then for all $z \in \Cbb^{d_{2}}$,
    \begin{equation*}
        \Phi_{U}(z_0,z) \Phi_{V}(z_0,z) = 0.
    \end{equation*}
    If $Z_U^{(1)}(z_0) = \{ z \in \Cbb^{d_{2}} \, | \, \Phi_X(z_0,z)=0 \}$ and $Z_V^{(1)}(z_0) = \{ z \in \Cbb^{d_{2}} \, | \, \Phi_Y(z_0,z)=0 \}$, $Z_U^{(1)}(z_0) \cup Z_V^{(1)}(z_0) = \Cbb^{d_{2}}$. Since $\Phi_U(z_0, \cdot)$ and $\Phi_V(z_0, \cdot)$ are not the null functions, Corollary 10 of~\cite{MR2568219}, p. 9, implies that $Z_U^{(1)}(z_0) \cup Z_V^{(1)}(z_0)$ has zero $2 d_2$-Lebesgue measure, which contradicts the fact that $Z_U^{(1)}(z_0) \cup Z_V^{(1)}(z_0) = \Cbb^{d_{2}}$. If instead we suppose that there exists $z_0 \in \Cbb^{d_{2}}$ such that for all $z \in \Cbb^{d_{1}}$, $\Phi_{U+V}(z,z_0) = 0$, analogous arguments lead to a contradiction. Thus $U+V$ satisfies (Adep).
   
    Assume now that $U+V$ satisfies (Adep). Then~\eqref{eq:U+V} implies that $\Phi_{U}(z_1,\cdot)$, $\Phi_{V}(z_1,\cdot)$, $\Phi_{U}(\cdot,z_2)$, $\Phi_{V}(\cdot,z_2)$ can not be the null function, so that $U$ and $V$ both satisfy (Adep).

    \item[(ii)]
    First, we remind the definition of the operator norm, let $M \in \Rbb^{D \times D}$, then, $\|M\|_{op} = \sup_{\|x\|_2 \neq 0} \frac{\|Mx\|_2}{\|x\|_2}$.
    Assume that $U$ satisfies $A(\rho)$ with constants $a$ and $b$. Then, for any $\lambda \in \Rbb^{D}$,
    \begin{align*}
        \Ebb\left[\exp{(\lambda^\top V)}\right] &= \Ebb\left[\exp{\left( \lambda^\top\begin{pmatrix} A & 0 \\ 0 & B \end{pmatrix} \begin{pmatrix} U^{(1)}\\ U^{(2)}\end{pmatrix} + \lambda^\top \begin{pmatrix} m_1\\m_2 \end{pmatrix} \right) }\right]\\
        & \leq a\exp{\left(b \left\|\lambda^\top\begin{pmatrix} A & 0 \\ 0 & B \end{pmatrix}
        \right\|_{2}^{\rho}
        +\|\lambda\|_{2} \left\|\begin{pmatrix} m_1\\m_2 \end{pmatrix}\right\|_{2}\right) } \\
        & \leq a\exp{\left(b \left\|\begin{pmatrix} A & 0 \\ 0 & B \end{pmatrix}
        \right\|_{op}^{\rho}\|\lambda\|_{2}^{\rho}
        +\|\lambda\|_{2} \left\|\begin{pmatrix} m_1\\m_2 \end{pmatrix}\right\|_{2}\right) }.
    \end{align*}
    Since $\rho \geq 1$, $\|\lambda\|_{2}\leq \|\lambda\|_{2}^{\rho}$ for $\|\lambda\|_{2}\geq 1$, so that if $U$ satisfies $A(\rho)$ with constants $a$ and $b$, then $V$ satisfies $A(\rho)$ with constants $a \exp \left(\left\|\begin{pmatrix} m_1\\m_2 \end{pmatrix}\right\|_{2}\right) $ and $b \left\|\begin{pmatrix} A & 0 \\ 0 & B \end{pmatrix}
        \right\|_{op}^{\rho}+\left\|\begin{pmatrix} m_1\\m_2 \end{pmatrix}\right\|_{2}$.
    The converse follows from applying the direct proof to $V$ with  
    $-\begin{pmatrix} A^{-1} & 0 \\ 0 & B^{-1} \end{pmatrix} \begin{pmatrix} m_1 \\ m_2 \end{pmatrix}$ and $\begin{pmatrix} A^{-1} & 0 \\ 0 & B^{-1} \end{pmatrix}$.
    
    Now, for all $(z_1,z_2) \in \Cbb^{d_{1}}\times \Cbb^{d_{2}}$,
    \begin{equation*}
    \Phi_{V}(z_1,z_2) = \exp{\left( \lambda^\top  \begin{pmatrix} m_1 \\ m_2 \end{pmatrix} \right)} \Phi_U(A^\top z_1, B^\top z_2)
    \end{equation*}
    and
    \begin{equation*}
    \Phi_{U}(z_1,z_2) = \exp{\left( - \lambda^\top \begin{pmatrix} A^{-1} & 0 \\ 0 & B^{-1} \end{pmatrix} \begin{pmatrix} m_1 \\ m_2 \end{pmatrix} \right)} \Phi_{V}((A^{-1})^\top z_1, (B^{-1})^\top z_2),
    \end{equation*}
    so that $U$ verifies (Adep) if and only if $V$ verifies (Adep).
    
    \item[(iii)]
    Since $U^{(1)}$ and $U^{(2)}$ are independent, for all $z_1 \in \Cbb^{d_1}$ and $z_2 \in \Cbb^{d_2}$, $\Phi_U(z_1,z_2) = \Phi_{U^{(1)}}(z_1) \Phi_{U^{(2)}}(z_2)$. Thus if $U^{(1)}$ and $U^{(2)}$ are deterministic or Gaussian random variables, $U$ satisfies (Adep). Conversely,
    if $U$ satisfies (Adep), then  neither $\Phi_{U^{(1)}}$ nor $\Phi_{U^{(2)}}$ have any zeros. By Hadamard's factorization Theorem (See \cite{AnaComplexBook}, Chapter 5, Theorem 5.1) together with A($\rho$), reasoning variable by variable we obtain that $\Phi_{U^{(1)}} = \exp(P_1)$ and $\Phi_{U^{(2)}} = \exp(P_2)$ for some polynomials $P_1$ and $P_2$ with degree bounded by $\rho$ in each variable. Now, for $j=1,2$, for any $\lambda \in \Rbb^{d_j}$, $t\mapsto \Phi_{U^{(j)}}(t\lambda)$ is the characteristic of the random variable $\langle\lambda,X^{(j)}\rangle$ and writes $\exp(P_{j} (t\lambda))$. But by Marcinkiewicz's theorem 2bis in~\cite{MR1545791}, this implies that $t\mapsto P_{j} (t\lambda)$ is of degree at most two. Since this is true for any $\lambda$, we get that $P_1$ and $P_2$ are polynomials with total degree at most two.
    Thus the polynomials $P_1$ and $P_2$ are of the form $i\langle A, X\rangle - \frac{1}{2} X^\top B X$ for some symmetric matrix $B$ since characteristic functions are equal to 1 at zero and $\Phi_{U^{(j)}}(-z) = \overline{\Phi_{U^{(j)}}(z)}$ for all $z \in \Rbb^{d_{j}}$ for $j = 1,2$. 
    Therefore the distribution of $U_1$ (resp. $U_2$) is a (possibly singular) Gaussian distribution.
\end{itemize}


\subsection{Proof of Theorem~\ref{thm:id}}
\label{proof:thmid}

Consider a random variable $X$ satisfying A($\rho$). Theorem~\ref{thm:id} is a direct consequence of the following Lemma \ref{lemma:id}. Indeed, for any $z_{0}\in \Cbb^{d_{1}}$ and $z\in \Cbb^{d_{2}}$,
\begin{equation*}
\Ebb \left[\exp\left(iz_{0}^\top X^{(1)}+iz^\top X^{(2)}\right)\right]
    = \Ebb \left[
            \Ebb\left[\exp\left(iz_{0}^\top X^{(1)}\right) \, | \, X^{(2)}\right]
            \exp\left(iz^\top X^{(2)}\right)
        \right],
\end{equation*}
and we easily show that ${z\mapsto \Ebb [\exp(iz_{0}^\top X^{(1)}+iz^\top X^{(2)})]}$ is the null function if and only if $\Ebb [\exp(iz_{0}^\top X^{(1)}) \, | \, X^{(2)}]$ is zero $\Pbb_{X^{(2)}}$-a.s.
Define $h$ the measurable function such that 
$$
\Ebb\left[\exp\left(iz_{0}^\top X^{(1)}\right) \, | \, X^{(2)}\right]=h\left(X^{(2)}\right)\;\;\Pbb_{X^{(2)}}-a.s.,
$$
and assume that for all $z\in \Cbb^{d_{2}}$,
$
            \Ebb\left[h\left(X^{(2)}\right)
            \exp\left(iz^\top X^{(2)}\right)
        \right]=0.$
Denote for all $x\in\Rbb^{d_2}$, $h_+ (x)= \max{\lbrace h(x),0 \rbrace}$ and $h_- (x)= \max{ \lbrace -h(x),0 \rbrace}$. We get for all $z\in \Cbb^{d_{2}}$,
$$         \Ebb\left[h_{+}\left(X^{(2)}\right)
            \exp\left(iz^\top X^{(2)}\right)
        \right]=\Ebb\left[h_{-}\left(X^{(2)}\right)
            \exp\left(iz^\top X^{(2)}\right)
        \right].$$
In particular, the probability distributions having density $h_{+}$ and $h_{-}$ with respect to $\Pbb_{X^{(2)}}$ (up to the same normalizing constant) have equal characteristic functions, thus are the same probability distributions, so that $h_{+}\left(X^{(2)}\right)=h_{-}\left(X^{(2)}\right)$, $\Pbb_{X^{(2)}}$-a.s., which implies $h\left(X^{(2)}\right)=0$, $\Pbb_{X^{(2)}}$-a.s.

Likewise, for any ${z_{0}\in\Cbb^{d_{2}}}$, $z\mapsto \Ebb [\exp(iz^\top X^{(1)}+iz_{0}^\top X^{(2)})]$ is the null function if and only if $\Ebb [\exp(iz_{0}^{\top} X^{(2)}) \, | \, X^{(1)}]$ is zero $\Pbb_{X^{(1)}}$-a.s.

\begin{lemma}
\label{lemma:id}
Assume (H1) and (H2). Then, for all $z \in \Cbb^{d_{1}}$, $\Ebb[ \exp{( i z^{\top} X^{(1)} )} \, | \, X^{(2)} ]$ is not $\Pbb_{X^{(2)}}$-a.s. the null random variable and for all $z \in \Cbb^{d_{2}}$, $\Ebb[ \exp{( i z^{\top} X^{(2)} )} \, | \, X^{(1)} ]$ is not $\Pbb_{X^{(1)}}$-a.s. the null random variable 
\end{lemma}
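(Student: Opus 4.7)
My plan is to argue by contradiction, proving only the first claim since the second is strictly symmetric after swapping (H1) with (H2) and the indices $1\leftrightarrow 2$. Fix $z\in\Cbb^{d_{1}}$ and assume the conditional expectation $\Ebb[\exp(iz^{\top}X^{(1)})\mid X^{(2)}]$ vanishes $\Pbb_{X^{(2)}}$-a.s. This would imply $\Ebb[\exp(iz^{\top}X^{(1)})\mathbf{1}_{\{X^{(2)}\in C\}}]=0$ for every measurable $C\subset\Rbb^{d_{2}}$; the idea is to apply this with $C=A_{\Delta}$ supplied by (H1), since (H1) forces the conditional law of $X^{(1)}$ on this event to concentrate in the shrinking set $B_{\Delta}$, at which point $\exp(iz^{\top}X^{(1)})$ cannot itself be small enough to integrate to $0$.

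The key observation is that conditionally on $\{X^{(2)}\in A_{\Delta}\}$, the variable $X^{(1)}$ lives almost surely inside $B_{\Delta}$, which has vanishing diameter. I would pick any reference point $b_{\Delta}\in B_{\Delta}$, write $\exp(iz^{\top}X^{(1)})=\exp(iz^{\top}b_{\Delta})\exp(iz^{\top}(X^{(1)}-b_{\Delta}))$, factor out the nonzero scalar $\exp(iz^{\top}b_{\Delta})$, and subtract from the identity $\Pbb(X^{(2)}\in A_{\Delta})=\Ebb[\mathbf{1}_{\{X^{(2)}\in A_{\Delta}\}}]$ to obtain
\[
\Pbb(X^{(2)}\in A_{\Delta}) = \Ebb\!\left[\bigl(1-\exp(iz^{\top}(X^{(1)}-b_{\Delta}))\bigr)\mathbf{1}_{\{X^{(2)}\in A_{\Delta},\,X^{(1)}\in B_{\Delta}\}}\right].
\]
Using the elementary bound $|e^{w}-1|\leq |w|\,e^{|w|}$ with $w=iz^{\top}(X^{(1)}-b_{\Delta})$ and Cauchy--Schwarz to get $|w|\leq \|z\|_{2}\,\Diam(B_{\Delta})$ on the integration event, the right-hand side is bounded by $\|z\|_{2}\,\Diam(B_{\Delta})\,\exp(\|z\|_{2}\,\Diam(B_{\Delta}))\,\Pbb(X^{(2)}\in A_{\Delta})$. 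Dividing by $\Pbb(X^{(2)}\in A_{\Delta})>0$ and letting $\Delta\to 0$ forces $1\leq 0$, the contradiction I seek.

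The step I would check most carefully is the complex-$z$ case: unlike for real $z$, $|\exp(iz^{\top}x)|$ is not bounded on $\Rbb^{d_{1}}$, so one cannot rely on naive pointwise bounds. The clean way around this is the factorization above, which isolates a single nonzero multiplicative constant $\exp(iz^{\top}b_{\Delta})$ and reduces everything to a small perturbation over the bounded set $B_{\Delta}$, where $|e^{w}-1|\leq|w|e^{|w|}$ gives uniform control depending only on $\|z\|_{2}$ and $\Diam(B_{\Delta})$. Notably, no use is made of A($\rho$), which is consistent with the way the lemma is stated. The second claim follows verbatim from the same argument after swapping (H1) with (H2) and exchanging the roles of $X^{(1)}$ and $X^{(2)}$.
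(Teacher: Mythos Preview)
Your proof is correct. Both your argument and the paper's exploit the same geometric content of (H1): on the event $\{X^{(2)}\in A_\Delta\}$, the variable $X^{(1)}$ lies almost surely in a set of vanishing diameter, so $\exp(iz^\top X^{(1)})$ is essentially a nonzero constant there and cannot have zero conditional mean. The executions differ, however. The paper first translates so that $0\in B_\Delta$ (invoking Proposition~\ref{prop:(Adep)prop}(ii)), then takes the real part of the vanishing conditional expectation to obtain
\[
\Ebb\bigl[\cos(\mathrm{Re}(z)^\top X^{(1)})\,e^{-\mathrm{Im}(z)^\top X^{(1)}}\,\bigm|\,X^{(2)}\bigr]\mathbf{1}_{\{X^{(2)}\in A_\Delta\}}=0,
\]
and observes that for $\Delta$ small the integrand is strictly positive on $B_\Delta$, an immediate contradiction with no quantitative estimate needed. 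Your route avoids the translation by centering at an arbitrary $b_\Delta\in B_\Delta$ and uses the analytic bound $|e^{w}-1|\leq|w|e^{|w|}$ to make the smallness quantitative; this is slightly longer but has the minor advantage of not invoking Proposition~\ref{prop:(Adep)prop} or any centering convention, and it makes explicit that A($\rho$) plays no role in the lemma itself. Either variant is perfectly adequate.
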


\paragraph*{Proof of Lemma~\ref{lemma:id}}

To begin with, by Proposition~\ref{prop:(Adep)prop}, we may assume without loss of generality that $0 \in B_{\Delta}$ in (H1) and (H2) (up to translation of $X$).

Let $z \in \Cbb^{d_{1}}$ be such that $\Ebb[ \exp{( i z^\top X^{(1)} )} | X^{(2)}] $ is $\Pbb_{X^{(2)}}$-a.s. the null random variable.
Then for any $\Delta > 0$, if we denote $A_{\Delta}$ a set given by (H1), $\Ebb [\exp{( i z^\top X^{(1)} )} | X^{(2)}] 1|_{X^{(2)} \in A_{\Delta}} = 0$ $\Pbb_{X_1^{(2)}}$ a.s., and taking the real part of this equation shows that
\begin{equation}
\label{eq:contrad}
    \Ebb [\cos(\text{Re}(z)^\top X^{(1)})\exp{( - \text{Im}(z)^\top X^{(1)} )}  | X^{(2)} ] 1|_{X^{(2)} \in A_{\Delta}} = 0 \quad \Pbb_{X^{(2)}}  \text{a.s}.
\end{equation}
Using (H1), we can fix $\Delta >0$ small enough such that if $x\in B_\Delta$, $\cos(\text{Re}(z)^\top x) > 0$. But for such $\Delta$, Equation~\eqref{eq:contrad} can not hold since $\Pbb(X^{(1)} \in B_\Delta \,|\, X^{(2)} \in A_\Delta) = 1$. Thus $\Ebb[ \exp{( i z^\top X^{(1)} )} | X^{(2)}] $ is not $\Pbb_{X^{(2)}}$-a.s. the null random variable.

The proof of the other part of Lemma~\ref{lemma:id} is analogous using (H2).

\subsection{Proof of Theorem~\ref{prop:idgeneral}}
\label{proof:idgeneral}

Let $\Mcal$ be a compact subset of $\Rbb^D$.
Let us first prove that the function $u \longmapsto \Diam(\{u\}\times \Rbb^{d_2} \cap \Mcal)$ is upper semi-continuous.

Let $u \in \Rbb^{d_{1}}$.
Since $\Mcal$ is compact, there exist sequences $u_n \rightarrow u$ and
$(x_{n}$, $y_{n})$ in $(\{u_n\}\times \Rbb^{d_2} \cap \Mcal)$  such that
$\|x_{n}-y_{n}\|_{2}=\Diam(\{u_n\}\times \Rbb^{d_2} \cap \Mcal)$ and $\lim_{n\rightarrow +\infty}\|x_{n}-y_{n}\|_{2}=\limsup_{v \rightarrow u} \Diam(\{v\}\times \Rbb^{d_2} \cap \Mcal)$.
Moreover, we may assume that there exists $(x, y)$ in $(\{u\}\times \Rbb^{d_2} \cap \Mcal)$ such that $x_n \rightarrow x$ and $y_n \rightarrow y$.
Taking the limit along those sequences shows that $\Diam(\{u\}\times \Rbb^{d_2} \cap \Mcal)\geq \|x-y\| =\limsup_{v \rightarrow u} \Diam(\{v\}\times \Rbb^{d_2} \cap \Mcal)$, proving the claimed upper-semi continuity.

Now, since $\Mcal$ is compact, there exists $R>0$ such that $\Mcal\subset \bar{B}(0,R)$. If moreover $\Mcal \in \Bcal_1$, there exists $x_{1}\in\Rbb^{d_1}$ such that $\Diam(\{x_{1}\}\times \Rbb^{d_2} \cap \Mcal) = 0$. Using the upper semi-continuity shows that $\Mcal \in \cap_{n \geq 1} \Acal_2(1/n, R)$.
Likewise, if $\Mcal \in \Bcal_2$, there exists $x_{2}\in\Rbb^{d_2}$ such that $\Diam(\Rbb^{d_1}\times \{x_{2}\} \cap \Mcal) = 0$ and $\Mcal \in \cap_{n \geq 1} \Acal_1(1/n, R)$.
 
The end of the proof follows from Theorem~\ref{prop:idH} and the fact that any random variable with compact support satisfies A(1).

\section{Genericity of (H1) and (H2)}
\label{sec:genericity}

\subsection{Genericity results}
\label{sec:genericity:resuts}

The main purpose of this subsection is to show that hypotheses (H1) and (H2) are verified generically.

First, we show that while the set of supports satisfying Theorem~\ref{prop:idH} is dense in the set of closed sets of $\Rbb^D$, its complement is also dense.

\begin{proposition}
\label{prop:dense}
    The set $\Acal = \left(\cap_{\Delta > 0} \cup_{\varepsilon > 0} \Acal_1(\Delta,\varepsilon)\right) \bigcap \left(\cap_{\Delta > 0} \cup_{\varepsilon > 0} \Acal_2(\Delta,\varepsilon)\right)$ and its complement are dense in the set of closed subsets of $\Rbb^D$ endowed with the Hausdorff distance.
\end{proposition}

The proof of Proposition~\ref{prop:dense} is detailed in Section~\ref{sec_proof_prop:dense}.

This proposition shows that any support $\Mcal$ can be altered by a small perturbation to produce both supports that satisfy (H1) and (H2) and supports that satisfy neither. \textit{A fortiori}, the same is true for (Adep), as on one hand (H1), (H2) and A($\rho$) ensure (Adep) by Theorem~\ref{thm:id} and on the other hand a small perturbation of the signal is enough to no longer satisfy (Adep) by Point (i) of Proposition~\ref{prop:(Adep)prop}.

\paragraph*{Topological genericity}
The sets $\cup_{\varepsilon > 0} \Acal_1(\Delta,\varepsilon)$ are open with respect to the Hausdorff distance, and are increasing in $\Delta$. Therefore, $\Acal$ is a $G_\delta$ set, and by the Proposition above, it is also dense. Equivalently, this means that the complement of $\Acal$ is a meagre set, and therefore the set of all supports satisfying (H1) and (H2) is comeagre, so these assumptions hold generically in the comeagre or $G_\delta$-dense sense.

\paragraph*{Measure theoretical genericity}

Similarly to how ``almost everywhere" (with respect to the Lebesgue measure) is another possible notion of genericity in $\Rbb^D$, we construct a random and small perturbation of $\Rbb^D$ such that any compact set is almost surely transformed into a compact set in $\Bcal_1 \cap \Bcal_2$.

More precisely, for any $\varepsilon > 0$, we define a (random) continuous bijection $f : \Rbb^D \longrightarrow \Rbb^D$ such that almost surely, $|f(x) - x| \leq \varepsilon$ for all $x \in \Rbb^D$, and such that if $\Mcal$ is compact, then $f(\Mcal)$ is in $\Bcal_1 \cap \Bcal_2$ almost surely. This random bijection does not depend on which support $\Mcal$ is considered, and can for instance be seen as a modeling of the imperfections of ``realistic" supports, or as a way to introduce a Bayesian prior on the support. In that sense, compact supports are almost surely in $\Bcal_1 \cap \Bcal_2$, and thus compactly supported random variables almost surely satisfy (Adep).

There is no canonical way to define a random perturbation of $\Rbb^D$. Our approach is to tile the space with simplices, then add a small perturbation to each vertex of the tiling, keeping the transformation linear inside each simplex. Visually, this results in a small, random crumpling of the Euclidean space.

\paragraph*{Simplicial tiling of $\Rbb^D$}

Let us recall a few definitions about simplicial complexes. For any $k \in \{0, \dots, D\}$, a $k$-simplex of $\Rbb^D$ is the convex hull of $(k+1)$ affinely independent points of $\Rbb^D$.
A simplicial complex $\Pcal$ is a set of simplices such that every face of a simplex from $\Pcal$ is also in $\Pcal$, and the non-empty intersection of any two simplices $F_1, F_2 \in \Pcal$ is a face of both $F_1$ and $F_2$. $\Pcal$ is a homogeneous simplicial $D$-complex if each simplex of dimension less than $D$ of $\Pcal$ is the face of a $D$-simplex of of $\Pcal$. For any simplex $F$, we write $\Int(F)$ its relative interior.
Finally, a homogeneous simplicial $D$-complex $\Pcal$ is called a simplicial tiling of $A \subset \Rbb^D$ if the relative interior of its simplices form a partition of $A$. Note that the facets of $\Pcal$, that is, its $D$-simplices, do not necessarily form a partition of $A$: two facets can have a non-empty intersection when they share a face.

First, consider a finite simplicial tiling of the hypercube $[0,1]^D$, and extend it to $\Rbb^D$ by mirroring it along the hyperplanes orthogonal to the canonical axes crossing them at integer coordinates. Formally, for any $k = (k_1, \dots, k_D) \in \Zbb^D$, the hypercube $\prod_{i=1}^D [k_i, k_i + 1]$ contains the tiling of $[0,1]^D$, mirrored along axis $i$ if and only if $k_i$ is odd. The faces of the hypercubes defined in this way match, as each pair of hypercubes sharing a face are mirrors of each other with respect to that face.
Thus, the resulting tiling $\Pcal$ is a simplicial tiling of $\Rbb^D$.

Let $(x_n)_{n \in \Nbb}$ be the sequence of vertices of the simplicial tiling $\Pcal$ (i.e. its 0-simplices). We identify each simplex $F \in \Pcal$ with the set of its 0-dimensional faces $\{x_i\}_{i \in I}$, and write $F_I$ in that case. Note that the set $I$ is unique for any given simplex $F$ and characterizes $F$.

\paragraph*{Perturbation of the tiling}

Fix a small $r > 0$. Let $(\varepsilon_n)_{n \in \Nbb}$ be a sequence of i.i.d. uniform variables on $[-r,r]^D$, and define $\Pcal^\varepsilon$ the simplicial complex defined by
\begin{equation*}
    \Pcal^\varepsilon = \{ \{x_i + \varepsilon_i\}_{i \in I} : \{x_i\}_{i \in I} \in \Pcal \}.
\end{equation*}

Note that since the original tiling of $[0,1]^D$ was finite, there exists $r_0 > 0$ such that for any $(\varepsilon_n)_{n \in \Nbb} \in ([-r_0,r_0]^D)^\Nbb$, the vertices of any simplex in $\Pcal$ are still affinely independent after being moved according to $\varepsilon$ and any two simplices $F, F' \in \Pcal$ sharing a face $F''$ (resp. with no intersection) are transformed into two simplices of $\Pcal^\varepsilon$ that share exactly the transformation of $F''$ (resp. with no intersection), so that $\Pcal^\varepsilon$ is indeed a simplicial complex.
Finally, $\Pcal^\varepsilon$ still covers $\Rbb^D$ (as seen when moving each vertex in $[-1,2]^D$ one after the other along a continuous path, showing that no hole is created in the covering of $[0,1]^D$ at any point in time), so for any $r \in (0,r_0]$, $\Pcal^\varepsilon$ is almost surely a simplicial tiling of $\Rbb^D$.

Since the relative interiors of the simplices of $\Pcal$ define a partition of $\Rbb^D$, for each $z \in \Rbb^D$, there exists exactly one face $F_I \in \Pcal$ such that $z \in \Int(F_I)$. Writing $z = \sum_{i \in I} \alpha_i x_i$ (for $\alpha \in (0,1]^{|I|}$ such that $\sum_{i \in I} \alpha_i = 1$), we define the image of $z$ by the perturbation as $f^\varepsilon(z) = \sum_{i \in I} \alpha_i (x_i + \varepsilon_i)$. In other words, each simplex is deformed according to the linear transformation given by the perturbation of its vertices.

The mapping $f^\varepsilon$ is a (random) bijective and continuous transformation of $\Rbb^D$ that is ``small", in the sense that almost surely, $\sup_{z \in \Rbb^D} \|z - f^\varepsilon(z)\| \leq r$.

Note that the transformation $f^\varepsilon$ can be made with arbitrarily small granularity: the same approach works when considering tilings of $[0,\delta]^D$ for any $\delta > 0$ instead of $[0,1]^D$ (up to changing $r$). We may also iterate several random independent transformations $f^{\varepsilon^{(1)}} \circ \dots \circ f^{\varepsilon^{(m)}}$ for $m \geq 1$, and the transformation of $\Mcal$ will still almost surely belong to $\Bcal_1 \cap \Bcal_2$.

\begin{theorem}
\label{th_genericity}
Let $r \in (0,r_0]$ with $r_0$ as above, $\varepsilon = (\varepsilon_n)_{n \in \Nbb}$ be a sequence of i.i.d. uniform r.v. on $[-r,r]^D$, $\delta > 0$, and $f^{\varepsilon}$ be the bijective transformation of $\Rbb^D$ defined above.

Then for any (random) continuous mapping $H : \Rbb^D \rightarrow \Rbb^D$ that is independent of $\varepsilon$, the mapping $F : z \longmapsto \delta f^{\varepsilon}(\frac{H(z)}{\delta})$ satisfies: for any compact set $\Mcal \subset \Rbb^D$, $F(\Mcal) \in \Bcal_1 \cap \Bcal_2$ a.s..
\end{theorem}

The proof of Theorem~\ref{th_genericity} is detailed in Section~\ref{app_proof_genericity}.

This shows that for any compact set $\Mcal \in \Rbb^D$, a small change into the set $F(\Mcal)$ where $F$ is a transformation of $\Rbb^D$ of the type described in the Theorem almost surely results in a set in $\Bcal_1 \cap \Bcal_2$.


\subsection{Proof of Proposition~\ref{prop:dense}}
\label{sec_proof_prop:dense}

First, let us show that the set
\begin{equation*}
\Acal := \left(\cap_{\Delta > 0} \cup_{\varepsilon > 0} \Acal_1(\Delta,\varepsilon)\right) \bigcap \left(\cap_{\Delta > 0} \cup_{\varepsilon > 0} \Acal_2(\Delta,\varepsilon)\right)
\end{equation*}
is dense.

Let $\delta > 0$ and let $\Mcal$ be a closed subset of $\Rbb^D$, we show that there exists a closed $\Mcal'$ in $\cap_{\Delta > 0} (\Acal_1(\Delta,\delta) \cap \Acal_2(\Delta,\delta))$ (and thus in $\Acal$) such that $d_{H}(\Mcal,\Mcal') \leq 8 \delta$.

For $k,l \in \{1, \ldots, D\}$ with $k \leq l$, write $\pi^{(k:l)}$ the projection $\pi^{(k:l)} : (x_1, \ldots, x_D) \in \Rbb^D  \mapsto (x_k, \ldots, x_l) \in \Rbb^{l-k+1}$ and $\pi^{(k)} = \pi^{(k:k)}$.

Let $z = (z_1, z_2) \in \Mcal$ with $z_1 = \pi^{(1:d_1)}(z)$ and $z_2 = \pi^{(d_1+1:D)}(z)$, $\Mcal'$ is defined by cutting the space in half through $z$ orthogonally to the space of the first $d_1$ cooordinates and spreading the two halves apart, connecting them by a single segment to ensure it is in $\Acal_2(\Delta,\delta)$, then cut and connect again orthogonally to the $(d_1+1)$-th axis to be in $\Acal_1(\Delta,\delta)$.

Formally, define $\Mcal'$ as the union of:
\begin{itemize}
    \item $\{ y \,|\, y = (y_1,y_2) \in \Mcal, \pi^{(1)}(y_1) \leq \pi^{(1)}(z_1) \text{ and } \pi^{(1)}(y_2) \leq \pi^{(1)}(z_2) \}$,
    \item $\{ (y_1 + 4\delta (1, 0, \dots, 0), y_2) \,|\, y = (y_1,y_2) \in \Mcal, \pi^{(1)}(y_1) \geq \pi^{(1)}(z_1) \text{ and } \pi^{(1)}(y_2) \leq \pi^{(1)}(z_2) \}$,
    \item $\{ (y_1, y_2 + 4\delta (1, 0, \dots, 0)) \,|\, y = (y_1,y_2) \in \Mcal, \pi^{(1)}(y_1) \leq \pi^{(1)}(z_1) \text{ and } \pi^{(1)}(y_2) \geq \pi^{(1)}(z_2) \}$,
    \item $\{ (y_1 + 4\delta (1, 0, \dots, 0), y_2 + 4\delta (1, 0, \dots, 0)) \,|\, y = (y_1,y_2) \in \Mcal, \pi^{(1)}(y_1) \geq \pi^{(1)}(z_1) \text{ and } \pi^{(1)}(y_2) \geq \pi^{(1)}(z_2) \}$,
    \item the segments between $z$ and $(z_1 + 4\delta (1, 0, \dots, 0), z_2)$ and between $z$ and $(z_1, z_2 + 4\delta (1, 0, \dots, 0))$.
\end{itemize}
An illustration of this construction is given in Figure~\ref{fig_illustration1}.

\begin{figure}[h]
    \centering
    \includegraphics[scale=0.7]{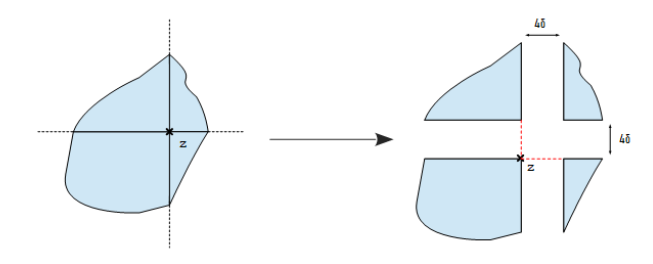}
    \caption{Transforming $\Mcal$ into a set $\Mcal' \in \Bcal_1 \cap \Bcal_2$}
    \label{fig_illustration1}
\end{figure}

By construction, the Hausdorff distance between this set $\Mcal'$ and $\Mcal$ is smaller than $8\delta$ (the points in the first four sets have moved at most $8\delta$ and the segments are at distance at most $8\delta$ of $z$). $\Mcal'$ is also closed, and taking $x = (z_1 + 2\delta (1, 0, \dots, 0), z_2)$ and $x_2 = (z_1, z_2 + 2\delta (1, 0, \dots, 0))$ in the definition of $\Acal_1(\Delta,\delta)$ and $\Acal_2(\Delta,\delta)$ is enough to check that $\Mcal' \in \Acal_1(\Delta,\delta) \cap \Acal_2(\Delta,\delta)$ for any $\Delta > 0$.

To show that the complement of $\Acal$ is dense, let $\Mcal$ be a closed subset of $\Rbb^D$ and $\eta > 0$, and let $\Mcal' = \{x+y \,|\, x \in \Mcal, y \in [-\eta,\eta]^D \}$. Then $d_H(\Mcal,\Mcal') \leq \eta \sqrt{D}$ by construction, and for any $\Delta \leq 2\eta$ and $\varepsilon > 0$, $\Mcal' \notin \Acal_1(\Delta,\varepsilon)$, and thus $\Mcal' \in \Acal^\complement$ where $\Acal^\complement$ is the complement of the set $\Acal$.

Note that if $\Mcal$ is the support of a random variable $X$, then $\Mcal'$ is the support of $X + Y$, where $Y$ is a uniform random variable on $[-\eta,\eta]^D$ that is independent of $X$. In that case, by Proposition~\ref{prop:(Adep)prop} (i), $X+Y$ is a small perturbation of $X$ that does not satisfy (Adep).

\subsection{Proof of Theorem~\ref{th_genericity}}
\label{app_proof_genericity}

Let $\Mcal$ be a compact set of $\Rbb^D$. Since $\varepsilon$ and $H$ are independent, and thus $f^\varepsilon$ and $H(\Mcal)$ are independent, writing $\mu_H$ the distribution of $H(\Mcal)$:
\begin{equation*}
\Pbb(F(\Mcal) \in \Bcal_1 \cap \Bcal_2)
    = \int \Pbb\left(f^\varepsilon(\frac{h}{\delta}) \in \Bcal_1 \cap \Bcal_2\right) d\mu_H(h)
    = 1,
\end{equation*}
provided that for any compact set $\Mcal' \in \Rbb^D$, $f^\varepsilon(\Mcal') \in \Bcal_1 \cap \Bcal_2$ a.s..

Thus, it suffices to show that for any compact set $\Mcal \in \Rbb^D$, almost surely, $f^\varepsilon(\Mcal)$ is in the set $\Bcal_1$ from Theorem~\ref{prop:idgeneral}. The proof for $\Bcal_2$ is identical.

We will show that $\text{Card}(\argmax_{z \in f^\varepsilon(\Mcal)} \pi^{(1)}(z)) = 1$, where $\pi^{(1)}(z)$ is the first coordinate of $z$.
First, since $\Mcal$ is compact and $f^\varepsilon$ is continuous, $f^\varepsilon(\Mcal)$ is compact, therefore the supremum of $\pi^{(1)}$ is reached at least at one point.

\begin{lemma}
\label{lemma_genericity}
The two following properties hold almost surely.
\begin{enumerate}
    \item Let $F'_I, F'_J \in \Pcal^\varepsilon$ be two different simplices, then at least one of the two following points holds:
    \begin{itemize}
        \item $\sup_{x \in f^\varepsilon(\Mcal) \cap \Int(F'_I)} \pi^{(1)}(x) \neq \sup_{x \in f^\varepsilon(\Mcal) \cap \Int(F'_J)} \pi^{(1)}(x)$
        \item $\pi^{(1)}$ does not reach its maximum on $f^\varepsilon(\Mcal) \cap \Int(F'_I)$ or does not reach its maximum on $f^\varepsilon(\Mcal) \cap \Int(F'_J)$.
    \end{itemize}

    \item Let $F'_I \in \Pcal^\varepsilon$, then the supremum of $\pi^{(1)}$ on $f^\varepsilon(\Mcal) \cap \Int(F'_I)$ is reached at at most one point of $\Int(F'_I)$.
\end{enumerate}
\end{lemma}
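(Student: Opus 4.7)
The plan is to exploit the absolute continuity of the perturbations $\varepsilon_i$ and to reduce each of the two statements to a countable union of zero-probability events. For any simplex $F_I \in \Pcal$ of the original tiling, $f^\varepsilon$ restricts to an affine bijection $F_I \to F'_I$, and
\begin{equation*}
g_I(z) := \pi^{(1)}(f^\varepsilon(z)) = \pi^{(1)}(z) + \sum_{i \in I} \alpha_i(z) u_i, \qquad u_i := \pi^{(1)}(\varepsilon_i),
\end{equation*}
is an affine function of $z \in F_I$; the $(u_i)_{i \in \Nbb}$ are i.i.d. uniform on $[-r,r]$, hence absolutely continuous. Since $f^\varepsilon(\Mcal) \cap \Int(F'_I) = f^\varepsilon(\Mcal \cap \Int(F_I))$, the two statements can be recast in terms of $g_I$ on $\Mcal \cap \Int(F_I)$.

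For Point 2, fix $F_I$ and set $K_I := \overline{\Mcal \cap \Int(F_I)}$, a compact subset of $F_I$. The support function of the compact convex set $\mathrm{conv}(K_I)$ is convex on $\Rbb^{\dim F_I}$, hence differentiable Lebesgue-almost everywhere by Rademacher's theorem; at any point of differentiability the argmax of the associated linear functional on $\mathrm{conv}(K_I)$ is a singleton $\{z^\star\}$, which is extremal and therefore lies in $K_I$. Since $\{\alpha_i\}_{i \in I}$ is a basis of the space of affine functions on $F_I$, the map $(u_i)_{i \in I} \mapsto \nabla g_I$ is a linear surjection onto $\Rbb^{\dim F_I}$, so the absolutely continuous law of $(u_i)_{i \in I}$ assigns zero probability to the pull-back of the Lebesgue-null set of ``bad'' gradients. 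Almost surely, therefore, the argmax of $g_I$ on $K_I$ is a unique point $z^\star$, and the chain $\sup_{\Mcal \cap \Int(F_I)} g_I = \max_{K_I} g_I = g_I(z^\star)$ forces any $y \in \Mcal \cap \Int(F_I)$ attaining this supremum to equal $z^\star$, so the set of attainers has at most one element. A union bound over the countable family of simplices in $\Pcal$ concludes Point~2.

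For Point 1, fix two distinct simplices $F_I \neq F_J$ of $\Pcal$. Because $I \neq J$, up to a swap one can pick $i_0 \in I \setminus J$. Conditioning on $\varepsilon_{-i_0} := (\varepsilon_i)_{i \neq i_0}$, the quantity $M_J := \sup_{\Mcal \cap \Int(F_J)} g_J$ becomes deterministic and $M_I$ becomes a function of $u := u_{i_0}$ alone, namely $M_I(u) = \sup_{y \in \Mcal \cap \Int(F_I)}[\alpha_{i_0}(y) u + C(y)]$ for some $C(y)$ not depending on $u$. Since $\alpha_{i_0}(y) > 0$ for every $y \in \Int(F_I)$, if $M_I(u_0)$ is attained at some $y^\star \in \Mcal \cap \Int(F_I)$ then for every $u > u_0$,
\begin{equation*}
M_I(u) \geq \alpha_{i_0}(y^\star)(u - u_0) + M_I(u_0) > M_I(u_0),
\end{equation*}
so the set $\{u \in [-r,r] : M_I(u) = M_J \text{ and } M_I(u) \text{ is attained}\}$ contains at most one point. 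As $u$ is uniform on $[-r,r]$ conditionally on $\varepsilon_{-i_0}$, this conditional event has probability zero; integrating out the conditioning and summing over the countably many ordered pairs of distinct simplices yields Point~1.

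The main technical obstacle lies in Point 2: the supremum is taken over the non-compact and in general non-convex set $\Mcal \cap \Int(F_I)$, so the Rademacher/support-function argument does not apply directly. The remedy is to transfer the problem to the compact set $K_I = \overline{\Mcal \cap \Int(F_I)}$, then to use the extremality of the unique maximizer of a linear functional on $\mathrm{conv}(K_I)$ to transport uniqueness back to $\Mcal \cap \Int(F_I)$; the verification that any attained supremum on $\Mcal \cap \Int(F_I)$ must coincide with the unique argmax on $K_I$ is the step that closes the argument.
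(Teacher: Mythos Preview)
Your proof is correct. For Point~1 you take essentially the same route as the paper: isolate a vertex $i_0$ present in one index set but not the other, condition on the remaining perturbations, and use the strict positivity of the barycentric weight $\alpha_{i_0}$ on $\Int(F_I)$ to show that the map $u \mapsto M_I(u)$ is strictly increasing at any point where the supremum is attained, so the level set $\{M_I = M_J\}$ intersected with the attained set has at most one point. The paper packages the same idea through the convex function $h(e)=\sup_\alpha u_{\alpha,J}(e)$ and its subgradients, but the content is identical.

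For Point~2 your argument is genuinely different from the paper's. The paper works one vertex at a time: for each $j\in I$ it views the supremum as a one-dimensional convex function of $\pi^{(1)}(\varepsilon_j)$, uses that a convex function on $\Rbb$ has at most countably many points of non-unique subgradient, and concludes that almost surely every barycentric weight $w_j$ is constant on the set of maximizers, which pins down the maximizer uniquely. You instead treat all coordinates at once: you pass to the compact set $K_I=\overline{\Mcal\cap\Int(F_I)}$, invoke Rademacher's theorem on the support function of $\mathrm{conv}(K_I)$ to get a Lebesgue-null set of ``bad'' directions, and then observe that the affine map $(u_i)_{i\in I}\mapsto \nabla g_I$ is surjective onto the tangent space so that the absolutely continuous law of $(u_i)$ avoids this null set almost surely. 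The payoff of your approach is a cleaner, more conceptual one-shot argument; the payoff of the paper's approach is that it stays entirely within one-dimensional convexity and avoids Rademacher, which makes it slightly more elementary. Both are sound, and your handling of the passage from the non-compact $\Mcal\cap\Int(F_I)$ to $K_I$ (via extremality of the unique maximizer on the convex hull) is the right way to close the gap.
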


A consequence of this lemma is that almost surely, the maximizer of $\pi^{(1)}$ on $f^\varepsilon(\Mcal)$ is unique, as all maximizers of $\pi^{(1)}$ on $f^\varepsilon(\Mcal)$ belong to the relative interior of one simplex of $\Pcal^\varepsilon$, which shows that $f^\varepsilon(\Mcal)$ is almost surely in $\Bcal_1$.

\begin{proof}[Proof of Lemma~\ref{lemma_genericity}]
The following functions will be of use in the proof. For any finite $J \subset \Nbb$ such that $F'_J = \{x_i+\varepsilon_i\}_{i \in J} \in \Pcal^\varepsilon$, for any $j \in J$ and $\alpha \in (0,1]$, let
\begin{multline*}
u_{\alpha,J} : e \in \Rbb \longmapsto
    \sup\Bigg\{ \alpha (\pi^{(1)}(x_j) + e) + \sum_{k \in J \setminus \{j\}} \alpha_k \pi^{(1)}(x_k + \varepsilon_k) \text{, where } \\
        z = \alpha x_j + \sum_{k \in J \setminus \{j\}} \alpha_k x_k \in \Mcal, \  \alpha_k \in (0,1] \text{ and } \alpha + \sum_{k} \alpha_k = 1 \Bigg\}.
\end{multline*}
In other words, $u_{\alpha_J}$ is the supremum of $\pi^{(1)}$ on the slice of $f^\varepsilon(\Mcal) \cap \Int(F'_J)$ that gives weight $\alpha$ to the vertex $(x_j + \varepsilon_j)$. To simplify the notations, let $w_k : z \longmapsto \alpha_k$ be the ``weight" functions.
It is straightforward to check that
\begin{enumerate}
    \item the function $u_{\alpha,J}$ is linear with slope $\alpha$,
    \item $\sup_{x \in f^\varepsilon(\Mcal) \cap \Int(F'_J)} \pi^{(1)}(x) = \sup_{\alpha \in (0,1]} u_{\alpha,J}(\pi^{(1)}(\varepsilon_j))$,
    \item the function $h : \pi^{(1)}(\varepsilon_j) \longmapsto \sup_{x \in f^\varepsilon(\Mcal) \cap \Int(F'_J)} \pi^{(1)}(x)$ (all coordinates of all $\varepsilon_k$ other than $\pi^{(1)}(\varepsilon_j)$ being fixed) is convex,
    \item \label{pt_genericity_4} if the supremum of $\pi^{(1)}$ on the closure of $f^\varepsilon(\Mcal) \cap \Int(F'_J)$ is reached at some point $z \in F'_J$ when $\pi^{(1)}(\varepsilon_j) = e$, then $w_j(z)$ is a sub-gradient of $h$ at $e$,
    \item \label{pt_genericity_5} since the number of points where the sub-gradient of a convex function on $\Rbb$ is not unique is at most countable, almost surely (whether all coordinates of all $\varepsilon_k$ other than $\pi^{(1)}(\varepsilon_j)$ are fixed or not), $h$ has a unique sub-gradient at $\pi^{(1)}(\varepsilon_j)$.
\end{enumerate}

Let us now prove the first point of the lemma.
Let $F'_I = \{x_i+\varepsilon_i\}_{i \in I}$ and $F'_J = \{x_i+\varepsilon_i\}_{i \in J}$ be two different simplices of $\Pcal^\varepsilon$, and let $j \in J \setminus I$ (by exchanging the two simplices, we may assume without loss of generality that $J$ is not a subset of $I$).

Consider the following, conditionally to $(\varepsilon_n)_{n \neq j}$ and $\pi^{(2:D)}(\varepsilon_j)$. Assume that $h(\pi^{(1)}(\varepsilon_j)) = \sup_{x \in f^\varepsilon(\Mcal) \cap \Int(F'_I)} \pi^{(1)}(x)$
(otherwise we are in the first case of the first point of the lemma). We may assume without loss of generality (by point~\ref{pt_genericity_5} above) that the sub-gradient of $h$ at $\pi^{(1)}(\varepsilon_j)$ is unique. Two cases are possible:
\begin{itemize}
    \item the sub-gradient of $h$ at $\pi^{(1)}(\varepsilon_j)$ is 0. Then $\pi^{(1)}$ does not reach its maximum on $f^\varepsilon(\Mcal) \cap \Int(F'_J)$, since if $z$ is a maximizer of $\pi^{(1)}$, then $w_j(z) = 0$ by point~\ref{pt_genericity_4},

    \item the sub-gradient of $h$ at $\pi^{(1)}(\varepsilon_j)$ is positive, so there exists a single point $e$ such that $h(e) = \sup_{x \in f^\varepsilon(\Mcal) \cap \Int(F'_I)} \pi^{(1)}(x)$. Since $\pi^{(1)}(\varepsilon_j)$ is uniform on $[-r,r]$ by construction, we almost surely have $\pi^{(1)}(\varepsilon_j) \neq e$, and thus this second case almost surely never happens.
\end{itemize}

For the second point of the lemma, by points~\ref{pt_genericity_4} and~\ref{pt_genericity_5}, if the set of maximizers of $\pi^{(1)}$ on $f^\varepsilon(\Mcal) \cap \Int(F'_J)$ is a non-empty set $\Zcal$, then for any $j \in J$, almost surely, $w_j$ is constant on $\Zcal$. Since every point $z \in F'_J$ is characterized by the vector $(w_j(z))_{j \in J}$, this shows that $\Zcal$ contains a single point, which concludes the proof.
\end{proof}

\section{Proofs}
\label{sec:proofs}

\subsection{Proof of Proposition~\ref{prop:phihat}}
\label{subsec:prop:phihat}

For any $\nu>0$ and $\zeta \in L^2([-\nu,\nu]^D)$ (resp. $L^\infty([-\nu,\nu]^D)$), write $\|\zeta\|_{2,\nu}$ (resp. $\|\zeta\|_{\infty,\nu}$) its $L^2$ (resp. $L^\infty$) norm.

Let $\rho_0 \in [1,2)$. Let us start with some preliminary results.

From~\cite{InferSphere}, Section 7.1, for all $\nu>0$, there exists $b>0$, $\eta > 0$, $c_M > 0$ and $c_Z > 0$ such that, writing $\epsilon(u) = b/\log \log(1/u)$, the following properties hold for any $\rho' \in [1,\rho_0]$.
\begin{itemize}
    \item For all $\phi \in \Upsilon_{\rho',S}$ and for all $\zeta \in L^2([-\nu,\nu]^D)$ such that $\phi+\zeta \in \Upsilon_{\rho',S}$ and $\|\zeta\|_{2,\nu} \leq \eta$,
    \begin{equation}
    \label{eq_minoration_M}
        M(\phi+\zeta;\nu|\phi) \geq c_\nu^4 \|\zeta\|_{2,\nu}^{2 + 2\epsilon(\|\zeta\|_{2,\nu})}.
    \end{equation}

    \item For all $n \geq 1$, writing $Z_n(t,\phi) = \sqrt{n} ( \tilde\phi_n(t) - \phi(t) \Phi_{\varepsilon^{(1)}}(t_1)\Phi_{\varepsilon^{(2)}}(t_2) )$, one has for all $\phi \in \Upsilon_{\rho',S}$ and $h \in L^2([-\nu,\nu]^D)$ such that $\phi+\zeta \in \Upsilon_{\rho',S}$,
    \begin{equation}
    \label{eq_ecart_Mn_M}
    |M_n(\phi+\zeta)-M(\phi+\zeta;\nuest|\phi) - (M_n(\phi)-M(\phi;\nuest|\phi))|
        \leq c_M \frac{\|Z_n(\cdot,\phi)\|_{\infty,\nuest}}{\sqrt{n}} \|\zeta\|_{2,\nuest}^{1 - \epsilon(\|\zeta\|_{2,\nuest})}.
    \end{equation}
    
    \item For all $s \in [1,n]$,
    \begin{equation}
    \label{eq_deviations_Z}
    \Pbb(\|Z_n(\cdot, \Phi_X)\|_{\infty,\nuest} \geq c_Z \sqrt{s}) \leq e^{-s}.
    \end{equation}
\end{itemize}
Moreover, from Lemma H.3 of~\cite{LCGL2021supplementary}, there exists a constant $c_T > 0$ such that for all $\rho' \in [1,\rho_0]$, $m \geq \rho' D$ and $\phi \in \Upsilon_{\rho',S}$,
\begin{equation*}
    \|\phi - T_m\phi\|_{\infty,\nuest} \leq c_T (S\nuest)^m m^{-m/\rho' + D}.
\end{equation*}
Let $\rho' \in [1, \rho_0]$ and assume that $m \geq 2\rho' \frac{\log n}{\log \log n}$, then this equation becomes $\|\phi - T_{m}\phi\|_{\infty,\nuest} = O(n^{-2+o_n(1)})$, where $o_n(1)$ denotes a sequence tending to $0$ when $n$ tends to infinity. In particular, there exists $n_0$ such that for all $n \geq n_0$,
\begin{equation}
\label{eq_troncature}
\sup_{\rho' \in [1,\rho_0]}
    \sup_{\nu \in (0,\nuest]}
    \sup_{m \geq 2\rho' \frac{\log n}{\log \log n}}
    \sup_{\phi \in \Upsilon_{\rho',S}} \|\phi - T_{m} \phi\|_{2,\nu} \leq \frac{1}{n}
\end{equation}
and
\begin{equation}
\label{eq_troncature_dansM}
\sup_{\rho' \in [1,\rho_0]}
    \sup_{m \geq 2\rho' \frac{\log n}{\log \log n}}
    \sup_{\phi \in \Upsilon_{\rho',S}} |M_n(\phi) - M_n(T_{m} \phi)|
    \leq c \|\phi - T_{m} \phi\|_{\infty,\nuest}
    \leq \frac{1}{n}
\end{equation}
for some $c>0$ that depends only on $\nuest$, $\rho_0$ and $S$, using that $\sup_{\phi \in \Upsilon_{\rho_0,S}} \|\phi\|_{\infty,\nuest} < +\infty$.
Finally, following the proof of equation (25) of Section A.3 of~\cite{LCGL2021}, for any $\nu' \geq \nu$, $m \geq 1$ and $\phi \in \Cbb_m[X]$
\begin{equation}
\label{eq_extension_support}
    \|\phi\|_{2,\nu'} \leq m^{D/2} (4\frac{\nu'}{\nu})^{m+D/2} \|\phi\|_{2,\nu}.
\end{equation}
Let us now prove the proposition. Let $\rho \in [1,\rho_0]$ such that $\Phi_X \in \Upsilon_{\rho,S} \cap \Hcal$.
By definition, for any $m \geq 1$ and $\rho' \in [\rho,\rho_0]$, $\widehat{\Phi}_{n,m,\rho'}$ is such that $\widehat{\Phi}_{n,m,\rho'} \in \Upsilon_{\rho',S} \cap \Hcal$ and
\begin{align*}
M_n(T_m \widehat{\Phi}_{n,m,\rho'})
    &\leq \inf_{\phi \in \Upsilon_{\rho',S} \cap \Hcal} M_n(T_m \phi) + \frac{1}{n} \\
    &\leq \inf_{\phi \in \Upsilon_{\rho,S} \cap \Hcal} M_n(T_m \phi) + \frac{1}{n} \\
    &\leq M_n(T_{m} \Phi_X) + \frac{1}{n}
\end{align*}
and thus, by~\eqref{eq_troncature_dansM},
\begin{equation}
\label{eq_maj_Mn_phihat}
\sup_{\rho' \in [\rho, \rho_0]}
\sup_{m \geq 2\rho' \frac{\log n}{\log \log n}}
    M_n(\widehat{\Phi}_{n,m,\rho'})
        \leq M_n(\Phi_X) + \frac{3}{n}.
\end{equation}
Therefore, by~\eqref{eq_ecart_Mn_M}, for any $\nu \in (0,\nuest]$, writing $\zeta_{m,\rho'} = \widehat{\Phi}_{n,m,\rho'} - \Phi_X$,
\begin{align}
\nonumber
M(\widehat{\Phi}_{n,m,\rho'} ; \nu | \Phi_X)
    &\leq M(\widehat{\Phi}_{n,m,\rho'} ; \nuest | \Phi_X) \\
\label{eq_maj_Mn_phihat2}
    &\leq c_M \frac{\|Z_n(\cdot, \Phi_X)\|_{\infty, \nuest}}{\sqrt{n}} \|\zeta_{m,\rho'}\|_{2,\nuest}^{1 - \epsilon(\|\zeta_{m,\rho'}\|_{2,\nuest})}
        + \frac{3}{n}.
\end{align}
Let us show that we may apply~\eqref{eq_minoration_M}. Combining~\eqref{eq_maj_Mn_phihat} with Lemma A.1 of~\cite{LCGL2021} shows that for any $\delta > 0$, there exist $c_\eta > 0$ and $n_0$ (which do not depend on $\rho$) such that for all $n \geq n_0$,
with probability at least $1 - 4e^{-c_\eta n}$,
\begin{equation*}
\sup_{\rho' \in [\rho, \rho_0]}
\sup_{m \geq 2\rho \frac{\log n}{\log \log n}}
    M(\widehat{\Phi}_{n,m,\rho'} ; \nuest | \Phi_X) \leq \delta.
\end{equation*}
In addition, since $\Upsilon_{\rho_0,S} \cap \Hcal$ is compact in $L^2([-\nuest,\nuest]^D)$, $\phi \mapsto M(\phi;\nuest|\Phi_X)$ is continuous on $L^2([-\nuest,\nuest]^D)$, and $M(\phi ; \nuest | \Phi_X) = 0$ implies $\phi = \Phi_X$ for all $\phi \in \Hcal \cap \Upsilon_{\rho_0,S}$ by Theorem~\ref{thm:idG}, there exists $\delta > 0$ such that
\begin{equation*}
\inf_{\phi \in \Upsilon_{\rho_0,S} \cap \Hcal \text{ s.t. } \|\phi - \Phi_X\|_{2,\nuest} \geq \eta}
    M(\phi ; \nuest | \Phi_X) > \delta.
\end{equation*}
Therefore, there exist $c_\eta > 0$ and $n_0$ (which do not depend on $\rho$) such that for all $n \geq n_0$, with probability at least $1 - 4e^{-c_\eta n}$,
\begin{equation}
\label{eq_h_unif_small}
\sup_{\rho' \in [\rho, \rho_0]}
\sup_{m \geq 2\rho \frac{\log n}{\log \log n}}
    \|\zeta_{m,\rho'}\|_{2,\nuest} \leq \eta,
\end{equation}
which is what we need to apply~\eqref{eq_minoration_M}.

Fix now $\nu \in (0,\nuest]$, $c(\nu) > 0$ and $E > 0$ such that $\Qbb \in \Qcal^{(D)}(\nu,c(\nu),E)$. In particular, $c_\nu \geq c(\nu) > 0$. Then, by~\eqref{eq_minoration_M} and~\eqref{eq_maj_Mn_phihat2},
\begin{equation}
\label{eq_majorationh_step1}
\|\zeta_{m,\rho'}\|_{2,\nu}^{2 + 2\epsilon(\|\zeta_{m,\rho'}\|_{2,\nu})}
    \leq \frac{2}{c(\nu)^4} \max\left( c_M \frac{\|Z_n(\cdot, \Phi_X)\|_{\infty, \nuest}}{\sqrt{n}} \|\zeta_{m,\rho'}\|_{2,\nuest}^{1 - \epsilon(\|\zeta_{m,\rho'}\|_{2,\nuest})}, \frac{3}{n} \right).
\end{equation}
By~\eqref{eq_troncature} and~\eqref{eq_extension_support}, assuming $m \in [2\rho' \frac{\log n}{\log \log n}, C \frac{\log n}{\log \log n}]$ in the following series of inequalities (for some fixed $C > 2\rho'$),
\begin{align}
\nonumber
\|\zeta_{m,\rho'}\|_{2,\nuest}
    &\leq 2\max(\|T_{m} \zeta_{m,\rho'}\|_{2,\nuest}, \frac{3}{n}) & \text{by~\eqref{eq_troncature}} \\
\nonumber
    &\leq 2\max\left(\|T_{m} \zeta_{m,\rho'}\|_{2,\nu} m^{\frac{D}{2}} (4\frac{\nuest}{\nu})^{m+\frac{D}{2}}, \frac{3}{n}\right) & \text{by~\eqref{eq_extension_support}} \\
\nonumber
    &\leq 4\max\left(\|\zeta_{m,\rho'}\|_{2,\nu} m^{\frac{D}{2}} (4\frac{\nuest}{\nu})^{m+\frac{D}{2}}, \frac{3 m^{\frac{D}{2}} (4\frac{\nuest}{\nu})^{m+\frac{D}{2}}}{n}\right) & \text{by~\eqref{eq_troncature}} \\
\label{eq_extension_support_2}
    &\leq n^{\epsilon(1/n)} \max\left(\|\zeta_{m,\rho'}\|_{2,\nu}, \frac{1}{n} \right),
\end{align}
up to increasing the constant $b$ in the definition of $u \mapsto \epsilon(u)$, which can be done without loss of generality. Together with~\eqref{eq_majorationh_step1} and~\eqref{eq_deviations_Z}, one gets for all $s \in [1, c_\eta n]$ (assuming $c_\eta \leq 1$ without loss of generality), with probability at least $1 - 4e^{-c_\eta n} - e^{-s} \geq 1 - 5 e^{-s}$ (on the event where $\|Z(\cdot,\Phi_X)\|_{\infty,\nuest} \leq c_Z \sqrt{s}$ and~\eqref{eq_h_unif_small} holds) that for all $\rho' \in [\rho,\rho_0]$ and $m \in [2\rho' \frac{\log n}{\log \log n}, C \frac{\log n}{\log \log n}]$,
\begin{align*}
\|\zeta_{m,\rho'}\|_{2,\nu}^{2 + 2\epsilon(\|\zeta_{m,\rho'}\|_{2,\nu})}
    &\leq c \max\left(\sqrt{\frac{s}{n}} n^{\epsilon(1/n)} \left(\|\zeta_{m,\rho'}\|_{2,\nu} \vee \frac{1}{n}\right)^{1 - \epsilon(\|\zeta_{m,\rho'}\|_{2,\nuest})}, \frac{1}{n} \right)
\end{align*}
for some constant $c > 0$ that does not depend on $\rho$, $\rho'$ or $m$. Since $\epsilon$ is increasing, recalling that $\|\zeta_{m,\rho'}\|_{2,\nuest} \leq \eta$ on the event considered, by~\eqref{eq_extension_support_2},
\begin{align*}
\epsilon(\|\zeta_{m,\rho'}\|_{2,\nuest})
    &\leq 
        \begin{cases}
        \max( \epsilon( \|\zeta_{m,\rho'}\|_{2,\nu} n^{\epsilon(1/n)}), \epsilon(n^{-1 + \epsilon(1/n)}) )  \quad &\text{if $\|\zeta_{m,\rho'}\|_{2,\nu} \leq n^{-2\epsilon(1/n)}$,} \\
        \epsilon(\eta) &\text{always,}
        \end{cases} \\
    &\leq 
        \begin{cases}
        2 \epsilon(1/n) \quad &\text{if $\|\zeta_{m,\rho'}\|_{2,\nu} \leq n^{-2\epsilon(1/n)}$,} \\
        \epsilon(\eta) &\text{always,}
        \end{cases} \\
    &\leq [\epsilon(\eta) \textbf{\text{ or }} 2 \epsilon(1/n)]
\end{align*}
for $n$ large enough (depending on $b$), up to decreasing $\eta$, where for compactness of notations, $[A \textbf{\text{ or }} B]$ means $\min(A,B)$ if $\|\zeta_{m,\rho'}\|_{2,\nu} \leq n^{-2\epsilon(1/n)}$ and $A$ otherwise in the following.
Gathering the two previous equations shows that either
\begin{equation*}
\|\zeta_{m,\rho'}\|_{2,\nu}^{1 + 3 [ \epsilon(\eta) \textbf{\text{ or }} 2 \epsilon(1/n) ]}
    \leq c \sqrt{\frac{s}{n^{1 - 2\epsilon(1/n)}}}
\end{equation*}
or
\begin{equation*}
\|\zeta_{m,\rho'}\|_{2,\nu}^{2 + 2 [ \epsilon(\eta) \textbf{\text{ or }} 2 \epsilon(1/n) ]}
    \leq \frac{c}{n}.
\end{equation*}
Therefore, assuming $3\epsilon(\eta) \leq 1$ without loss of generality, $\|\zeta_{m,\rho'}\|_{2,\nu} \leq n^{-\epsilon(1/n)}$ as soon as $s \leq n^{1 - 10 \epsilon(1/n)} / c^2$ and thus, up to changing the constant $c$, for $n$ large enough and for all $s \in [1, n^{1-10\epsilon(1/n)}/c^2]$, with probability at least $1 - 4e^{-c_\eta n} - e^{-s}$, for all $\rho' \in [\rho,\rho_0]$ and $m \in [2\rho' \frac{\log n}{\log \log n}, C \frac{\log n}{\log \log n}]$,
\begin{align*}
\|\zeta_{m,\rho'}\|_{2,\nu}^2
    \leq c \left( \frac{s}{n^{1 - 2\epsilon(1/n)}} \right)^{1 - 6\epsilon(1/n)}.
\end{align*}
Finally, note that $4e^{-c_\eta n} e^{n^{1-10 \epsilon(1/n)}} \longrightarrow 0$, so that the probability that the last equation holds is larger than $1 - 2e^{-s}$ for $n$ large enough, which concludes the proof for the version with $\widehat\Phi_{n,m,\rho'}$. The version for $T_m \widehat\Phi_{n,m,\rho'}$ follows from this and~\eqref{eq_troncature}.

\subsection{Proof of Lemma~\ref{lem_minoration_gbar}}
\label{sec_proof_lem_minoration_gbar}

Let $y \in \Mcal_{G} \cap \Kcal$.
By property (III) of $\psi_A$,
\begin{align*}
\bar{g}(y)
    &= \frac{1}{h^D} \int \psi_A\left(\frac{\|y - u\|}{h}\right) dG(u) \\
    &\geq \frac{1}{h^D} \int_{\|u-y\|_{2}\leq c_{A}h} \psi_A(\frac{\|y - u\|}{h}) dG(u) \\
    &\geq \frac{1}{h^D} d_{A} G(B(y,c_{A}h)) \\
    &\geq \frac{1}{h^D} d_{A} a (c_{A}h)^d.
\end{align*}

\subsection{Proof of Lemma~\ref{lemma:sup}}
\label{sec_proof_lemmasup}

Recall the definition of $\bar{g}$: for all $y \in \Rbb^D$,
\begin{equation*}
\bar{g}(y) = \frac{1}{h^D} \int \psi_A(\frac{\|y - u\|}{h}) dG(u).
\end{equation*}
Let $C_1 > 0$ and $\epsilon >0$. By Property (V) of $\psi_A$ and~\eqref{lim}, there exists $T > 0$ (depending on $A$ and $C_1$) such that for any $t \geq T$, $\psi_{A}(t) \leq C_1 \exp(- \beta_A t^{A/(A+1)})$.
Take $y \in \Rbb^D$ such that $d(y,\Mcal_{G}) > (\frac{\varepsilon}{\beta_A})^{\frac{A+1}{A}} h \log(\frac{1}{h})^{\frac{A+1}{A}}$, then for all $u \in \Mcal_{G}$, $\frac{\|y-u\|}{h} \geq (\beta_A^{-1} \log(\frac{1}{h^\varepsilon}))^{\frac{A+1}{A}}$, therefore there exists $h_0 > 0$ depending only on $\varepsilon$, $D$, $A$ and $T$ (thus $C_1$ such that $h \leq h_0$ implies $\frac{\|y-u\|}{h} \geq T$ and thus
\begin{align*}
\psi_A(\frac{\|y-u\|}{h})
    &\leq C_1 \exp{\{ - \beta_A (\frac{\|y-u\|}{h})^{A/(A+1)} \}} \\
    &\leq C_1 \exp{\{ - \log(\frac{1}{h^{\varepsilon}}) \}} \\
    &= C_1 h^{\varepsilon},
\end{align*}
and finally $\bar{g}(y) \leq C_1 (\frac{1}{h})^{D-\varepsilon}$ since $G$ is a probability distribution. Lemma~\ref{lemma:sup} follows by taking $\varepsilon=D$.

\subsection{Proof of Lemma~\ref{lemma:gamma}}
\label{sec_proof_lemmagamma}

For $y \in \Rbb^D$,
\begin{equation*}
\widehat{g}_{n,\kappa}(y) - \bar{g}(y)
    = (\frac{1}{2 \pi})^D \int e^{-it^{\top}y} \Fcal[\psi_A](th) ( T_{m_{\kappa}} \widehat{\Phi}_{n,1/\kappa}(t) - \Phi_X(t)) dt.
\end{equation*}
Since $\Fcal[\psi_A](th)$ is 0 for $\|t\|_2 > 1/h$,
\begin{align}
\nonumber
\widehat{g}_{n,\kappa}(y) - \bar{g}(y)
    &= (\frac{1}{2 \pi})^D \int e^{-it^{\top}y} \Fcal[\psi_A](th) ( T_{m_{\kappa}} \widehat{\Phi}_{n,1/\kappa}(t) - \Phi_X(t)) 1|_{\|t\|_2 \leq 1/h} dt \\
\nonumber 
    &= \Fcal^{-1}[\Fcal[\psi_{h}] \lbrace ( T_{m_{\kappa}} \widehat{\Phi}_{n,1/\kappa} - \Phi_X) 1|_{\|t\|_2 \leq 1/h} \rbrace](y) \\
\label{convfourier}
    &= \Fcal^{-1} [ \Fcal[\psi_{A,h}] ] * \Fcal^{-1} [ ( T_{m_{\kappa}} \widehat{\Phi}_{n,1/\kappa} - \Phi_X) 1|_{\|t\|_2 \leq 1/h} ](y).
\end{align}
By Young's convolution inequality,
\begin{equation*}
\| \widehat{g}_{n,\kappa} - \bar{g} \|_{\infty} \leq \| \Fcal^{-1} [ \Fcal[\psi_{A,h}] ]\|_2 \|\Fcal^{-1} [ ( T_{m_{\kappa}} \widehat{\Phi}_{n,1/\kappa} - \Phi_X) 1|_{\|t\|_2 \leq 1/h} ] \|_2.
\end{equation*}
Finally, using Parseval's equality and the fact that $\Fcal^{-1}[\Fcal[\psi_{A,h}]] = \psi_{A,h}$,

\begin{equation*}
\| \widehat{g}_{n,\kappa} - \bar{g} \|_{\infty} \leq \|\psi_{A,h} \|_2
\| T_{m_{\kappa}} \widehat{\Phi}_{n,1/\kappa} - \Phi_X\|_{2, 1/h},
\end{equation*}
and use \eqref{psi3} to conclude the proof.

\subsection{Proof of Theorem~\ref{theorem:rateH}}
\label{proof:thmrateH}

Let $\kappa_{0}\in (1/2,1]$,  $\nu\in(0,\nu_\text{est}]$, $c(\nu)>0$, $E>0$, $S>0$ and $C>0$. Let $\kappa\in[\kappa_{0},1]$, $\Qbb\in \Qcal^{(D)} (\nu,c({\nu}),E)$ and $G \in St_{\Kcal}(a,d,r_0)\cap \Lcal(\kappa,S,{\cal H})$.

Using inequalities analogous to (28)-(29) p.17 of~\cite{LCGL2021}, we get that for all $\kappa' \in [\kappa_0, \kappa]$ and all integer $m$,
\begin{equation}
\label{paramphi}
\| T_{m} \widehat{\Phi}_{n,1/\kappa'} - \Phi_X \|^2_{2, 1/h}
    \leq 4 U(h) + 4 m^D ( 2 + 2 \frac{1}{h \nu})^{2m + D} \Bigg (2  V(\nu) + \| \widehat \Phi_{n,1/\kappa'} - \Phi_X \|^2_{2, \nu} \Bigg ),
\end{equation}
where
\begin{align*}
U(h) &= c h^{-D -2m - 2/\kappa'} S^{2m} m^{-2\kappa' m + 2D} \exp(2 \kappa' (S/h)^{1/\kappa'}) \\
        \text{and} \quad
    V(\nu) &= c (S \nu)^{2 m + 2/\kappa'} m^{-2\kappa' m + 2D}.
\end{align*}
Thus, applying Lemma~\ref{lemma:gamma} and using $h = c_h S m_{\kappa'}^{-\kappa'}$, there exists $C > 0$ such that on the event where \eqref{paramphi} holds:
\begin{multline}
\label{eq:gammasquare}
    \Gamma_{n,\kappa'}^2 \leq C (c_h S)^{-2D-2m_{\kappa'} - 2/\kappa'} m_{\kappa'}^{2D (\kappa'+1) + 2 } S^{2 m_{\kappa'}} \text{exp}(2 \kappa' c_h^{-1/\kappa'} m_{\kappa'}) \\
    + C m_{\kappa'}^{D(1+\kappa')} ( 2 + 2 \frac{m_{\kappa'}^{\kappa'}}{c_h S \nu})^{2m_{\kappa'} + D} \Bigg ((S \nu)^{2 m_{\kappa'} + 2/\kappa' } m_{\kappa'}^{-2 \kappa' m_{\kappa'} +2 D}
        + \| \widehat \Phi_{n,1/\kappa'} - \Phi_X \|^2_{2, \nu}
    \Bigg).
\end{multline}
The first term of the upper bound is upper bounded as follows.
\begin{align}
& (c_h S)^{-2D-2m_{\kappa'} - 2/\kappa'} m_{\kappa'}^{2D (\kappa'+1) + 2 } S^{2 m_{\kappa'}} \text{exp}(2 \kappa' c_h^{-1/\kappa'} m_{\kappa'}) \nonumber \\
&= S^{-2D-2/\kappa'} \exp{ \{ (-2D-2m_{\kappa'} - 2/\kappa') \log(c_h) + (2D(\kappa' +1) +2) \log(m_{\kappa'}) + 2 \kappa' c_{h}^{-1/\kappa'} m_{\kappa'} \}} \nonumber \\
&\leq C \exp{ \{ (-2\log(c_h) +1) m_{\kappa'} +  2(D(\kappa' + 1) + 1) \log(m_{\kappa'})\}} \label{eq1}\\
&\leq C \exp{ \{ (-2\log(c_h) + 3 + 2D(\kappa' + 1) ) m_{\kappa'} \}}\label{eq2} ,
\end{align}
for another constant $C>0$, where inequality~\eqref{eq1} holds because $2 \kappa c_{h}^{1/\kappa} > 1$ and inequality~\eqref{eq2} holds because $\log(m_{\kappa}) \leq m_{\kappa}$.
The second term of the upper bound is upper bounded by
\begin{align*}
& m_{\kappa'}^{D(1+\kappa')} ( 2 + 2 \frac{m_{\kappa'}^{\kappa'}}{c_h S \nu})^{2m_{\kappa'} + D}
        \Bigg ((S \nu)^{2 m_{\kappa'} + 2/\kappa' } m_{\kappa'}^{-2 \kappa' m_{\kappa'} +2 D}
        + \| \widehat \Phi_{n,1/\kappa'} - \Phi_X \|^2_{2, \nu}
        \Bigg) \\
    &\leq C \ m_{\kappa'}^{D(1+2\kappa')} (2 \kappa' m_{\kappa'})^{2 \kappa' m_{\kappa'}} (2\kappa')^{-2\kappa' m_{\kappa'}}  (c_h S \nu)^{ -2 m_{\kappa'} - D } \\
        &\hspace{13em} \times \Bigg((S \nu)^{2 m_{\kappa'} + 2/\kappa' } m_{\kappa'}^{-2 \kappa' m_{\kappa'} + 2 D}
        + \| \widehat \Phi_{n,1/\kappa'} - \Phi_X \|^2_{2, \nu}
        \Bigg) \\
    &\leq C \Bigg( \exp{\Bigg \{ (-2\log(c_h) + (3D+2 \kappa')) m_{\kappa'} \Bigg\}} \\
        &\hspace{5em} + (2\kappa' m_{\kappa'})^{2 \kappa' m_{\kappa'}} \exp{\Bigg \{ (-2 \log(c_h) + D(1 + 2\kappa')) m_{\kappa'} \Bigg \}}
        \| \widehat \Phi_{n,1/\kappa'} - \Phi_X \|^2_{2, \nu}
        \Bigg)
\end{align*}
for another constant $C>0$. Putting all together, we get that for yet another constant $C>0$, 
\begin{align*}
    \Gamma_{n,\kappa'}^2 \leq C \max \Bigg( & \exp{ \Bigg \{ (-2\log(c_h) + 3 + 2D(\kappa' + 1) ) m_{\kappa'} \Bigg \}}, \exp{\Bigg \{ (-2\log(c_h) + (3D+2 \kappa')) m_{\kappa'} \Bigg \}}, \\
    & (2\kappa' m_{\kappa'})^{2 \kappa' m_{\kappa'}} \exp{ \Bigg \{ (-2 \log(c_h) + D(1 + 2\kappa')) m_{\kappa'} \Bigg\}}
    \| \widehat \Phi_{n,1/\kappa'} - \Phi_X \|^2_{2, \nu}
    \Bigg).
\end{align*}
Choosing $c_h \geq \exp{\{ 2D + 2\}}$ and $m_{\kappa'}=\frac{1}{4\kappa'} \frac{ \log n}{\log \log n}$
for some $\gamma \in (0,1)$, it follows that
\begin{equation}
\label{eq_majoration_Gamma_generale}
\Gamma_{n,\kappa'}^{2} \leq C e^{-m_{\kappa'}} \Bigg[ 1 \vee n^{1/2}
    \| \widehat \Phi_{n,1/\kappa'} - \Phi_X \|^2_{2, \nu} 
    \Bigg].
\end{equation}
By Proposition~\ref{prop:phihat}, taking $s = \log n$ and $\delta, \delta''$ such that $(1-\delta)(1-\delta'') > 1/2$, we obtain that with probability at least $1-2/n$, for all $\kappa' \leq \kappa$, $\Gamma_{n,\kappa'}^2 \leq C e^{-m_{\kappa'}} \longrightarrow 0$. Note that we could also take $s = n^{1/2 - \delta'''}$ for any $\delta''' > 0$ and still have $\Gamma_{n,\kappa'}^2 \leq C e^{-m_{\kappa'}}$ with probability at least $1 - 2e^{-s}$, up to changing the constant $C$, by picking $\delta$ and $\delta''$ small enough in Proposition~\ref{prop:phihat}.

Now, by Lemma~\ref{lem_minoration_gbar}, for any $h \leq (r_0 / c_A) \wedge 1$,
\begin{align*}
\inf_{y \in \Mcal_{G} \cap \Kcal} \widehat{g}_{n,\kappa'}(y)
    &\geq \inf_{y \in \Mcal_{G} \cap \Kcal} \bar{g}(y) - \Gamma_{n,\kappa'} \\
    &\geq c_{A}^d d_{A}  a \left(\frac{1}{h}\right)^{D-d} - \Gamma_{n,\kappa'} \\
    &\geq \frac{c_{A}^d d_{A}  a}{2} \left(\frac{1}{h}\right)^{D-d} 
\end{align*}
as soon as $\Gamma_{n,\kappa'}\leq \frac{c_{A}^d d_{A} a}{2}$, and this lower bound is strictly larger than $\lambda_{n,\kappa}$ for any $d$.
This implies that on the event where $\Gamma_{n,\kappa'}\leq \frac{c_{A}^d d_{A}  a}{2}$, $\Mcal_{G} \cap \Kcal \subset \widehat{\Mcal}_{\kappa'} \cap \Kcal$. Next,
\begin{equation*}
\sup_{y \in \Kcal, d(y,\Mcal_{G}) \geq h\left[\frac{D}{\beta_{A}} \log\left(\frac{1}{h}\right)\right]^{\frac{A+1}{A}}} \widehat{g}_{n,\kappa'}(y)
    \leq
    \sup_{y \in \Kcal, d(y,\Mcal_{G}) \geq h\left[\frac{D}{\beta_{A}} \log\left(\frac{1}{h}\right)\right]^{\frac{A+1}{A}}}
    \bar{g}(y) + \Gamma_{n,\kappa'}.
\end{equation*}
Choosing $C_{1} = \frac{c_{A}^d d_{A} a}{16}$ and applying Lemma~\ref{lemma:sup} we get that, on the event where $\Gamma_{n,\kappa'} \leq \frac{c_{A}^d d_{A} a}{16}$,
\begin{equation*}
\sup_{y \in \Kcal, d(y,\Mcal_{G}) \geq h\left[\frac{D}{\beta_{A}} \log\left(\frac{1}{h}\right)\right]^{\frac{A+1}{A}}} \widehat{g}(y)
     \leq 2 C_1 
\end{equation*}
for $n$ large enough, and this upper bound is strictly less than $\lambda_{n,\kappa'}$ for any $d$.
This implies that
\begin{equation*}
\left\{ y : y \in \Kcal, d(y,\Mcal_{G}) >  h\left[\frac{D}{\beta_{A}} \log\left(\frac{1}{h}\right)\right]^{\frac{A+1}{A}} \right\} \cap \widehat{\Mcal}_{\kappa'} = \varnothing.
\end{equation*}
We may now take $h$ as in the statement of the Theorem.
As a result, we have proved that:
for all $\kappa_{0}\in (1/2,1]$, $S>0$, $a>0$ $d\leq D$, $\nu\in (0,\nu_{\text{est}}]$, $c(\nu)>0$ and $E>0$, there exist $c'>0$ and $n_{0}$ such that for all $n\geq n_{0}$, for all $\kappa\in[\kappa_{0},1]$, $G\in St_{\Kcal}(a,d,r_0)\cap \Lcal(\kappa,S,{\cal H})$ and $\Qbb\in \Qcal^{(D)} (\nu,c({\nu}),E)$, with ${(G * \Qbb)^{\otimes n}}$-probability at least $1-\frac{2}{n}$,
\begin{equation}
    \label{probUnif}
 \sup_{\kappa' \in  [\kappa_{0},\kappa]} 
\frac{\log(n)^{\kappa'}}{\log( \log(n))^{\kappa' + \frac{A+1}{A}}}
H_{\Kcal}(\Mcal_{G},\widehat{\Mcal}_{\kappa'}) \leq c' .
\end{equation}
Using the fact that $H_{\Kcal}(\Mcal_{G},\widehat{\Mcal}_{\kappa})$ is uniformly upper bounded, the theorem follows.

\subsection{Proof of Theorem~\ref{theo:adapt}}
\label{proof:theo:adapt}

Fix $\kappa_{0}\in (1/2,1]$, $S>0$, $a>0$ $d\leq D$, $\nu\in (0,\nu_{\text{est}}]$, $c(\nu)>0$ $E>0$. Using the end of the proof of Theorem~\ref{theorem:rateH}, there exist $n_{0}$ and $c'$ such that for all $\kappa\in[\kappa_{0},1]$, all $G\in St_{\Kcal}(a,d,r_0)\cap \Lcal(\kappa,S,{\cal H})$ and all $\Qbb\in \Qcal^{(D)} (\nu,c({\nu}),E)$, with ${(G * \Qbb)^{\otimes n}}$-probability at least $1-\frac{2}{n}$, \eqref{probUnif} holds. Let us now choose $c_{\sigma}=c'$ and consider the event where~\eqref{probUnif} holds.
By the triangular inequality, for any $\kappa \in [\kappa_0,1]$,
\begin{align*}
H_{\Kcal}(\Mcal_{G},\widehat{\Mcal}_{\widehat{\kappa}_{n}})
    &\leq H_{\Kcal}(\Mcal_{G},\widehat{\Mcal}_{\kappa})+H_{\Kcal}(\widehat{\Mcal}_{\kappa},\widehat{\Mcal}_{\widehat{\kappa}_{n}}) \\
    &\leq \sigma_{n}(\kappa)+H_{\Kcal}(\widehat{\Mcal}_{\kappa},\widehat{\Mcal}_{\widehat{\kappa}_{n}}).
\end{align*}
Now, using the definition of $B_{n}(\cdot)$, if $\kappa \leq \widehat{\kappa}_{n}$, then
\begin{equation*}
H_{\Kcal}(\widehat{\Mcal}_{\kappa},\widehat{\Mcal}_{\widehat{\kappa}_{n}})\leq B_{n}(\widehat{\kappa}_{n})+\sigma_{n}(\kappa)
\end{equation*}
while if $\kappa \geq \widehat{\kappa}_n$, then
\begin{equation*}
H_{\Kcal}(\widehat{\Mcal}_{\kappa},\widehat{\Mcal}_{\widehat{\kappa}_{n}})\leq B_{n}(\kappa)+\sigma_{n}(\widehat{\kappa}_{n})
\end{equation*}
so that in all cases,
\begin{align*}
H_{\Kcal}(\widehat{\Mcal}_{\kappa},\widehat{\Mcal}_{\widehat{\kappa}_{n}})
    &\leq B_{n}(\widehat{\kappa}_{n})+\sigma_{n}(\kappa)
        + B_{n}(\kappa)+\sigma_{n}(\widehat{\kappa}_{n}) \\
    &\leq 2 B_{n}(\kappa)+2 \sigma_{n}(\kappa)
\end{align*}
using the definition of $\widehat{\kappa}_{n}$, and therefore
\begin{equation*}
H_{\Kcal}(\Mcal_{G},\widehat{\Mcal}_{\widehat{\kappa}_{n}})
\leq 2 B_{n}(\kappa)+3 \sigma_{n}(\kappa).
\end{equation*}
By the triangular inequality and the definition of $B_{n}(\cdot)$,
\begin{align*}
B_{n}(\kappa)
    &\leq 0 \vee \sup_{\kappa' \in [\kappa_{0},\kappa]}\left\{H_{\Kcal}(\widehat{\Mcal}_{\kappa},{\Mcal}_{G})+
H_{\Kcal}({\Mcal}_{G},\widehat{\Mcal}_{\kappa'})-\sigma_{n}(\kappa')\right\} \\
    &\leq H_{\Kcal}(\widehat{\Mcal}_{\kappa},{\Mcal}_{G})
        + 0 \vee \sup_{\kappa' \in [\kappa_{0},\kappa]}\left\{H_{\Kcal}({\Mcal}_{G},\widehat{\Mcal}_{\kappa'})-\sigma_{n}(\kappa')\right\} \\
    &\leq \sigma_{n}(\kappa).
\end{align*}
Thus, for all $\kappa\in[\kappa_{0},1]$, all $G\in St_{\Kcal}(a,d,r_0)\cap \Lcal(\kappa,S,{\cal H})$ and all $\Qbb \in \Qcal^{(D)} (\nu,c({\nu}),E)$, with ${(G * \Qbb)^{\otimes n}}$-probability  at least $1-\frac{2}{n}$,
\begin{equation*}
H_{\Kcal}(\Mcal_{G},\widehat{\Mcal}_{\widehat{\kappa}_{n}}) \leq 5 \sigma_{n}(\kappa),
\end{equation*}
and using the fact that $H_{\Kcal}(\Mcal_{G},\widehat{\Mcal}_{\widehat{\kappa}}) \leq \sup_{x,x' \in \Kcal} d(x,x')$ on the event of probability at most $2/n$ where this does not hold, Theorem~\ref{theo:adapt} follows.

\subsection{Proof of Lemma~\ref{lemma:fourierbound}}
\label{proof:fourierbound}

\paragraph*{Case $\kappa \neq 1$}

This case is based on~\cite{MR1574180}. In the following, we will note constants that can change with upper case $A$, $B$ and $C$.
In~\cite{MR1574180}, Theorem 2, the author defines for any positive constants $\mu>0$, $q>1$ and $a>0$ a function $\zeta_{q,\mu,a}$, such that for $x \in \Rbb$,
\begin{equation*}
    \zeta_{q,\mu,a}(x) = -i \int_{\mathcal{C}} z^{\mu} \exp(z^q - qax^2z) dz,
\end{equation*}
where $\mathcal{C}$ is a curve in the complex plane so that the maximum of $ |z^{\mu} \exp(z^q - qax^2z)|$ for $z \in \mathcal{C}$ is attained on the positive real line. The author shows that $\zeta_{q,\mu,a}$ and $\zeta_{q,\mu,a}^2$ are integrable functions.

The author uses the saddle-point integration method to show that there exist $A>0$ and $B>0$ which depend on $q$, $\mu$ and $a$ such that

\begin{equation}
\label{eq:bounderfourierzeta}
    |\Fcal[\zeta_{q,\mu,a}](t)| \leq A \exp(-B x^{\frac{2q}{q+1}}).
\end{equation}
Finally, for $\kappa \in (1/2,1)$, fix $\mu>0$, $a>0$, and define 

\begin{equation*}
f_{\kappa} = c_{f_{\kappa}} \text{Re}[\zeta_{\frac{1}{2\kappa-1},\mu,a}]^2 * u_1,
\end{equation*}
where $u_1 : x \in \Rbb \mapsto \exp(-\frac{1}{1-4x^2})1|_{(-1/2,1/2)}(x)$ and $c_{f_{\kappa}}$ is a constant that ensures that $f_{\kappa}$ is a density.

Let us first prove that there exist $A>0$ and $B>0$ positive constants such that $|\Fcal[\text{Re}[\zeta_{q,\mu,a}]^2](t)| \leq A \exp(-B|t|^{1/\kappa})$.
\begin{align}
\nonumber
|\Fcal[\text{Re}[\zeta_{\frac{1}{2\kappa-1},\mu,a}]^2](t)|
    &= |\Fcal[\text{Re}[\zeta_{\frac{1}{2\kappa-1},\mu,a}]] \ast \Fcal[\text{Re}[\zeta_{\frac{1}{2\kappa-1},\mu,a}]] (t)| \\
\nonumber
    &\leq A \int_{\Rbb} \exp(-B|x-y|^{1/\kappa} - B |y|^{1/\kappa}) dy \\
\nonumber
    &=  \int_{|y-x| \geq |x|/2} \exp(-B|x-y|^{1/\kappa} - B |y|^{1/\kappa}) dy \\
 \nonumber
    &\quad + \int_{|y-x| < |x|/2} \exp(-B|x-y|^{1/\kappa} - B |y|^{1/\kappa}) dy \\
\label{eq:Recarre}
    &\leq A \exp(-B|x|^{1/\kappa}).
\end{align}
Finally, for all $t \in \Rbb$, using that $|\Fcal[u_1](t)| \leq 1$,

\begin{align*}
|\Fcal[f_{\kappa}](t)|
    &= |\Fcal[\text{Re}[\zeta_{\frac{1}{2\kappa-1},\mu,a}]^2](t)| \ |\Fcal[u_1](t)| \\
    &\leq c_{f_{\kappa}} A \exp(-B |x|^{\frac{1}{\kappa}}).
\end{align*}
For $x \in \Rbb$, $\Fcal[f_{\kappa}(x)]' = \Fcal[x \mapsto x f_{\kappa}(x)]$ and 
\begin{equation*}
xf_{\kappa}(x) = c_{f_{\kappa}} v * \text{Re}[\zeta_{\frac{1}{2\kappa-1},\mu,a}]^2(x) + c_{f_{\kappa}} u_1 * \tilde{\zeta}(x),
\end{equation*}
where $v : x \in \Rbb \mapsto x u_1(x)$ and $\tilde{\zeta} : x \in \Rbb \mapsto x \text{Re}[\zeta_{\frac{1}{2\kappa-1},\mu,a}]^2(x)$.

\medskip
Following the same proof as Theorem 2 of~\cite{MR1574180}, there exists $A > 0$ and $B > 0$ such that for all $t \in \Rbb$, $|\Fcal[x \mapsto x \text{Re}[\zeta_{\frac{1}{2\kappa-1},\mu,a}](t)]| \leq A \exp(- B |t|^{1/\kappa})$, so that, following the proof of~\eqref{eq:Recarre}, $|\Fcal[\tilde{\zeta}]|(t) \leq A \exp(-B |t|^{1/\kappa})$.
Hence, there exists $A > 0$ and $B > 0$ such that $|\Fcal[f_{\kappa}]'(t)| \leq A \exp(-B |t|^{1/\kappa})$.

Finally, note that $f_{\kappa}$ is continuous as a convolution of an integrable function with a smooth function, and that for all $x \in \Rbb$, $f_{\kappa}(x) > 0$ since $\text{Re}[\zeta_{\frac{1}{2\kappa-1},\mu,a}]$ and $u$ are not the null function almost everywhere.

\paragraph*{Case $\kappa = 1$}

Let $\delta \in (0,1)$ and define $f_1 : x \in \Rbb \mapsto c_{f_{1}} (u_{\frac{1}{1-\delta}} \ast u_{\frac{1}{1-\delta}} )(x)$, where $c_{f_{1}}$ is a constant that ensures that $f_1$ is a probability density.

There exist $A> 0$ and $B>0$ such that $\Fcal[f_1](x) \leq A \exp(-B|x|^{\delta})$, see Lemma in~\cite{BumpFunction}. Moreover, $\Fcal[f_1]'(x) = 2 c_{f_{1}} \Fcal[ u_{\frac{1}{1-\delta}}](x) \Fcal[ u_{\frac{1}{1-\delta}}]'(x) \leq A \|x \mapsto x u_{\frac{1}{1-\delta}}(x) \|_{1} \exp(-B |x|^{\delta})$.

Finally, note that $f_{1}$ is continuous and does not vanish on its support.

\subsection{Proof of Lemma~\ref{lemma:A1/kappa}}
\label{proof:lemma:A1/kappa}

First, by Lemma~\ref{lemma:fourierbound}, for any $\kappa \in (1/2,1]$, $U(\kappa)$ satisfies $A(1/\kappa)$.

Let $i \in \{0,1\}$. For any $\lambda = (\lambda_1, \dots, \lambda_D) \in \Rbb^D$,
\begin{align}
\nonumber
\Ebb[\exp(\lambda^{\top} X_i(\kappa))]
    &= \Ebb\left[\exp\left( \left(\lambda_1+ \frac{1}{2} \lambda_2\right) U(\kappa) + (-1)^i \gamma \lambda_{2} \frac{1}{2} \cos\left(\frac{U(\kappa)}{\gamma}\right)\right)\right] \\
\label{eq:fkappalaplace}
    &\leq e^{ \gamma \frac{1}{2} | \lambda_{2}| }  \Ebb\left[\exp\left( \left(\lambda_1 + \frac{1}{2} \lambda_2\right) U(\kappa)\right)\right].
\end{align}
Since $U(\kappa)$ satisfies $A(1/\kappa)$, there exist positive constants $A>0$ and $B>0$ such that for all $\lambda = (\lambda_1, \ldots, \lambda_D) \in \Rbb^D$, $\Ebb[\exp(\lambda^{\top} X_i(\kappa))]\leq A\exp(B |\lambda|^{\frac{1}{\kappa}})$. Applying this in \eqref{eq:fkappalaplace},
\begin{align*}
\Ebb[\exp(\lambda^{\top} X_i(\kappa))]
    &\leq A\exp\left(\gamma \frac{1}{2} | \lambda_{2}|+B \left|\left(\lambda_1 + \frac{1}{2} \lambda_2\right)\right|^{\frac{1}{\kappa}}\right)\\
    &\leq A' \exp(B' |\lambda|^{\frac{1}{\kappa}})
\end{align*}
for some other constants $A'$ and $B'$ since $1\leq \kappa$, so that
$X_{i}(\kappa)$ satisfies $A(1/\kappa)$.

\subsection{Proof of Lemma~\ref{lemma:standard}}
\label{proof:lemma:standard}

The proof is done in five steps.
\begin{enumerate}
    \item \label{pt_adstandard_1} We show that $\gamma g_{\gamma}$ is $1$-lipschitz.
    \item \label{pt_adstandard_2} For $i \in \{0,1\}$ and $\kappa \in (\frac{1}{2},1]$, we compute the density $p_i$ of $T_i(\kappa)$ with respect to the $1$-dimensional Hausdorff measure $\mu_{H}$ and we show that for any compact set $\Kcal$, there exists $b(\kappa,\Kcal)>0$ such that, for all $u \in M_i(\gamma) \cap \Kcal$, $|p_i(u)| \geq b(\kappa, \Kcal)$.
    \item \label{pt_adstandard_3} We show that for $i \in \{0,1\}$, $\mu_{H}( \cdot \cap M_i(\gamma))$ is in $St_{\Kcal}(2,d,r_0)$.
    \item \label{pt_adstandard_4} We deduce that for $i \in \{0,1\}$ and $d \geq 1$, $T_i$ is in $St_{\Kcal}(2 b(\kappa, \Kcal), d,r_0)$.
    \item \label{pt_adstandard_5} Finally, we show that for $i \in \{0,1\}$, $d \geq 1$ and $a$ small enough, $G_i(\kappa) \in St_{\Kcal}(a, d, r_0)$.
\end{enumerate}

\paragraph*{Proof of~\ref{pt_adstandard_1}}
For all $x\in\Rbb$, $| \gamma \tilde{g}_{\gamma}'(x) | = | \sin(\frac{x}{\gamma}) | \leq 1$, which implies that $\gamma g_{\gamma}$ is $1$-Lipschitz.

\paragraph*{Proof of~\ref{pt_adstandard_2}} 
Let us first compute the density $p_i$ of $T_i(\kappa)$ with respect to $\mu_{H}$.
For $i \in \{0,1\}$, denote $\zeta_i : x \in \Rbb \mapsto (x, (-1)^i \gamma g_{\gamma}(x))$.
Let $\mathcal{B}$ be an open subset of $\Rbb^D$. For any $\kappa \in (\frac{1}{2},1]$,
\begin{equation*}
    T_i(\kappa)(\mathcal{B}) = \Pbb[ \zeta_i(U(\kappa)) \in \mathcal{B}] = \Pbb[U(\kappa) \in \zeta_i^{-1}(\mathcal{B})] = \int_{\zeta_i^{-1}(\mathcal{B})} f_{\kappa}(u) du.
\end{equation*}
Let $\Jac \zeta_i : u \in \Rbb \mapsto \sqrt{1 + \gamma^2 \tilde{g}_{\gamma}(u)^2}$ be the Jacobian of $\zeta_i$. By the Area Formula (see equation (2.47) in~\cite{MR1857292}),
\begin{equation*}
    s_i(\kappa)(\mathcal{B}) = \int_{\zeta_i^{-1}(\mathcal{B})} \frac{ f_{\kappa}(u)}{\Jac \zeta_i(u)} \Jac \zeta_i(u) du = \int_{\mathcal{B} \cap M_i(\gamma)} \frac{f_{\kappa}(\pi^{(1)}(u))}{\Jac \zeta_i(\pi^{(1)}(u))} d\mu_{H}(u).
\end{equation*}
We then have that for all $x \in \Rbb^D$, 

\begin{equation*}
    p_i(x) = \frac{f_{\kappa}(\pi^{(1)}(x))}{\Jac \zeta_i(\pi^{(1)}(x))} 1|_{M_i(\gamma)}(x).
\end{equation*}
Since  $f_{\kappa}$ is continuous and does not vanish on its support, for any compact set $\Kcal$, $M_i(\gamma) \cap \Kcal$ is a compact subset of the support of $f_{\kappa}$. Thus, since  $f_{\kappa}$ is continuous and does not vanish on its support, for any compact set $\Kcal$, there exists $c(\kappa, \Kcal)>0$ such that for all $u \in M_i(\gamma) \cap \Kcal$, $f_{\kappa}(u) \geq c(\kappa, \Kcal)$.
Moreover, for $i \in \{0, 1 \}$, $\Jac \zeta_i(u) \leq \sqrt{2}$. Therefore, for all $x \in M_i(\gamma) \cap \Kcal$, $|p_i(x)| \geq \frac{c(\kappa, \Kcal)}{\sqrt{2}}$.

\paragraph*{Proof of~\ref{pt_adstandard_3}} 
Recall that 
the $1$-dimensional Hausdorff measure $\mu_{H}$ is defined as the limit $\lim_{\eta \rightarrow 0} \mu_{H}^{\eta}$, where for any set $Z$
\begin{equation*}
\mu_{H}^{\eta}(Z) = \inf \left\{\sum_{i \in \Nbb} \Diam(A_i) : X \subset \bigcup_i A_i \text{ and } \forall i, \Diam(A_i) \leq \eta \right\}.
\end{equation*}
For any $z \in M_i(\gamma)$, there exists $x_0 \in \Rbb$ such that  $z=(x_0, (-1)^i \gamma g_{\gamma}(x_0))$ and, for any $r>0$,
\begin{equation*}
B(z,r) \cap M_i(\gamma) \supset \{(x,(-1)^i \gamma g_{\gamma}(x)), x \in B(x_0, r)\} 
\end{equation*}
since $|x-x_0| \leq r$ implies $\|\gamma g_{\gamma}(x)- \gamma g_{\gamma}(x_0)\|_{\infty} \leq r$.

Let $(A_i)_{i\in\Nbb}$ be a covering of $\{(x,(-1)^i \gamma g_{\gamma}(x)), x \in B(x_0, r)\}$, and $B_i = \pi^{(1)}(A_i)$, then $B_i$ is a covering of $B(x_0, r)$. For all $\eta > 0$,
\begin{equation*}
\mu_{H}^{\eta}(\{(x,(-1)^i \gamma g_{\gamma}(x)), x \in B(x_0, r)\})
    \geq \mu_{H}^{\eta}(B(x_0, r)),
\end{equation*}
thus $\mu_{H}(B(z,r) \cap M_i(\gamma)) \geq \mu_{H}(B(x_0, r)) = 2r$. If $r_0 \leq 1$, then for any $r\leq r_0$, 
\begin{equation*}
    \mu_{H}(B(z,r) \cap M_i(\gamma)) \geq 2r^d,
\end{equation*}
which proves~\ref{pt_adstandard_3}.

\paragraph*{Proof of~\ref{pt_adstandard_4}}
Let $x_i \in M_i(\gamma) \cap \Kcal$ and $r_0 < 1$. Then for all $r \leq r_0$, 
\begin{equation*}
T_i(B(x_i,r) \cap M_i(\gamma))
    = \int_{B(x_i,r) \cap M_i(\gamma)} p_i(u) d\mu_{H}(u)
    \geq b(\kappa, \Kcal) \mu_{H}(B(x_i,r) \cap M_i(\gamma))
    \geq 2 b(\kappa, \Kcal) r^d.
\end{equation*}

\paragraph*{Proof of~\ref{pt_adstandard_5}}
For $i \in \{0,1\}$, let $x_i \in J M_i(\gamma)\cap \Kcal$, $r_0<1$, and take $\tilde{\Kcal}$ such that $J^{-1} \Kcal \subset \tilde{\Kcal}$.
For all $r \leq r_0$,
\begin{align*}
G_i(\kappa)(B(x_i,r)) = \Pbb[J S_i(\kappa) \in B(x_i,r)]
    &\geq \Pbb\left[S_i(\kappa) \in B\left(J^{-1} x_i, \frac{r}{\|J\|_\text{op}}\right)\right] \\
    &\geq \frac{2 b(\kappa, J^{-1} \Kcal)}{\|J\|_\text{op}} r \\
    &\geq \frac{2 b(\kappa, \tilde{\Kcal})}{\|J\|_\text{op}} r,
\end{align*}
so that for some $a_0$ and all $a\leq a_0$, $G_i(\kappa)(B(x_i,r)) \geq a r^d$.

\subsection{Proof of Lemma~\ref{lemma:GiA}}
\label{proof:lemma:GiA}

Let us write $m_{i,\gamma}(x) = ( x + (-1)^i \frac{\gamma}{2} \cos(\frac{x}{\gamma}), 0 , \ldots, 0)$, so that $X_i(\kappa) = (U(\kappa), m_{i,\gamma}(U(\kappa))$.
For $i \in \{0,1\}$, let $w_{i,\kappa, \gamma}$ be the density of the first coordinate of $m_{i,\gamma}(U(\kappa))$, then
\begin{equation*}
    M_{\kappa} = \sup_{x \in \Rbb, \gamma \in [0,1], i\in \{0,1\}} \{ w_{i,\kappa, \gamma}(x) \vee f_{\kappa}(x) \} 
\end{equation*}
is an upper bound of the density of $X_{i}(\kappa)^{(1)}$ and of the first coordinate of $X_{i}(\kappa)^{(2)}$ with respect to the Lebesgue measure. Let us show that $M_{\kappa}$ is finite.
First, note that $m_{i,\gamma}$ is one-to-one from $\Rbb$ to $\Rbb \times \{0\}^{D-2}$ and $m^{-1}_{i,\gamma}$ is Lipschitz with Lipschitz constant upper bounded by $1/2$. One can easily check that for all $x \in \Rbb$,
\begin{equation*}
    w_{i,\kappa,\gamma}(x) = f_{\kappa}((m_{i,\gamma})_{1}(x,0, \ldots, 0)) \frac{1}{(m_{i,\gamma})_{1}'(m^{-1}_{i,\gamma}(x,0,\ldots,0))},
\end{equation*}
where $(m_{i,\gamma})_1(x)$ is the first coordinate of $m_{i,\gamma}(x)$. Since $(m_{i,\gamma})_{1}'$ is lower bounded by $1/2$, $M_{\kappa} \leq \sup_{x \in \Rbb}  f_{\kappa}(x)$, which is finite.

For any $\Delta>0$, define the sets:
\begin{eqnarray*}
    A_{\Delta}^{(1)} = [-\Delta, \Delta] \ &\text{and}& \ \ B_{\Delta}^{(2)} = \bar{B}\left(0,(\frac{1}{2}+2) \Delta\right) \cap (\Rbb \times \{0\}^{D-2}),\\
    A_{\Delta}^{(2)} = [-\Delta, \Delta] \times \{0\}^{D-2} \ &\text{and}& \ \ B_{\Delta}^{(1)} = \bar{B}(0,\Delta) \cap \Rbb.
\end{eqnarray*}
Define $c_{\Delta, \kappa} = \Pbb[U(\kappa) \in A^{(1)}_{\Delta}] \wedge \inf_{\gamma \in [0,1], i \in \{0,1\}} \Pbb[m_{i,\gamma}(U(\kappa)) \in A^{(2)}_{\Delta}]$, and let us prove that $c_{\Delta,\kappa} > 0$.

First, $\Pbb[U(\kappa) \in A_{\Delta}^{(1)}] > 0$ since the density of $U(\kappa)$ is positive everywhere on its support. Then, for $i\in\{0,1\}$,
\begin{align*}
\Pbb[m_{i,\gamma}(U(\kappa)) \in A_{\Delta}^{(2)}]
    &= \Pbb\left(U(\kappa) + (-1)^i \frac{1}{2} \gamma \cos\left(\frac{U(\kappa)}{\gamma}\right) \in [-\Delta, \Delta]\right) \\
    & \geq \Pbb\left(U(\kappa) \in [-\Delta/2,\Delta/2], (-1)^i \frac{1}{2} \gamma \cos\left(\frac{U(\kappa)}{\gamma}\right) \in [-\Delta/2, \Delta/2]\right) \\
    & \geq \Pbb\left(U(\kappa) \in [-\Delta/2,\Delta/2], \cos\left(\frac{U(\kappa)}{\gamma}\right) \in [-\Delta/\gamma,  \Delta/\gamma]\right) \\
    &\geq \Pbb\left(U(\kappa) \in \left[-\frac{\Delta}{2},\frac{\Delta}{2}\right] \cap \left[\arccos(\Delta), \pi - \arccos(\Delta)\right]\right),
\end{align*}
which is positive.


For any $\Delta > 0$ define $B^{(1)}_{\Delta, i, \gamma} = m_{i,\gamma}^{-1}(A^{(2)}_{\Delta})$. Then 
\begin{align*}
\Diam(B^{(1)}_{\Delta, i, \gamma})
    &= \sup_{x, y \in B^{(1)}_{\Delta, i, \gamma}} |x -  y| \\
    &= \sup_{x,y \in A^{(2)}_{\Delta}} |m^{-1}_{i,\gamma}(x) - m^{-1}_{i,\gamma}(y)| \\
    & \leq \frac{1}{2} \sup_{x,y \in A^{(2)}_{\Delta}} \|x-y\|
        \leq \Delta.
\end{align*}
Thus, $B^{(1)}_{\Delta, i, \gamma} \subset B^{(1)}_{\Delta}$, and 
\begin{equation*}
\Pbb[(X_i(\kappa))^{(1)} \in B^{(1)}_{\Delta,i} | (X_i(\kappa))^{(2)} \in A^{(2)}_{\Delta}]  \geq \Pbb[(X_i(\kappa))^{(1)} \in B^{(1)}_{\Delta,i,\gamma} | (X_i(\kappa))^{(2)} \in A^{(2)}_{\Delta}] = 1.
\end{equation*}
Similarly, define $B^{(2)}_{\Delta, i, \gamma} = m_{i,\gamma}(A^{(1)}_{\Delta}) =  \{ ( x + (-1)^i \gamma \frac{1}{2} \cos(\frac{x}{\gamma}),0, \ldots, 0) \ , \ x \in [-\Delta, \Delta] \}$, then
\begin{align*}
\Diam(B^{(2)}_{\Delta, i, \gamma})
    &= \sup_{x, y \in A^{(1)}_{\Delta}} \left|x + (-1)^i \gamma \frac{1}{2} \cos\left(\frac{x}{\gamma}\right) -  y - (-1)^i \gamma \frac{1}{2} \cos\left(\frac{y}{\gamma}\right)\right| \\
    &\leq 2 \Delta + \gamma \frac{1}{2} \left| \cos\left(\frac{x}{\gamma}\right) - cos\left(\frac{y}{\gamma}\right)\right| \\
    &\leq \left(\frac{1}{2} + 2\right) \Delta.
\end{align*}
Thus, $B^{(2)}_{\Delta, i, \gamma} \subset B^{(2)}_{\Delta,i}$, and 
\begin{equation*}
\Pbb[(X_i(\kappa))^{(2)} \in B^{(2)}_{\Delta,i} | (X_i(\kappa))^{(1)} \in A^{(1)}_{\Delta}]
    \geq \Pbb[(X_i(\kappa))^{(2)} \in B^{(2)}_{\Delta,i,\gamma} | (X_i(\kappa))^{(1)} \in A^{(1)}_{\Delta}]
    = 1.
\end{equation*}
Thus, the distribution of $X_i(\kappa)$ satisfies (H1) and (H2). Lemma \ref{lemma:GiA} follows from Lemma \ref{lemma:A1/kappa} and Theorem \ref{thm:id}.

\subsection{Proof of Theorem~\ref{thm:lower}}
\label{proof:lower}

In the following, we will write $A$, $B$, $C$ (with upper case letters) positive constants that can change from line to line. As in~\cite{LCGL2021} and~\cite{GPVW12}, we use the upper bound:
\begin{equation*}
\|(G_0(\kappa) \ast Q)^{\otimes n} - (G_1(\kappa) \ast Q)^{\otimes n}\|_{TV} \leq 1-\left(1-\|(G_0(\kappa) \ast Q)-(G_1(\kappa) \ast Q)\|_{TV} \right)^{n},
\end{equation*}
where $\|\cdot - \cdot\|_{TV}$ denotes the total variation distance.
Using Le Cam's two-points method, the minimax rate will be lower bounded by $H(J M_0(\gamma),J M_1(\gamma))$, that is $\gamma$, (see Lemma~\ref{lemma:lowerboundH}) provided that there exists a constant $C>0$ such that $\|(G_0(\kappa) \ast Q)^{\otimes n} - (G_1(\kappa) \ast Q)^{\otimes n}\|_{TV} \leq C < 1$, so that we only need to find $C>0$ such that
\begin{equation*}
\int_{\Rbb^D} \left| d(G_{0}(\kappa)\ast Q)(x)-d(G_1(\kappa) \ast Q)(x)\right| \leq \frac{C}{n}.
\end{equation*}
Since $Q$ has a density $q$ over $\Rbb^D$, $G_{0}(\kappa) \ast Q$ and $G_1(\kappa) \ast Q$ also have a density over $\Rbb^D$.
We first prove that for $i \in \{0,1\}$,
\begin{equation}
\label{eq:1}
\int_{\Rbb^D} \prod_{j=1}^{D}x_{j}^{2} \left| \frac{d(G_{i}(\kappa) \ast Q)}{dx}(x)\right|^{2} dx  < +\infty.
\end{equation}
Indeed,
\begin{equation*}
\int_{\Rbb^D} \prod_{j=1}^{D}x_{j}^{2} \left| \frac{d(G_{i}(\kappa) \ast Q)}{dx}(x)\right|^{2} dx
    \leq \left \| \frac{d(G_{i}(\kappa) \ast Q)}{dx} \right \|_{\infty} \int_{\Rbb^D} \prod_{j=1}^{D}x_{j}^{2} d(G_{i}(\kappa) \ast Q)(x).
\end{equation*}
First, $\left\| \frac{d(G_{i}(\kappa) \ast Q)}{dx} \right\|_{\infty} \leq \| q \|^{D}_{\infty} < \infty$. Moreover, for $k \in \{1, \ldots, D\}$, writing $X_i(\kappa)^{[k]}$ and $\varepsilon^{[k]}$ for the $k$-th coordinate of $X_i(\kappa)$ and $\varepsilon$,
\begin{align}
\int_{\Rbb^D} \prod_{j=1}^{D}x_{j}^{2} \left| d(G_{i}(\kappa) \ast Q)(x)\right|
    &= \Ebb[\prod_{k=1}^{D} (X_i(\kappa)^{[k]}+\varepsilon^{[k])})^2]
\nonumber \\
\label{eq:Efinite}
    &= \Ebb [ (X_i(\kappa)^{[1]}+\varepsilon^{[1]})^2 (X_i(\kappa)^{[2]}+\varepsilon^{[2]})^2] \prod_{k=3}^{D} \Ebb[(\varepsilon^{[k]})^2].
\end{align}
We have that $(X_i(\kappa)^{[2]}+\varepsilon^{[2]})^2 \leq a^2 (X_i(\kappa)^{[1]})^2 + 2 \gamma X_i(\kappa)^{[1]} + 2 X_i(\kappa)^{[1]} \varepsilon^{[2]} + (1 + \gamma) (\varepsilon^{[2]})^2 + \gamma^2$, using~\eqref{eq:Efinite} and the fact that $\varepsilon^{[2]}$ is independent of all other variables and that, for $k \in \{ 1, 2 \}$, $X_i^{[1]}$ is independent of $\varepsilon^{[k]}$, we finally get that $\int \prod_{j=1}^{D}x_{j}^{2} \left| d(G_{i}(\kappa) \ast Q)(x)\right|$ is upper bounded by product and sum of expectation of $((\varepsilon^{[j]})^2)_{j \in \{1, \ldots, D\}}$, $(X_i(\kappa)^{[1]})^2$, $(X_i(\kappa)^{[1]})^3$ and $(X_i(\kappa)^{[1]})^4$ which are all finite thanks to Lemma~\ref{lemma:fourierbound}.

By the Cauchy-Schwarz inequality,
\begin{multline}
\label{eq:2}
\int_{\Rbb^D} \left| d(G_{0}(\kappa) \ast Q)(x)-d(G_1(\kappa) \ast Q)(x)\right| \\
    \leq \pi^{D/2}\left(\int \prod_{j=1}^{D}(1+x_{j}^{2}) \left| \frac{d((G_{0}(\kappa)-G_{1}(\kappa)) \ast Q)}{dx}(x)\right|^{2} dx \right)^{1/2}.
\end{multline}
By Parseval's identity, for all $\eta \in \{0,1\}^D$,
\begin{align*}
\int_{\Rbb^{D}}\prod_{j=1}^D x_j^{2\eta_j} \left| \frac{d((G_{0}(\kappa)-G_{1}(\kappa)) \ast Q)}{dx}(x)\right|^{2} dx
	&= \int_{\Rbb^{D}} \left| \left( \prod_{j=1}^D \partial^{\eta_j}_{t_j} \right) (\Fcal[G_0(\kappa)]-\Fcal[G_1(\kappa)])(t) \Fcal[Q](t)\right|^2 dt \\
	&= \int_{[-c,c]^{D}} \left| \left( \prod_{j=1}^D \partial^{\eta_j}_{t_j} \right) (\Fcal[G_0(\kappa)]-\Fcal[G_1(\kappa)])(t) \Fcal[Q](t)\right|^2 dt,
\end{align*}
since $\Fcal[Q]$ and for $\eta \in \{0,1\}^D$, $\partial^{\eta}\Fcal[Q]$ are supported on $[-c,c]^D$. Moreover, they are bounded functions, so that there exists a constant $C$ (depending only on $D$) such that
\begin{align*}
\int_{\Rbb^D} \left| d(G_{0}(\kappa) \ast Q)(x)-d(G_1(\kappa) \ast Q)(x)\right|
    &\leq C \sum_{\eta\in\{0,1\}^{D}}\int_{[-c,c]^{D}}\left|\left(\prod_{j=1}^{D} \partial^{\eta_j}_{t_j} \right) (\Fcal[G_0(\kappa)]-\Fcal[G_1(\kappa)])(t)\right|^2 dt \\
    &= \sum_{\eta\in\{0,1\}^{D}}\int_{[-c,c]^{D}}\left|\left(\prod_{j=1}^{D} \partial^{\eta_j}_{t_j} \right) (t \mapsto \Fcal[S_0]-\Fcal[S_1])(A_a^\top t)\right|^2 dt.
\end{align*}
Using the change of variable $u = A^\top t$, and noticing that $\{ A^\top t \ ; \ t \in [-c,c]^D \} \subset [-2c,2c]^D$, there exists a constant $C>0$ depending on $D$ and $a$ such that
\begin{equation*}
    \int_{\Rbb^D} \left| d(G_{0}(\kappa) \ast Q)(x)-d(G_1(\kappa) \ast Q)(x)\right|  \leq C \sum_{\eta\in\{0,1\}^{D}}\int_{[-2c, 2c]^D}\left|\left(\prod_{j=1}^{D} \partial^{\eta_j}_{t_j} \right) (\Fcal[S_0]-\Fcal[S_1])(u)\right|^2 du.
\end{equation*}
For all $t=(t_{1},\ldots,t_{D})\in\Rbb^D$, for $i \in \{0,1\}$,  $\Fcal[T_i](t)=\Fcal[\tilde{T}_i](t_{1},t_{2})$, where $\tilde{T}_{i}$ is the distribution of the $2$ first coordinates of $S_i(\kappa)$ under $T_{i}$. There exists a constant $C>0$ such that
\begin{multline}
\label{eq:lowerbound1}
\int_{\Rbb^D} \left| d(G_{0}(\kappa) \ast Q)(x)-d(G_1(\kappa) \ast Q)(x)\right| \\
    \leq C \sum_{\eta\in\{0,1\}^{2}}\int_{[-2c, 2c]^{2}}\left|\left(\prod_{j=1}^{2} \partial^{\eta_j}_{t_j} \right) (\Fcal[\tilde{T}_0]-\Fcal[\tilde{T}_1])(t)\right|^2 dt.
\end{multline}
Following the same approach as~\cite{GPVW12}, we get that for all $t = (t_1, t_{2}) \in \Rbb^{2}$,  
\begin{align*}
(\Fcal[\tilde{T}_0]-\Fcal[\tilde{T}_1])(t)
    &= \int_{\Rbb} \{ e^{i t_{1} u + i \gamma t_{2} \tilde{g}_{\gamma}(u)} - e^{i t_{1} u - i \gamma t_{2} \tilde{g}_{\gamma}(u)} \} f_{\kappa}(u) du \\
    &= 2i \int_{\Rbb} e^{i t_{1} u} \sin(t_{2} \gamma \tilde{g}_{\gamma}(u)) f_{\kappa}(u) du \\
    &= 2i \int_{\Rbb} e^{i t_{1} u} \sum_{k=0}^{\infty} \frac{ (-1)^k t_{2}^{2k+1} \gamma^{2k+1}}{(2k+1)!} \tilde{g}_{\gamma}^{2k+1}(u) f_{\kappa}(u) du.
\end{align*}
Since $\sum_{k=0}^{\infty} \int_{\Rbb} \frac{ |t_{2}|^{2k+1} \gamma^{2k+1}}{(2k+1)!} |\tilde{g}_{\gamma}^{2k+1}(u)| f_{\kappa}(u) du $ is finite, we can switch integral and sum thanks to Fubini Theorem, so that
\begin{align*}
(\Fcal[\tilde{T}_0]-\Fcal[\tilde{T}_1])(t)
    &= 2i \sum_{k=0}^{\infty} \frac{(-1)^{k}t_{2}^{2k+1}\gamma^{2k+1}}{(2k+1)!} \int_{\Rbb} e^{i t_1 u } \tilde{g}_{\gamma}^{2k+1}(u) f_{\kappa}(u) du \\
    &= 2i \sum_{k=0}^{\infty} \frac{(-1)^{k}t_{2}^{2k+1}\gamma^{2k+1}}{(2k+1)!} m_{k}(t_{1}),
\end{align*}
with for all $u \in \Rbb$,
\begin{equation}
\label{eq:mconvol}
    m_k(u) = \Fcal[\tilde{g}^{2k+1} f_{\kappa}](u) = (\underbrace{\Fcal{[\tilde{g}]} * \Fcal{[\tilde{g}]} * \ldots * \Fcal{[\tilde{g}]}}_{ 2k+1 \ \text{times}} * \Fcal{[f_{\kappa}]})(u).
\end{equation}
Since
\begin{equation*}
    \Fcal{[x \mapsto \cos(\frac{x}{\gamma})]} = \frac{1}{2} \delta_{-\frac{1}{\gamma}} + \frac{1}{2} \delta_{\frac{1}{\gamma}},
\end{equation*}
for all $u \in \Rbb$,
\begin{align*}
(\underbrace{\Fcal{[\tilde{g}]} * \Fcal{[\tilde{g}]} * \ldots * \Fcal{[\tilde{g}]}}_{ 2k+1 \ \text{times}})(u)
    &= \underbrace{\Fcal{[\cos(\frac{\cdot}{ \gamma})]} * \ldots * \Fcal{[\cos(\frac{\cdot}{ \gamma})]}}_{ 2k+1 \ \text{times}} \\
    &= \left(\frac{1}{2}\right)^{2k+1} \sum_{j=1}^{2k+1} \binom{2k+1}{j} \delta_{a_{j}},
\end{align*}
where $a_j = (2j-2k-1)/\gamma$. By~\eqref{eq:mconvol}, 


\begin{equation*}
m_{k}(u)=\left(\frac{1}{2}\right)^{2k+1}\sum_{j=0}^{2k+1} \binom{2k+1}{j} \Fcal[f_{\kappa}](u-a_j) .
\end{equation*}
Therefore,

\begin{equation*}
\sup_{|t|\leq c} | m_{k}(t) | \leq \sup_{|t|\leq c, 0\leq j \leq 2k+1} \left|  \Fcal[f_{\kappa}]\left(t-\frac{2j-2k-1}{ \gamma } \right)\right|
\end{equation*}
and
\begin{equation*}
\sup_{|t|\leq c} | m_{k}'(t) | \leq \sup_{|t|\leq c, 0\leq j \leq 2k+1} \left|  \Fcal[f_{\kappa}]'\left(t-\frac{2j-2k-1}{ \gamma } \right)\right|.
\end{equation*}

\paragraph*{Assume first that $\kappa \in (1/2,1)$}

For $\gamma$ that satisfies $\gamma \leq \frac{1}{2c}$, by Lemma~\ref{lemma:fourierbound}, there exist two constants $A$, $B$ independent of $\gamma$ and $k$ such that
\begin{equation*}
     \sup_{|t|\leq c, 0\leq j \leq 2k+1} \left|  \Fcal[f_{\kappa}]\left(t-\frac{2j-2k-1}{ \gamma } \right)\right| \leq A \exp(- B \gamma^{-\frac{1}{\kappa}})
\end{equation*}
and
\begin{equation*}
    \sup_{|t|\leq c, 0\leq j \leq 2k+1} \left|  \Fcal[f_{\kappa}]'\left(t-\frac{2j-2k-1}{ \gamma } \right)\right| \leq A \exp(- B \gamma^{-\frac{1}{\kappa}}).
\end{equation*}
Thus,
\begin{equation}
\label{eq:supm}
    \sup_{|t|\leq c} | m_{k}(t) | \leq A  \exp(- B \gamma^{-\frac{1}{\kappa}}) ,
\end{equation}
and
\begin{equation}
\label{eq:supm'}
    \sup_{|t|\leq c} | m_{k}'(t) | \leq A \exp(- B \gamma^{-\frac{1}{\kappa}}).
\end{equation}
For all $\eta \in \{0,1\}^{2}$, and $t \in [-c,c]^2$,
\begin{multline*}
\left ( \prod_{j=1}^{2} \partial_{t_{j}}^{\eta_{j}} \right ) (\Fcal[\tilde{T}_0]-\Fcal[\tilde{T}_1])(t)
    = \prod_{j=1}^{2} \partial_{t_{j}}^{\eta_{j}} \left [ 2i \sum_{k=0}^{\infty} \frac{(-1)^{k}t_{2}^{2k+1}\gamma^{2k+1}}{(2k+1)!} m_{k}(t_{1}) \right] \\
    = 2i \eta_{2} \sum_{k=0}^{\infty} \frac{(-1)^{k}t_{2}^{2k}\gamma^{2k+1}}{(2k)!} \partial_{t_{1}}^{\eta_{1}} m_{k}(t_{1}) + 2i (1 - \eta_{2}) \sum_{k=0}^{\infty} \frac{(-1)^{k}t_{2}^{2k+1}\gamma^{2k+1}}{(2k+1)!} \partial_{t_{1}}^{\eta_{1}} m_{k}(t_{1}),
\end{multline*}
so that
\begin{align*}
&\left | \left ( \prod_{j=1}^{2} \partial_{t_{j}}^{\eta_{j}} \right ) (\Fcal[\tilde{T}_0]-\Fcal[\tilde{T}_1])(t) \right | \\
    &\leq 2 \sum_{k=0}^{\infty} \frac{|t_{2}|^{2k}\gamma^{2k+1}}{(2k)!} | \partial_{t_{1}}^{\eta_{1}} m_{k}(t_{1})| + 2 \sum_{k=0}^{\infty} \frac{|t_{2}|^{2k+1}\gamma^{2k+1}}{(2k+1)!} | \partial_{t_{1}}^{\eta_{1}} m_{k}(t_{1})|.
\end{align*}
By~\eqref{eq:supm} and~\eqref{eq:supm'}, there exists a constant $C>0$ which depends only on $D$ and $A$ such that
\begin{equation*}
   \left ( \prod_{j=1}^{2} \partial_{t_{j}}^{\eta_{j}} \right ) (\Fcal[\tilde{S}_0]-\Fcal[\tilde{S}_1])(t) \leq C \exp(-B \gamma^{-\frac{1}{\kappa}}) \sup_{|t_{2}|\leq c} \Bigg( \gamma  \cosh(| t_{2}| \gamma ) + \sinh(|t_{2}| \gamma )  \Bigg).
\end{equation*}
For $\gamma$ small enough, there exists a constant $C_1>0$ which depends only on $D$, $A>0$ and $C>0$ such that
\begin{equation*}
\left | \left ( \prod_{j=1}^{2} \partial_{t_{j}}^{\eta_{j}} \right ) (\Fcal[\tilde{T}_0]-\Fcal[\tilde{T}_1])(t) \right | \leq  C \exp(-B \gamma^{-\frac{1}{\kappa}}).
\end{equation*}
Finally, using \eqref{eq:lowerbound1}, there exist constants $C>0$ and $B>0$ which depend only on $D$ such that
\begin{equation*}
\int_{\Rbb^D} \left| d(G_{0}(\kappa) \ast Q)(x)-d(G_1(\kappa) \ast Q)(x)\right|  \leq C \exp(-B \gamma^{-\frac{1}{\kappa}}).
\end{equation*}
Taking $\gamma = c_{\gamma}(\log n)^{-\kappa}$ with $c_{\gamma} \leq B_1^{\kappa}$ shows that there exists $C>0$ such that
\begin{equation*}
\int_{\Rbb^D} \left| d(G_{0}(\kappa) \ast Q)(x)-d(G_1(\kappa) \ast Q)(x)\right|  \leq \frac{C}{n}.
\end{equation*}

\paragraph*{Let us now consider the case $\kappa = 1$}

For $\gamma$ that satisfies $\gamma \leq \frac{1}{2c}$, by Lemma~\ref{lemma:fourierbound}, for all $\delta \in (0,1)$, there exist two constants $A>0$, $B>0$ independent of $\gamma$ and $k$ such that
\begin{equation*}
     \sup_{|t|\leq c, 0\leq j \leq 2k+1} \left|  \Fcal[f_{1}]\left(t-\frac{2j-2k-1}{ \gamma } \right)\right| \leq A \exp(- B \gamma^{-\delta} ),
\end{equation*}
and
\begin{equation*}
    \sup_{|t|\leq c, 0\leq j \leq 2k+1} \left|  \Fcal[f_{1}]'\left(t-\frac{2j-2k-1}{ \gamma } \right)\right| \leq A \exp(- B \gamma^{-\delta} ).
\end{equation*}
Thus, there exist constants $A>0$ and $B>0$ independent of $\gamma$ and $k$ such that
\begin{equation}
\label{eq:supmkappa1}
    \sup_{|t|\leq c} | m_{k}(t) | \leq A \exp(- B \gamma^{-\delta}),
\end{equation}
and
\begin{equation}
\label{eq:supm'kappa1}
    \sup_{|t|\leq c} | m_{k}'(t) | \leq A \exp(- B \gamma^{-\delta}).
\end{equation}
Doing the same computation as in the case $\kappa \in (1/2,1)$ shows that for all $\eta \in \{0,1\}^{2}$ and $t \in [-c,c]^2$,
\begin{align*}
&\left | \left ( \prod_{j=1}^{2} \partial_{t_{j}}^{\eta_{j}} \right ) (\Fcal[\tilde{T}_0]-\Fcal[\tilde{T}_1])(t) \right | \\
    &\leq 2 \sum_{k=0}^{\infty} \frac{|t_{2}|^{2k}\gamma^{2k+1}}{(2k)!} | \partial_{t_{1}}^{\eta_{1}} m_{k}(t_{1})| + 2 \sum_{k=0}^{\infty} \frac{|t_{2}|^{2k+1}\gamma^{2k+1}}{(2k+1)!} | \partial_{t_{1}}^{\eta_{1}} m_{k}(t_{1})|.
\end{align*}
By~\eqref{eq:supmkappa1} and~\eqref{eq:supm'kappa1}, there exist constants $C>0$ and $B>0$ which depend only on $D$ such that
\begin{equation*}
   \left ( \prod_{j=1}^{2} \partial_{t_{j}}^{\eta_{j}} \right ) (\Fcal[\tilde{T}_0]-\Fcal[\tilde{T}_1])(t) \leq C \exp(-B \gamma^{-\delta}) \sup_{|t_{2}|\leq c} \Bigg( \gamma \cosh(| t_{2}| \gamma ) + \sinh(|t_{2}| \gamma)  \Bigg).
\end{equation*}
For $\gamma$ small enough, there exists a constant $C>0$ which depends only on $D$ such that
\begin{equation*}
\left | \left ( \prod_{j=1}^{2} \partial_{t_{j}}^{\eta_{j}} \right ) (\Fcal[\tilde{T}_0]-\Fcal[\tilde{T}_1])(t) \right | \leq  C \exp(-B \gamma^{-\delta}).
\end{equation*}
Finally, using~\eqref{eq:lowerbound1}, there exists a constant $C>0$ which depends only on $D$ such that
\begin{equation*}
\int_{\Rbb^D} \left| d(G_{0}(\kappa) \ast Q)(x)-d(G_1(\kappa) \ast Q)(x)\right|  \leq C \exp(-B \gamma^{-\delta}).
\end{equation*}
Taking $\gamma = c_{\gamma}(\log n)^{-\frac{1}{\delta}}$ with $c_{\gamma} \leq B^{\frac{1}{\delta+1}}$ shows that there exists $C>0$ such that
\begin{equation*}
\int_{\Rbb^D} \left| d(G_{0}(\kappa) \ast Q)(x)-d(G_1(\kappa) \ast Q)(x)\right|  \leq \frac{C}{n}.
\end{equation*}

\subsection{Proof of Theorem~\ref{theorem:upperboundwasserstein}}
\label{proof:theorem:upperboundwasserstein}

We shall need two technical lemmas. The following one is easily proved following the arguments at the end of the proof of Theorem~\ref{theorem:rateH}.

\begin{lemma}
\label{Lemma:encadrementMcal}
Let $G$ be a probability measure with compact support $\Mcal_{G}$. Assume $G \in St_{\Mcal_{G}}(a,d,r_0)$ for some constants $a>0$, $d>0$ and $r_0 >0$.
Recall that $\Gamma_n := \Gamma_{n,1} = \|\hat{g}_n - \bar{g}\|_\infty$. Then

\begin{itemize}
    \item[(1)] For any $C_1 > 0$ and $c > 0$, there exists $h_0 > 0$ such that if $h_n \leq h_0$, on the event where
    \begin{equation*}
    C_1 + \Gamma_n < \lambda_n < \ a c_{A}^{d}d_{A} (\frac{1}{h_n})^{D-d} - \Gamma_n,
    \end{equation*}
    it holds
    \begin{equation*}
        \Mcal_{G} \subset \widehat{\Mcal} \subset (\Mcal_{G})_{c}.
    \end{equation*}

    \item[(2)] For $m_n$, $h_n$, $\lambda_n$ chosen as in Theorem~\ref{theorem:rateH},
        for all $C_1 \in (0,a c_{A}^{d}d_{A})$ and $\delta' > 0$, there exist $C>0$ and $n_0 \geq 0$ such that for all $n \geq n_0$, with probability at least $1 - 2 \exp(-n^{1/2 - \delta'})$,
        \begin{equation*}
            \Gamma_n^2 \leq C e^{-m_n}
                \qquad \text{and} \qquad
                C_1 + \Gamma_n < \lambda_n < \ a c_{A}^{d}d_{A} (\frac{1}{h_n})^{D-d} - \Gamma_n.
        \end{equation*}
        and in particular, for all $c > 0$ and $\delta' > 0$, there exists $n_0' \geq 0$ such that for all $n \geq n_0'$, with probability at least $1 - 2 \exp(-n^{1/2 - \delta'})$,
        \begin{equation*}
            \Mcal_{G} \subset \widehat{\Mcal} \subset (\Mcal_{G})_{c}.
        \end{equation*}
        In particular, since $R_n \longrightarrow +\infty$ and $\Mcal_{G}$ is compact, up to increasing $n_0$, on this event,
        \begin{equation*}
            \Mcal_{G} \subset \widehat{\Mcal} \cap \bar{B}(0,R_n) \subset (\Mcal_{G})_{c}.
        \end{equation*}
\end{itemize}
\end{lemma}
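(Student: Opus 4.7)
}

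Part (1) is a purely deterministic consequence of Lemmas~\ref{lem_minoration_gbar} and~\ref{lemma:sup}. For $\Mcal_G \subset \widehat\Mcal$: any $y \in \Mcal_G$ satisfies $\bar g(y) \geq a c_A^d d_A (1/h_n)^{D-d}$ by Lemma~\ref{lem_minoration_gbar}, so
\begin{equation*}
\widehat g_n(y) \geq \bar g(y) - \Gamma_n \geq a c_A^d d_A (1/h_n)^{D-d} - \Gamma_n > \lambda_n,
\end{equation*}
giving $y \in \widehat\Mcal$. For $\widehat\Mcal \subset (\Mcal_G)_c$: choose $h_0 > 0$ small enough that $h_0 [D \log(1/h_0)/\beta_A]^{(A+1)/A} < c$, which is possible since the left-hand side vanishes as $h_0 \to 0$. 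Then for any $h_n \leq h_0$ and any $y$ with $d(y,\Mcal_G) > c$, Lemma~\ref{lemma:sup} gives $\bar g(y) \leq C_1$, whence
\begin{equation*}
\widehat g_n(y) \leq \bar g(y) + \Gamma_n \leq C_1 + \Gamma_n < \lambda_n,
\end{equation*}
and $y \notin \widehat\Mcal$.

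Part (2) combines three ingredients. First, I would apply Proposition~\ref{prop:phihat} with $\kappa = 1$ and deviation level $x = n^{1/2 - \delta'}$, choosing the constants $\delta, \delta''$ small enough that $(1-\delta)(1-\delta'') > 1/2$; this yields, on an event of probability at least $1 - 2 e^{-n^{1/2-\delta'}}$, a bound of the form $\|T_m \widehat\Phi_{n,1} - \Phi_X\|_{2,\nu}^2 \leq c\, n^{-1/2 - \eta}$ for some $\eta > 0$. Second, I would invoke the chain of inequalities \eqref{eq:gammasquare}--\eqref{eq_majoration_Gamma_generale} from the proof of Theorem~\ref{theorem:rateH} (specialised to $\kappa = 1$), which converts the above into
\begin{equation*}
\Gamma_n^2 \leq C e^{-m_n} \Bigl[\,1 \vee n^{1/2}\, \|T_m \widehat\Phi_{n,1} - \Phi_X\|_{2,\nu}^2\,\Bigr] \leq C' e^{-m_n}
\end{equation*}
on the same event. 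Third, I would verify that $\lambda_n$ sandwiches correctly for $n$ large. If $d < D$, then $\lambda_n = (1/h_n)^\ell$ with $\ell < 1 \leq D - d$, so $\lambda_n \to \infty$ dominates $C_1 + \Gamma_n$, while $\lambda_n / [a c_A^d d_A (1/h_n)^{D-d}] \to 0$ gives the upper inequality. If $d = D$, then $\lambda_n = \tfrac14 a c_A^D d_A$ is a fixed constant and the sandwich holds as soon as $C_1 + \Gamma_n < \lambda_n < a c_A^D d_A - \Gamma_n$, which is ensured by the hypothesis $C_1 < \lambda_n$ and $\Gamma_n \to 0$. Applying Part (1) then gives the inclusions $\Mcal_G \subset \widehat\Mcal \subset (\Mcal_G)_c$. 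The last claim follows since $\Mcal_G$ is compact and $R_n \to \infty$, so for $n$ large $\Mcal_G \subset \bar B(0, R_n)$ and consequently $\Mcal_G = \Mcal_G \cap \bar B(0, R_n) \subset \widehat\Mcal \cap \bar B(0, R_n) \subset (\Mcal_G)_c$.

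The only real bookkeeping obstacle is confirming that the deviation bound of Proposition~\ref{prop:phihat} remains usable at the higher level $x = n^{1/2 - \delta'}$ rather than $x = \log n$ (as used in the proof of Theorem~\ref{theorem:rateH}); this is explicitly noted to be permitted in that proof, and only requires choosing $\delta, \delta''$ small enough, so no new argument is needed.
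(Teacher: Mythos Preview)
Your proposal is correct and follows exactly the approach the paper indicates: the lemma is stated to be ``easily proved following the arguments at the end of the proof of Theorem~\ref{theorem:rateH}'', and you have reproduced precisely those arguments --- Lemmas~\ref{lem_minoration_gbar} and~\ref{lemma:sup} for the deterministic inclusions, the chain \eqref{eq:gammasquare}--\eqref{eq_majoration_Gamma_generale} together with Proposition~\ref{prop:phihat} at level $x = n^{1/2-\delta'}$ for the probabilistic control of $\Gamma_n$, and the case split on $d<D$ versus $d=D$ for the sandwich on $\lambda_n$.

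One small caveat in the case $d=D$: you write that the sandwich ``is ensured by the hypothesis $C_1 < \lambda_n$'', but the stated hypothesis is only $C_1 < a c_A^D d_A = 4\lambda_n$, which does not by itself force $C_1 < \lambda_n = \tfrac14 a c_A^D d_A$. This appears to be a slight looseness in the lemma's stated range for $C_1$ rather than a defect in your argument; the paper's own use of the lemma (in the proof of Theorem~\ref{theorem:upperboundwasserstein}) only needs the final inclusion $\Mcal_G \subset \widehat\Mcal \subset (\Mcal_G)_c$, for which any fixed $C_1 < \lambda_n$ suffices.
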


In the rest of the proof of the Theorem, we lighten the notation $\widehat{\Mcal} \cap \bar{B}(0,R_n)$ into $\widehat{\Mcal}$ (equivalently, we redefine the estimator $\hat{\Mcal}$ as the intersection of the estimator of Section~\ref{subsec:Mupper} with the closed euclidean ball of radius $R_n$).

\begin{lemma}
\label{lemma:bargfaraway}
Let $G$ be a probability measure with compact support $\Mcal_{G}$. Assume $G \in St_{\Mcal_{G}}(a,d,r_0)$ for some constants $a>0$, $d>0$ and $r_0 >0$.
Then for any $\alpha > 0$, $c > 0$ and $p \in [1,+\infty)$, there exists $C(\alpha, c) > 0$ such that, on the event where
\begin{equation*}
    \Mcal_{G} \subset \widehat{\Mcal} \subset (\Mcal_{G})_{c},
\end{equation*}
it holds
\begin{equation*}
\| \bar{g} \|_{L_1{(\Rbb^D \setminus (\widehat{\Mcal})_{c})}}
	\leq C(\alpha,c) h_n^{\alpha}
	    \quad \text{and} \quad
	\int_{\Rbb^D \setminus (\widehat{\Mcal})_{c}} \|x\|^p |\bar{g}(x)| dx \leq  C(\alpha,c) h_n^\alpha.
\end{equation*}
\end{lemma}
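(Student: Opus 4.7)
The plan is to reduce to an integral over $\{y : d(y,\Mcal_G) \geq c\}$ and then use the super-polynomial tail decay of $\psi_A$ provided by property (V). Indeed, on the event considered, $\Mcal_G \subset \widehat{\Mcal}$ implies $(\Mcal_G)_c \subset (\widehat{\Mcal})_c$, hence $\Rbb^D \setminus (\widehat{\Mcal})_c \subset \{y \in \Rbb^D : d(y,\Mcal_G) \geq c\}$, and since $\bar g$ is a convolution of two probability measures it is nonnegative, so the absolute values may be dropped. Using that $\Mcal_G$ is compact, fix $R > 0$ with $\Mcal_G \subset \bar B(0,R)$.

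Property (V) of $\psi_A$ yields constants $C_A, T_A > 0$ such that $\psi_A(t) \leq C_A \exp\!\bigl(-\beta_A \|t\|^{A/(A+1)}\bigr)$ for $\|t\|_2 \geq T_A$. I would split the domain of integration into
\begin{equation*}
\Zcal_1 = \{y : d(y,\Mcal_G) \geq c,\ \|y\|_2 \leq 2R\}
\quad\text{and}\quad
\Zcal_2 = \{y : \|y\|_2 \geq 2R\}.
\end{equation*}
On $\Zcal_1$, for every $u \in \Mcal_G$ we have $\|y-u\|_2/h_n \geq c/h_n \geq T_A$ for $n$ large, so from $\bar g(y) = h_n^{-D} \int \psi_A(\|y-u\|_2/h_n)\, dG(u)$ we obtain the uniform bound $\bar g(y) \leq C_A h_n^{-D} \exp(-\beta_A (c/h_n)^{A/(A+1)})$. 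Integrating on the bounded set $\Zcal_1$ yields a quantity which decays faster than any polynomial in $h_n$, so in particular it is $O(h_n^\alpha)$.

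On $\Zcal_2$, for any $u \in \Mcal_G$ the triangle inequality gives $\|y-u\|_2 \geq \|y\|_2/2$, hence
\begin{equation*}
\bar g(y) \leq C_A h_n^{-D} \exp\!\bigl(-\beta_A (\|y\|_2/(2h_n))^{A/(A+1)}\bigr).
\end{equation*}
The change of variable $z = y/h_n$ then shows that $\int_{\Zcal_2} \bar g(y)\, dy$ is bounded by $\int_{\|z\|_2 \geq 2R/h_n} C_A \exp(-\beta_A (\|z\|_2/2)^{A/(A+1)})\, dz$, which again decays faster than any polynomial in $h_n$. This handles the first estimate.

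For the second estimate, the same splitting is used, but the integrand is multiplied by $\|y\|_2^2$. On $\Zcal_1$ this factor is bounded by $(2R)^2$ and the previous bound applies verbatim. On $\Zcal_2$ the extra polynomial factor is absorbed by the stretched-exponential decay above (since $s^2 e^{-\beta_A (s/(2h_n))^{A/(A+1)}}$ integrates to an $O(h_n^\alpha)$ quantity for any $\alpha$). Choosing $C(\alpha,c)$ appropriately in terms of $\alpha$, $c$, $R$, $A$, $\beta_A$, $C_A$ concludes the proof. The argument is essentially a direct computation; the only point requiring slight care is to isolate the two regimes $\Zcal_1, \Zcal_2$ so that the bound on $\|y-u\|_2$ from below is meaningful in each — no serious obstacle is expected.
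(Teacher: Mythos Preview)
Your argument is correct and follows the same core idea as the paper: use $\Mcal_G \subset \widehat{\Mcal}$ to reduce to $\{y : d(y,\Mcal_G) \geq c\}$, then exploit the stretched-exponential tail (property~(V)) of $\psi_A$. The paper avoids your near/far split $\Zcal_1 \cup \Zcal_2$ by observing directly that $\exp\!\bigl(-\beta_A (s/h_n)^{A/(A+1)}\bigr) \leq \tilde C\, (h_n/s)^{D+\alpha}$ uniformly in $s>0$, which reduces everything to showing $\int_{\Rbb^D \setminus (\Mcal_G)_c} d(y,\Mcal_G)^{-(D+\alpha)}\,dy < \infty$; this is a slightly slicker bookkeeping device but not a different argument in substance.
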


\begin{proof}
By definition,
\begin{equation*}
\| \bar{g} \|_{L_1(\Rbb^D \setminus (\widehat{\Mcal})_{c})}
    = \frac{1}{h_n^D} \int_{x \in \Mcal_{G}} \int_{y \in \Rbb^D \setminus (\widehat{\Mcal})_{c}} \psi_A\left( \frac{\|y - x\|_2}{h_n}\right) dy dG(x).
\end{equation*}
By~\eqref{lim}, for any $A>0$, there exists $C>0$ such that for any $x \in \Mcal_{G}$ and $y \in \Rbb^D \setminus (\widehat{\Mcal})_{c}$, 
\begin{align*}
\psi_{A}\left( \frac{\|y - x\|_2}{h_n}\right)
    \leq C \exp\left(- \beta_{A} \frac{\|y-x\|_2^{A/(A+1)}}{h_n^{A/(A+1)}}\right)
    &\leq C \exp\left(- \beta_{A} \frac{ d(y,\Mcal_{G})^{A/(A+1)}}{h_n^{A/(A+1)}}\right).
\end{align*}
Since $\Mcal_{G} \subset \widehat{\Mcal}$, for all $y \in \Rbb^D \setminus (\widehat{\Mcal})_{c}$, $d(y,\Mcal_{G}) \geq c$, so for any $\alpha > 0$, there exists a constant $\tilde{C}>0$ such that
\begin{equation*}
C \exp\left(- \beta_{A} \frac{ d(y,\Mcal_{G})^{A/(A+1)}}{h_n^{A/(A+1)}}\right)
	\leq \tilde{C} \frac{h_n^{D + \alpha}}{d(y,\Mcal_{G})^{D + \alpha}}.
\end{equation*}
Moreover, since $\Mcal_{G}$ is compact, $\Diam(\Mcal_{G})$ is finite, so that on the event where $\Mcal_G \subset \widehat{\Mcal}$,
\begin{align*}
\int_{\Rbb^D \setminus (\widehat{\Mcal})_{c}} \frac{1}{d(y,\Mcal_{G})^{D+\alpha}} dy 
	&\leq \int_{\Rbb^D \setminus (\Mcal_{G})_{c}} \frac{1}{d(y,\Mcal_{G})^{D+\alpha}} dy
	< \infty. \\
	& \leq \int_{\Rbb^D} \left(\frac{1}{c \vee (\|y\| - \Diam(\Mcal_{G})/2)}\right)^{D+\alpha} dy < \infty.
\end{align*}
Therefore, for all $c > 0$ and $\alpha > 0$, there exists $C$ depending on $A$, $D$, $c$, $\alpha$ and $\Diam(\Mcal_G)$ such that
\begin{equation*}
\| \bar{g} \|_{L_1(\Rbb^D \setminus (\widehat{\Mcal})_{c})}
    \leq C h_n^{\alpha}.
\end{equation*}
The proof that the same holds for $\int_{\Rbb^D \setminus (\widehat{\Mcal})_{c}} \|x\|^p \bar{g}(x) dx $ is similar.
\end{proof}

Let $G \in St_{\Mcal_{G}}(a,d,r_0)$ be such that if $X \sim G$, then $\Phi_X \in \Hcal \cap \Upsilon_{1,S}$. Fix $p \in [1, +\infty)$. We use a bias-variance decomposition of $W_p(G,\widehat{P}_{n, \eta})$ through the triangle inequality
\begin{equation*}
    W_p(G, \widehat{P}_{n,\eta}) \leq W_p(G,P_{\psi_{A,h}}) + W_p(P_{\psi_{A,h}},\widehat{P}_{n,\eta}).
\end{equation*}
The proof is done is several steps :

\begin{enumerate}
    \item[\textbf{(1)}] We first show that there exists $C>0$ depending only on $A$ and $D$ such that the bias satisfies
\begin{equation*}
    W_p(G, P_{\psi_{A,h_n}}) \leq C h_n.
\end{equation*}
    \item[\textbf{(2)}] We prove that for any $\alpha \geq 1$, on the event where
\begin{equation*}
    \Mcal_{G} \subset \widehat{\Mcal} \subset (\Mcal_{G})_{c},
\end{equation*}
there exists $C' > 0$ such that 
\begin{equation*}
\label{lemma:varw}
W_p(P_{\psi_{A,h_n}},\widehat{P}_{n,\eta}) 
	\leq C' (h_n^{\alpha} + \Gamma_n).
\end{equation*}
    \item[\textbf{(3)}] We show that the choice of the parameters $m_n$, $h_n$ and $\lambda_n$ gives the result.
\end{enumerate}

\paragraph*{Proof of (1)}

Let $Y_\psi$ be a random variable with density $\psi_{A,h_n}$ and independent of $X$, so that the distribution of $X+Y_{\psi}$ is $P_{\psi_{A,h_n}}$. By definition of $W_p$,
\begin{equation*}
W_p^p(G, P_{\psi_{A,h_n}})
	\leq \Ebb(\| X + Y_{\psi} - X \|^p_2)
	= \Ebb(\|Y_{\psi}\|^p_2) = h_{n}^p \int_{\Rbb^D} \|u\|^p \psi_{A,1}(u) du,
\end{equation*}
and the integral is finite by point (V) of the properties of $\psi_{A}$.

\paragraph*{Proof of (2)}

If $\nu$ and $\mu$ are probability measures on $\Rbb^D$ having respective densities $f$ and $g$ with respect to the Lebesgue measure, letting $\omega$ be the measure with density $\min(f,g)$ with respect to the Lebesgue measure, $a \in \Rbb^D$, and $\delta_a$ the Dirac measure in $a$, by convexity of $x \mapsto x^p$,
\begin{align*}
W_p^p(\mu,\nu)
    &\leq 2^{p-1} \left( W_p^p(\mu, \omega + (1-\omega(\Rbb^D)) \delta_a) + W_p^p(\nu, \omega + (1-\omega(\Rbb^D)) \delta_a) \right) \\
    &\leq 2^{p-1} \int_{\Rbb^D} \| x-a \|^p (f(x) - g(x) - 2\min(f(x),g(x))) dx \\
    &= 2^{p-1} \int_{\Rbb^D} \| x-a \|^p |f(x) - g(x)| dx,
\end{align*}
so that
\begin{equation}
\label{ineq:Villani}
    W_p^p(\mu,\nu) \leq 2^{p-1} \min_{a \in \Rbb^D} \int_{\Rbb^D} \| x-a \|^p |f(x) - g(x)| dx.
\end{equation}
This entails
\begin{align}
W_p^p(P_{\psi_{A,h_n}},\widehat{P}_{n,\eta}) 
    &\leq 2^{p-1} \min_{a \in \Rbb^D} \int_{\Rbb^D} \| x-a \|^p |\bar{g}(x) - c_n \widehat{g}^{+}_n(x) 1|_{(\widehat{\Mcal})_{\eta}})(x)| dx \nonumber \\
    \label{eq:varweq}
    &\leq 2^{p-1} \int_{(\widehat{\Mcal})_{\eta}} \|x\|^p  |\bar{g}(x) - c_n \widehat{g}^{+}_n(x)| dx
        + 2^{p-1} \int_{\Rbb^D \setminus (\widehat{\Mcal})_{\eta}} \|x\|^p \bar{g}(x) dx.
\end{align}
For $S$ compact subset of $\Rbb^D$, write $M_{S} = \sup_{x \in S} \|x\|^p$ and $\text{Vol}(S)$ for the Lebesgue measure of $S$, then
\begin{align*}
\int_{(\widehat{\Mcal})_{\eta}} \|x\|^p |\bar{g}(x) - c_n \widehat{g}^+_n(x)| dx 
    &\leq M_{(\widehat{\Mcal})_{\eta}} \int_{(\widehat{\Mcal})_{\eta}} |\widehat{g}^{+}_n(x) - \bar{g}(x)|dx
        + M_{(\widehat{\Mcal})_{\eta}} \frac{|c_n-1|}{c_n}\\
    & \leq M_{(\widehat{\Mcal})_{\eta}} \text{Vol}((\widehat{\Mcal})_{\eta}) \Gamma_n
        + M_{(\widehat{\Mcal})_{\eta}} \frac{|c_n-1|}{c_n}.
\end{align*}
We also have 
\begin{align*}
\frac{|c_n -1|}{c_n} = \left | \frac{1}{c_n} - 1 \right |
    &= \Bigg| \int_{(\widehat{\Mcal})_{\eta}} (\widehat{g}^{+}_n(y) - \bar{g}(y)) dy - \int_{\Rbb^D \setminus (\widehat{\Mcal})_{\eta}} \bar{g}(y) dy \Bigg| \\
    &\leq \| \widehat{g}_n - \bar{g} \|_{L_1((\widehat{\Mcal})_{\eta})} + \|\bar{g}\|_{L_1(\Rbb^D \setminus (\widehat{\Mcal})_{\eta})}.
\end{align*}
Using Hölder's inequality,
\begin{equation*}
\| \widehat{g}_n - \bar{g} \|_{L_1((\widehat{\Mcal})_{\eta})} \leq \text{Vol}((\widehat{\Mcal})_{\eta}) \Gamma_n.
\end{equation*}
By Lemma~\ref{lemma:bargfaraway}, for any $\alpha > 0$, there exists $C$ such that
\begin{equation*}
\int_{(\widehat{\Mcal})_{\eta}} \|x\|^p |\bar{g}(x) - c_n \widehat{g}^+_n(x)| dx 
	\leq 2 M_{(\widehat{\Mcal})_{\eta}} \text{Vol}((\widehat{\Mcal})_{\eta}) \Gamma_n 
		+ M_{(\widehat{\Mcal})_{\eta}} C h_n^{\alpha}.
\end{equation*}
For any $c>0$, when $\widehat{\Mcal} \subset (\Mcal_{G})_{c}$, one has $(\widehat{\Mcal})_{\eta} \subset (\Mcal_{G})_{\eta + c}$.
This inclusion entails $M_{(\widehat{\Mcal})_{\eta}} \leq M_{(\Mcal_{G})_{\eta + c}}$ and $\text{Vol}((\widehat{\Mcal})_{\eta}) \leq \text{Vol}((\Mcal_{G})_{\eta + c})$. Therefore, for any $c>0$,
\begin{equation}
\label{eq:proofvarw1}
\int_{(\widehat{\Mcal})_{\eta}} \|x\|^p |\bar{g}(x) - c_n \widehat{g}^+_n(x)| dx
	\leq 2 M_{(\Mcal_{G})_{\eta + c}} \text{Vol}((\Mcal_{G})_{\eta + c}) \Gamma_n
		+ M_{(\Mcal_{G})_{\eta + c}} C h_n^{\alpha}.
\end{equation}
Again by Lemma~\ref{lemma:bargfaraway}, on the event where $\Mcal_{G} \subset \widehat{\Mcal} \subset (\Mcal_{G})_{\eta}$, 
\begin{equation}
\label{eq:proofvarw2}
\int_{\Rbb^D \setminus (\widehat{\Mcal})_{\eta}} \|x\|^p |\bar{g}(x)| dx \leq C' h_n^\alpha.
\end{equation}
Finally, using \eqref{eq:varweq}, \eqref{eq:proofvarw1} and \eqref{eq:proofvarw2}, for any $\alpha \geq 1$, there exists $C>0$ such that
\begin{equation*}
W_p^p(P_{\psi_{A,h_n}},\widehat{P}_{n,\eta})
	\leq C (h_n^{\alpha} + \Gamma_n).
\end{equation*}

\paragraph*{Proof of (3)}

Using \textbf{(1)} and \textbf{(2)}, for sequences $h_n$, $m_n$ and $\lambda_n$ satisfying the assumptions of Theorem~\ref{theorem:upperboundwasserstein}, on the event where $\Mcal_{G} \subset \widehat{\Mcal} \subset (\Mcal_{G})_{\eta}$, for any $\alpha \geq p$, there exists $C > 0$ such that
\begin{equation*}
W_p(G, \widehat{P}_{n,\eta})
	\leq C (h_n + (h_n^{\alpha} + \Gamma_n)^{1/p})
	\leq 2C (h_n + \Gamma_n^{1/p}).
\end{equation*}
We may assume $h_n \leq 1$ for all $n$ without loss of generality.
As stated in Lemma~\ref{Lemma:encadrementMcal}, for any $\delta' > 0$, there exist $C'$ and $n_0$ 
such that for all $n\geq n_0$, with probability at least $1 - 2\exp(-n^{1/2 - \delta'})$, $\Gamma_n^{1/p} \leq C' e^{-m_n/(2p)}$ and $\Mcal_{G} \subset \widehat{\Mcal} \subset (\Mcal_{G})_{\eta}$, and therefore
\begin{align*}
W_p(G, \widehat{P}_{n,\eta^{\star}})
    &\leq C m_n^{-1}
\end{align*}
on this event, up to changing the constant $C$.

On the event of probability at most $2\exp(-n^{1/2-\delta'})$ where this does not hold, since the support of $\widehat{P}_{n,\eta^{\star}}$ is a subset of $\bar{B}(0,R_n)$, $W_p(G, \widehat{P}_{n,\eta^{\star}}) \leq 2R_n$.

Therefore, taking $\delta' < \delta$ where $\delta$ is as defined in the statement of the Theorem, there exists $C>0$ 
such that for $n\geq n_0$,
\begin{equation*}
    \Ebb_{(G * \Qbb)^{\otimes n}}[W_p(G,\widehat{P}_{n,\eta^{\star}})] \leq C m_n^{-1},
\end{equation*}
which concludes the proof.

\subsection{Proof of Theorem~\ref{theorem:lowerboundwasserstein}}
\label{proof:lowerboundW}

Let $\widehat{P}_n$ be an estimator of $G$. According to~\cite{MR1462963},
\begin{equation*}
\underset{\Qbb \in \Qcal^{(D)} (\nu,c({\nu}),E)}
    {\sup_{G \in St_{\Kcal}(a,d,r_0)\cap \Lcal(1,S,{\mathcal{H}_1^{\star}})}}
    \Ebb_{(G * \Qbb)^{\otimes n}}[W_p(G,\widehat{P}_n)] \geq  \frac{1}{2} W_p(G_0(1), G_1(1))  (1 - \| G_0(1) \ast Q) - G_1(1) \ast Q \|_1)^{n}.
\end{equation*}
Using the same two distributions $G_0(1)$, $G1(1)$ and the same set $\mathcal{H}_1^{\star}$ as in Theorem~\ref{thm:lower}. We have shown in Theorem~\ref{thm:lower} that there exists a constant $C>0$ such that
\begin{equation*}
    \| G_0(1) \ast Q - G_1(1) \ast Q \|_{TV} \leq \frac{C}{n},
\end{equation*}
taking $\gamma$ of the form $c \log(n)^{-1-\delta}$ for any $\delta > 0$ and $c$ small enough, which implies that the minimax risk is lower bounded by $W_p(G_0(1), G_1(1))$.
We show that there exist constants $c>0$ and $n_0>0$ such that for $n \geq n_0$
\begin{equation*}
    W_p(G_0(1),G_1(1)) \geq c \gamma.
\end{equation*}
Let $\mathcal{U}_\gamma$ be the set of $u \in \Rbb$ such that $|\cos(\frac{u}{\gamma})| \geq 1/2$, that is $\mathcal{U}_\gamma = \bigcup_{k \in \Zbb} [k\pi \gamma - \frac{\pi \gamma}{3}, k\pi \gamma + \frac{\pi \gamma}{3}]$. For each $k \in \Zbb$, let $I_{k,\gamma} := [k\pi \gamma - \frac{\pi \gamma}{2}, k\pi \gamma + \frac{\pi \gamma}{2}]$.
Let us also define, for any two sets $A$ and $B$ of $\Rbb^d$, $d(A,B) = \inf_{x \in A, y \in B} \|x-y\|_2$.
We first show that
\begin{equation*}
d(M_0(\gamma) \cap (\mathcal{U}_\gamma \times \Rbb^{D-1}), M_1(\gamma)) \geq \gamma (\frac{1}{4 \sqrt{2}} \wedge \frac{\pi}{6}).
\end{equation*}
Let $x \in M_0(\gamma) \cap (\mathcal{U}_\gamma \times \Rbb^{D-1})$ and $y \in M_1(\gamma)$. There exists $k \in \Zbb$ such that $x \in M_0(\gamma) \cap ((\mathcal{U}_\gamma \cap I_{k,\gamma}) \times \Rbb^{D-1})$. If $y \in I_{k,\gamma} \times \Rbb^{D-1}$ (that is, if the first coordinate of $x$ and $y$ are in the same interval $I_{k,\gamma}$), then
\begin{equation*}
    \|x-y\|_2 \geq d(M_0(\gamma) \cap ((\mathcal{U}_\gamma \cap I_{k,\gamma}) \times \Rbb^{D-1}),M_1(\gamma) \cap (I_{k,\gamma} \times \Rbb^{D-1})).
\end{equation*}
All points of $M_0(\gamma)$ are of the form $(u, u + \frac{1}{2}\gamma \cos(\frac{u}{\gamma}), 0, \dots 0)^\top$ and the distance between $(u, u + \frac{1}{2}\gamma \cos(\frac{u}{\gamma}), 0, \dots, 0)^\top$ and the diagonal defined by $\mathcal{D} := \{(u, u,0,\dots,0)^\top : u \in \Rbb\}$ is $\frac{1}{4\sqrt{2}} \gamma |\cos(\frac{u}{\gamma})|$.
Since the sets $M_0(\gamma) \cap ((\mathcal{U}_\gamma \cap I_{k,\gamma}) \times \Rbb^{D-1})$ and $M_1(\gamma) \cap (I_{k,\gamma} \times \Rbb^{D-1})$ are on opposite sides of the diagonal $\mathcal{D}$,
\begin{align*}
d(M_0(\gamma) \cap ((\mathcal{U}_\gamma \cap I_{k,\gamma}) \times \Rbb^{D-1}),M_1(\gamma) \cap (I_{k,\gamma} \times \Rbb^{D-1}))
    &\geq d(M_0(\gamma) \cap ((\mathcal{U}_\gamma \cap I_{k,\gamma}) \times \Rbb^{D-1}), \mathcal{D})\\
    &= \frac{1}{4 \sqrt{2}} \gamma, 
\end{align*}
so that $\|x - y\|_2 \geq \frac{1}{4 \sqrt{2}} \gamma$.
If now $y \notin I_{k,\gamma} \times \Rbb^{D-1}$,
\begin{align*}
d(M_0(\gamma) \cap ((\mathcal{U}_\gamma \cap I_{k,\gamma}) \times \Rbb^{D-1}), M_1(\gamma) \cap ((\Rbb \setminus I_{k,\gamma}) \times \Rbb^{D-1}))
    &\geq d(\mathcal{U}_\gamma \cap I_{k,\gamma}, \Rbb \setminus I_{k,\gamma}) \\
    &= \frac{\pi \gamma}{6},
\end{align*} 
so that $\|x-y\|_2 \geq \frac{\pi \gamma}{6}$, and thus $d(M_0(\gamma) \cap (\mathcal{U}_\gamma \times \Rbb^{D-1}), M_1(\gamma)) \geq \gamma (\frac{1}{4 \sqrt{2}} \wedge \frac{\pi}{6})$.

\bigskip
Now, let us show that $W_p(G_0(1),G_1(1)) \geq \gamma (\frac{1}{8 \sqrt{2}} \wedge \frac{\pi}{12})$.
Let $\pi$ be a transport plan between $G_0(1)$ and $G_1(1)$, then
\begin{align*}
\int_{M_0(\gamma) \times M_1(\gamma)} \|x-y\|_2^p d\pi(x,y)
    &\geq \int_{M_0(\gamma) \cap (\mathcal{U}_\gamma \times \Rbb^{D-1}) \times M_1(\gamma)} \|x-y\|_2^p d\pi(x,y) \\
    &\geq d(M_0(\gamma) \cap (\mathcal{U}_\gamma \times \Rbb^{D-1}), M_1(\gamma))^p \  \pi(M_0(\gamma) \cap (\mathcal{U}_\gamma \times \Rbb^{D-1}) \times M_1(\gamma)) \\
    &= d(M_0(\gamma) \cap (\mathcal{U}_\gamma \times \Rbb^{D-1}), M_1(\gamma))^p \  G_0(1)(M_0(\gamma) \cap (\mathcal{U}_\gamma \times \Rbb^{D-1})) \\
    &= d(M_0(\gamma) \cap (\mathcal{U}_\gamma \times \Rbb^{D-1}), M_1(\gamma))^p \  \Pbb[U(1) \in \mathcal{U}_{\gamma}]
\end{align*}
since $G_1(1)$ has support $M_1(\gamma)$ and by definition of $G_0(1)$.
Therefore, by taking the infimum on all transport plans between $G_0(1)$ and $G_1(1)$,
\begin{equation*}
W_p(G_0(1),G_1(1)) \geq \gamma (\frac{1}{4 \sqrt{2}} \wedge \frac{\pi}{6}) \Pbb[U(1) \in \mathcal{U}_{\gamma}]^{1/p}.
\end{equation*}
$U(1)$ admits a density $f_1$ with respect to Lebesgue measure that is supported on $[-1,1]$ and continuous. Let us write $\omega$ one of its modulus of continuity. We have
\begin{align*}
\Pbb[U(1) \in \mathcal{U}_{\gamma}]
    &= \int_{[-1,1]} f_1(x) 1|_{\mathcal{U}_\gamma}(x) dx \\
    &= \sum_{k \in [-1/(\pi\gamma), 1/(\pi\gamma)]} \int_{[k\pi \gamma - \frac{\pi \gamma}{3}, k\pi \gamma + \frac{\pi \gamma}{3}]} f_1(x) dx \\
    &\leq \sum_{k \in [-1/(\pi\gamma), 1/(\pi\gamma)]} \left( \int_{[k\pi \gamma - \frac{\pi \gamma}{3}, k\pi \gamma + \frac{\pi \gamma}{3}]} f_1(k\pi \gamma) dx + \frac{2\pi\gamma}{3} \omega(\pi\gamma/3) \right) \\
    &\leq \sum_{k \in [-1/(\pi\gamma), 1/(\pi\gamma)]} \frac{2}{3} \int_{[k\pi \gamma - \frac{\pi \gamma}{2}, k\pi \gamma + \frac{\pi \gamma}{2}]} f_1(k\pi \gamma) dx + \frac{3}{\pi \gamma} \frac{2\pi\gamma}{3} \omega(\pi\gamma/3) \\
    &\leq \sum_{k \in [-1/(\pi\gamma), 1/(\pi\gamma)]} \frac{2}{3} \int_{[k\pi \gamma - \frac{\pi \gamma}{2}, k\pi \gamma + \frac{\pi \gamma}{2}]} f_1(x) dx + \frac{3}{\pi \gamma} \left(\frac{2\pi\gamma}{3} \omega(\pi\gamma/3) + \pi \gamma \omega(\pi\gamma/2)\right) \\
    &\leq \frac{2}{3} \int_{[-1,1]} f_1(x) dx + 3 \left( \frac{2}{3} \omega(\pi\gamma/3) + \omega(\pi\gamma/2) \right) \\
    &\underset{\gamma \rightarrow 0}{\longrightarrow} \frac{2}{3} \int_{[-1,1]} f_1(x) dx = \frac{2}{3}.
\end{align*}
Therefore, there exists $n_0$ such that for all $n \geq n_0$, $W_p(G_0(1),G_1(1)) \geq \gamma (\frac{1}{8 \sqrt{2}} \wedge \frac{\pi}{12})$.



\printbibliography 

\end{document}